\providecommand{\tabularnewline}{\\}
\numberwithin{equation}{section}
\numberwithin{figure}{section}
\theoremstyle{plain}
\newtheorem{thm}{\protect\theoremname}
  \theoremstyle{remark}
  \newtheorem{claim}[thm]{\protect\claimname}
\theoremstyle{definition}
\newtheorem{lemma}{Lemma}[section]
\newtheorem{theorem}[lemma]{Theorem}\newtheorem{proposition}[lemma]{Proposition}\newtheorem{definition}[lemma]{Definition}\newtheorem{remark}[lemma]{Remark}\usepackage{times}
\title{Uniqueness of VOA structure of $3C$-algebra and $5A$-algebra}
\author{Chongying Dong\footnote{supported by the Simons foundation  634104}}
\affil{Department of Mathematics, University of
California, Santa Cruz, CA 95064 USA}
\author{Wen Zheng}
\affil{Department of Mathematics, University of
California, Santa Cruz, CA 95064 USA}
\providecommand{\claimname}{Claim}
\providecommand{\theoremname}{Theorem}
\providecommand{\claimname}{Claim}
\providecommand{\theoremname}{Theorem}
\numberwithin{equation}{section}
\providecommand{\claimname}{Claim}
\providecommand{\theoremname}{Theorem}
  \providecommand{\claimname}{Claim}
\providecommand{\theoremname}{Theorem}
\def\ha{\frac{1}{2}}
\begin{document}
\definecolor{shadecolor}{rgb}{0,1,0}
\maketitle
\begin{abstract}
 The structure of $3C$-algebra and $5A$-algebra constructed by Lam-Yamada-Yamauchi is studied  and the uniqueness of the vertex operator algebra structure of these two algebras is established. We also give the fusion rules for these two algebras.

\end{abstract}
\section{Introduction}
\sethlcolor{green}
The Monster simple group $\mathbb{M}$ \cite{G} is generated by some $2A$-involutions and the conjugacy class of the product of two $2A$-involutions $\tau\tau'$ is one of the nine classes $1A, 2A, 3A,
4A, 5A, 6A, 4B, 2B$ and $3C$ in  $\mathbb{M}$ \cite{C}. Moreover, each $2A$-involution $\tau$ defines a unique idempotent $e_{\tau}$ in the Monster Griess algebra, which is called an axis. The inner product of $e_{\tau}$ and $e_{\tau}'$ is uniquely determined by the conjugacy class of the product of two $2A$-involutions $\tau\tau'$.
From the construction of the moonshine vertex operator algebra $V^{\natural}$ \cite{FLM} we know that the Monster Griess algebra is the weight two subspace  $V^{\natural}_2$ of the $V^{\natural}.$ It was discovered in \cite{DMZ}
that  $V^{\natural}$ contains $48$ Virasoro vectors, each Virasoro vector generates a Virasoro vertex operator algebra isomorphic to $L(\ha,0)$ in $V^{\natural}$ and $L(\ha,0)^{\otimes 48}$ is a conformal subalgebra of $V^{\natural}.$ Such a Virasoro vector is called an Ising vector. Miyamoto later realized  that $2e_{\tau}$ is an Ising vector for any $2A$-involution $\tau.$  Thus there is a one-to-one correspondence
between $2A$-involutions of $\mathbb{M}$ and Ising vectors of $V^{\natural}$. According to a result in \cite{C},  the structure of the subalgebra generated
by two Ising vectors $e$ and $f$ in the algebra $V_{2}^{\natural}$ depends on only the conjugacy class of $\tau_{e}\tau_{f}.$ For the nine classes $1A, 2A, 3A,
4A, 5A, 6A, 4B, 2B$ and $3C,$ the inner product $\left\langle e,f\right\rangle$ are $\frac{1}{4}, \frac{1}{2^5}, \frac{13}{2^{10}}, \frac{1}{2^7}, \frac{3}{2^9}, \frac{5}{2^{10}}, \frac{1}{2^8}, 0, \frac{1}{2^8},$ respectively.

 It is natural to ask what the vertex operator subalgebra generated by two Ising vectors in an arbitrary vertex operator algebra is. A beautiful result given in \cite{S} asserts that the inner product of any two different Ising vectors again take these 9 values as in the case of the moonshine vertex operator algebra. In \cite{LYY}, \cite{LYY1}, for each of the nine cases, they constructed a subalgebra of $V_{\sqrt{2}E_{8}}$, which is generated by  two Ising vectors. These vertex operator algebras generated by two Ising vectors are simply called  $1A, 2A, 3A,
4A, 5A, 6A, 4B, 2B$ and $3C$-algebras, denoted by ${\cal U}_{nX}.$ But this raises two more questions: (1) Is the vertex operator algebra structure of these algebras constructed in \cite{LYY}, \cite{LYY1} unique? (2) Is any vertex operator algebra generated by two Ising vectors isomorphic to one of these 9 algebras? The uniqueness of VOA structure of ${\cal U}_{6A}$-algebra has been given in \cite{DJY}. In \cite{SY}, they proved the VOA generated by two Ising vectors whose inner product is $\frac{13}{2^{10}}$ has a unique VOA structure, so $3A$ case in question (2) has been solved thoroughly.

Now consider the uniqueness of VOA structure of ${\cal U}_{nX}$-algebra where $nX\not=3A$ or $6A$. Note that
$\mathcal{U}_{1A}\cong L\left(\frac{1}{2}, 0\right)$ and $\mathcal{U}_{2B}\cong L\left(\frac{1}{2}, 0\right)\otimes L\left(\frac{1}{2}, 0\right).$ So the uniqueness of VOA structure of these two algebras is trivial. We also know
that $\mathcal{U}_{2A}\cong L\left(\frac{1}{2}, 0\right)\otimes L\left(\frac{7}{10}, 0\right)\oplus L\left(\frac{1}{2}, \frac{1}{2}\right)\otimes L\left(\frac{7}{10}, \frac{3}{2}\right)$ is a simple current extension of the subVOA
$L\left(\frac{1}{2}, 0\right)\otimes L\left(\frac{7}{10}, 0\right)$, $\mathcal{U}_{4B}\cong L\left(\frac{1}{2}, 0\right)\otimes L\left(\frac{7}{10}, 0\right)\otimes L\left(\frac{7}{10}, 0\right) \oplus L\left(\frac{1}{2}, \frac{1}{2}\right)\otimes L\left(\frac{7}{10}, \frac{3}{2}\right)\otimes L\left(\frac{7}{10}, 0\right)\oplus L\left(\frac{1}{2}, \frac{1}{2}\right)\otimes L\left(\frac{7}{10}, 0\right)\otimes L\left(\frac{7}{10}, \frac{3}{2}\right)\oplus L\left(\frac{1}{2}, \frac{1}{2}\right)\otimes L\left(\frac{7}{10}, \frac{3}{2}\right)\otimes L\left(\frac{7}{10}, \frac{3}{2}\right)$ is a simple current extension of the subVOA $L\left(\frac{1}{2}, 0\right)\otimes L\left(\frac{7}{10}, 0\right)\otimes L\left(\frac{7}{10}, 0\right)$, so the uniqueness of the VOA structure of $\mathcal{U}_{2A}$ and $\mathcal{U}_{4B}$ follows from \cite{DM1}, also see Remark(\ref{simple current extension}).  $\mathcal{U}_{4A}\cong V_{\mathcal{N}}^{+}$,
So $\mathcal{U}_{4A}$, $\mathcal{U}_{5A}$ and $\mathcal{U}_{3C}$ are the three nontrivial cases left. In this paper, we only consider the uniqueness of VOA structure of $\mathcal{U}_{5A}$ and $\mathcal{U}_{3C}$-algebras. In fact, the uniqueness of VOA structure of $\mathcal{U}_{5A}$ and $\mathcal{U}_{3C}$-algebras can be deduced from Lemma C.1-C.3 in \cite{LYY}. In their paper, they mainly use associativity to prove these lemmas. Here we will give another proof by using commutativity and braiding matrix.

The paper is organized as follows. In Section 2, we review some basic
notions and some well known results in the vertex operator algebra
theory. In Section 3, we study the structure of the $5A$-algebra
and prove the uniqueness of the vertex operator algebra structure
on $\mathcal{U}$. In section 4, we study the structure of the $3C$-algebra
and prove the uniqueness of the vertex operator algebra structure
on $\mathcal{U}$. In section 5, we give fusion rules of the  $5A$-algebra and  $3C$-algebra.
\section{Preliminary}

In this section, we review some basics on vertex operators algebras. For more details of the definition of VOA and its all kinds of modules, etc, we refer \cite{FLM}, \cite{DLM1}, \cite{DLM2}.

\subsection{Ising vector}

\begin{definition} A vector $e\in V_{2}$ is called a \emph{conformal
vector with central charge $c_{e}$ }if it satisfies\emph{ $e_{1}e=2e$
}and $e_{3}e=\frac{c_{e}}{2}\mathbf{1}$. Then the operators $L_{n}^{e}:=e_{n+1},\ n\in\mathbb{Z}$,
satisfy the Virasoro commutation relation
\[
\left[L_{m}^{e},\ L_{n}^{e}\right]=\left(m-n\right)L_{m+n}^{e}+\delta_{m+n,\ 0}\frac{m^{3}-m}{12}c_{e}
\]
for $m,\ n\in\mathbb{Z}.$ A conformal vector $e\in V_{2}$ with central
charge $1/2$ is called an \emph{Ising vector }if $e$ generates the
Virasoro vertex operator algebra $L(1/2,\ 0)$.

\end{definition}

\subsection{Invariant bilinear form}
Let $M=\oplus_{\lambda_{\in\mathbb{C}}}M_{\lambda}$ be a $V$-module.
The restricted dual of $M$ is define by $M'=\oplus_{\lambda_{\in\mathbb{C}}}M_{\lambda}^{\ast}$
where $M_{\lambda}^{\ast}=\text{Hom}_{\mathbb{C}}\left(M_{\lambda},\mathbb{C}\right).$
It was proved in \cite{FHL} that $M'=\left(M',Y_{M'}\right)$ is
naturally a $V$-module such that
\[
\left\langle Y_{M'}\left(v,z\right)f,u\right\rangle =\left\langle f,Y_{M}\left(e^{zL\left(1\right)}\left(-z^{-2}\right)^{L\left(0\right)}v,z^{-1}\right)u\right\rangle ,
\]
for $v\in V,$ $f\in M'$ and $u\in M$, and $\left(M'\right)'\cong M$.
Moreover, if $M$ is irreducible, so is $M'$. A $V$-module $M$
is said to be \emph{self dual} if $M\cong M'$.

\begin{proposition} \label{extension property} \cite{FHL} For $v , v_{1}, v_{2}\in V$ and $v'\in V'$
, we have the following equality of rational functions:

(a):\textbf{Commutativity}:
\begin{equation}
 \iota_{12}^{-1}\left\langle v',Y\left(v_{1},z_{1}\right)Y\left(v_{2},z_{2}\right)w\right\rangle =\iota_{21}^{-1}\left\langle v',Y\left(v_{2},z_{2}\right)Y\left(v_{1},z_{1}\right)w\right\rangle \label{communitivity}
\end{equation}

(b):\textbf{Associativity}:
\begin{equation}
\iota_{12}^{-1}\left\langle v',Y\left(v_{1},z_{1}\right)Y\left(v_{2},z_{2}\right)w\right\rangle =\left(\iota_{20}^{-1}\left\langle t,Y\left(Y\left(v_{1},z_{0}\right)v_{2},z_{2}\right)w\right\rangle \right)|_{z_{0}=z_{1}-z_{2}}\label{associavity}
\end{equation}
where $\iota_{12}^{-1}f\left(z_{1},z_{2}\right)$ denotes the formal
power expansion of an analytic function $f\left(z_{1},z_{2}\right)$
in the domain $\left|z_{1}\right|>\left|z_{2}\right|$ . \end{proposition}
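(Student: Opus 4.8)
The statement is the rationality--commutativity--associativity package of Frenkel--Huang--Lepowsky, and the plan is to derive all of it from the single Jacobi identity axiom
\[
z_0^{-1}\delta\!\left(\frac{z_1-z_2}{z_0}\right)Y(v_1,z_1)Y(v_2,z_2)-z_0^{-1}\delta\!\left(\frac{z_2-z_1}{-z_0}\right)Y(v_2,z_2)Y(v_1,z_1)=z_2^{-1}\delta\!\left(\frac{z_1-z_0}{z_2}\right)Y(Y(v_1,z_0)v_2,z_2),
\]
together with the grading and lower-truncation axioms. First I would pin down that the formal series $\left\langle v',Y(v_1,z_1)Y(v_2,z_2)w\right\rangle$ is genuinely the expansion of a rational function. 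Fixing the weights of $v',v_1,v_2,w$ forces the total $z$-degree, so only the terms $(v_1)_m(v_2)_n w$ with $m+n$ equal to a fixed constant $N$ survive the pairing; lower truncation on $w$ bounds $n$ from above, whence $m=N-n$ is bounded below and the series is a one-sided expansion in $z_2/z_1$ convergent in $|z_1|>|z_2|$. The pole structure is then controlled by observing that $Y(v_1,z_0)v_2=\sum_k (v_1)_k v_2\,z_0^{-k-1}$ has only finitely many negative powers of $z_0$, which bounds the order of the pole along $z_1=z_2$; together with the manifest poles at $z_1=0$ and $z_2=0$ this produces a rational function of the form $g(z_1,z_2)/(z_1^r z_2^s (z_1-z_2)^t)$.

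For part (a), commutativity, the mechanism is that the two products $Y(v_1,z_1)Y(v_2,z_2)$ and $Y(v_2,z_2)Y(v_1,z_1)$, coupled by the two left-hand terms of the Jacobi identity, are the expansions in the two disjoint domains $|z_1|>|z_2|$ and $|z_2|>|z_1|$ of one and the same rational function. Concretely, I would pair the Jacobi identity with $v'$ and $w$ and take $\mathrm{Res}_{z_0}$ to obtain the commutator (iterate) formula, then feed in the bound on negative powers of $z_0$ to conclude that $\left\langle v',Y(v_2,z_2)Y(v_1,z_1)w\right\rangle$ is the expansion in $|z_2|>|z_1|$ of the same $g(z_1,z_2)/(z_1^r z_2^s (z_1-z_2)^t)$ obtained above. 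Since $\iota_{12}^{-1}$ and $\iota_{21}^{-1}$ are by definition the expansions of a common analytic function in these two regions, \eqref{communitivity} follows.

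For part (b), associativity, I would extract from the third delta-function term the statement that $\left\langle v',Y(Y(v_1,z_0)v_2,z_2)w\right\rangle$ is the expansion, in the domain $|z_2|>|z_0|>0$, of a rational function in $z_0,z_2$; the substitution $z_0=z_1-z_2$ (equivalently $z_1=z_0+z_2$) identifies that domain with $|z_1|>|z_2|$ and matches it with the product side. The delta-function calculus does the bookkeeping automatically: comparing coefficients of the three delta expansions, or taking $\mathrm{Res}_{z_1}$, shows that the iterate and the product are expansions of the same rational function after this change of variables, which is exactly \eqref{associavity}.

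The main obstacle I expect is not the delta-function algebra, which is routine, but the rationality step itself: showing that the convergent one-sided series is in fact a rational function with the prescribed denominator, and in particular obtaining the uniform bound on the pole order at $z_1=z_2$ from the truncation of $Y(v_1,z_0)v_2$. Everything after that is careful bookkeeping of the three expansion operators $\iota_{12},\iota_{21},\iota_{20}$ and the regions $|z_1|>|z_2|$, $|z_2|>|z_1|$, and $|z_2|>|z_0|$ that they encode.
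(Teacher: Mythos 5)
The paper offers no proof of this proposition at all; it is quoted from \cite{FHL}, so your attempt can only be measured against the classical Frenkel--Huang--Lepowsky argument. Your overall route is the same as that argument's --- Jacobi identity plus grading and truncation, $\mathrm{Res}_{z_0}$ to relate the two orderings, $\mathrm{Res}_{z_1}$ to relate the iterate to the products --- and those are indeed the right mechanisms.

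However, the step you postpone to your final paragraph is precisely the step that carries all the weight, and your sketch of it would fail as written. Grading and lower truncation do show that $\left\langle v',Y\left(v_{1},z_{1}\right)Y\left(v_{2},z_{2}\right)w\right\rangle$ is a monomial times a formal power series in $z_2/z_1$; but a formal one-sided series need not converge anywhere, and it is meaningless to bound ``the order of the pole along $z_1=z_2$'' before one knows the series represents a rational function --- the truncation of $Y(v_1,z_0)v_2$ does not, by itself, produce either convergence or a pole bound. The missing idea is weak commutativity combined with the two-expansion trick: take $\mathrm{Res}_{z_0}$ of $z_0^{t}\cdot(\text{Jacobi identity})$ with $t$ so large that $(v_1)_{t+j}v_2=0$ for all $j\ge 0$; the delta-function term on the right dies and one obtains $(z_1-z_2)^{t}\,Y(v_1,z_1)Y(v_2,z_2)=(z_1-z_2)^{t}\,Y(v_2,z_2)Y(v_1,z_1)$ as operators. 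Pairing with $v'$ and $w$, the left-hand side is one-sided in $z_2/z_1$, the right-hand side is one-sided in $z_1/z_2$, and a formal series lying in both spaces must be a Laurent polynomial in $z_1,z_2$. This single observation delivers convergence, rationality with denominator $z_1^{r}z_2^{s}(z_1-z_2)^{t}$, and part (a) simultaneously; rationality cannot be established first and commutativity second, in the order your outline imposes. For part (b) there is a further slip: under $z_0=z_1-z_2$ the iterate's domain $|z_2|>|z_0|>0$ becomes $|z_2|>|z_1-z_2|>0$, which is neither equal to nor contained in $|z_1|>|z_2|$; this is harmless for the stated equality of rational functions, but the domain identification you assert is false, and the correct proof of (b) instead passes through the $\mathrm{Res}_{z_1}$ iterate formula together with the already-established rationality of the two products.
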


The following result about bilinear form on $V$ is from \cite{L}:
\begin{theorem}\label{bilinear form}The space of invariant bilinear
forms on $V$ is isomorphic to the space
\[
\left(V_{0}/L\left(1\right)V_{1}\right)^{*}=\mbox{Hom}_{\mathbb{C}}\left(V_{0}/L\left(1\right)V_{1},\mathbb{C}\right).
\]
 \end{theorem}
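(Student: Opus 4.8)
The plan is to identify the space of invariant bilinear forms on $V$ with a space of $V$-module homomorphisms, and then to compute that space using the fact that $V$ is cyclic as a module over itself. First I would recall that, by the definition of the contragredient module $V'$ given above, a bilinear form $(\cdot,\cdot)$ on $V$ is invariant precisely when the linear map $\psi:V\to V^{*}$ defined by $\langle\psi(u),w\rangle=(u,w)$ actually lands in the restricted dual $V'$ and is a homomorphism of $V$-modules: the invariance identity $(Y(v,z)u,w)=(u,Y(e^{zL(1)}(-z^{-2})^{L(0)}v,z^{-1})w)$ is exactly the statement $\psi(Y(v,z)u)=Y_{V'}(v,z)\psi(u)$ read through the definition of $Y_{V'}$. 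This gives a linear isomorphism between the space of invariant forms and $\mathrm{Hom}_{V}(V,V')$. Specializing $v=\omega$ yields $(L(n)u,w)=(u,L(-n)w)$, so in particular the form is graded, pairing $V_{m}$ with $V_{n}$ trivially unless $m=n$.

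Next I would exploit that $V$ is generated by the vacuum as a module over itself, since $u=u_{-1}\mathbf{1}$ for every $u$. Hence any $\phi\in\mathrm{Hom}_{V}(V,V')$ is determined by the single vector $\phi(\mathbf{1})\in V'$, which, being the image of a weight-zero vector, must lie in $V'_{0}=V_{0}^{*}$ and must satisfy $L(-1)\phi(\mathbf{1})=\phi(L(-1)\mathbf{1})=0$ because $L(-1)\mathbf{1}=0$. The operator $L(-1)$ acting on $V'$ is adjoint to $L(1)$ acting on $V$ (the special case $\langle L(-1)f,u\rangle=\langle f,L(1)u\rangle$ of the relation above), and it maps $V'_{0}=V_{0}^{*}$ into $V'_{1}=V_{1}^{*}$. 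Therefore $L(-1)\phi(\mathbf{1})=0$ says exactly that $\phi(\mathbf{1})$, viewed as a functional on $V_{0}$, annihilates $L(1)V_{1}$. This realizes $\phi\mapsto\phi(\mathbf{1})$ as an injection of $\mathrm{Hom}_{V}(V,V')$ into $(V_{0}/L(1)V_{1})^{*}$, and under the correspondence of the first paragraph it is the same as sending a form $(\cdot,\cdot)$ to the functional $v\mapsto(\mathbf{1},v)$ on $V_{0}$.

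The main obstacle is surjectivity: showing that every weight-zero functional $f$ annihilating $L(1)V_{1}$ actually arises from an invariant form. I would extend $f$ to the element $\bar f\in V'_{0}$ supported on $V_{0}$, note that $L(-1)\bar f=0$ by the adjointness computation above, and then define a candidate map by $\phi(v):=v_{-1}\bar f$, using the module action of $V$ on $V'$. The technical heart is to prove that $\bar f$ is vacuum-like, i.e. that $Y_{V'}(v,z)\bar f$ involves no negative powers of $z$ for any $v$; this holds precisely because $L(-1)\bar f=0$, and it yields the creation-type identity $Y_{V'}(v,z)\bar f=e^{zL(-1)}\phi(v)$. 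Granting this, the homomorphism property $\phi(Y(a,z)v)=Y_{V'}(a,z)\phi(v)$ follows from the associativity and commutativity of vertex operators recorded in Proposition \ref{extension property}. This produces a two-sided inverse to $\phi\mapsto\phi(\mathbf{1})$ and completes the isomorphism between invariant bilinear forms and $(V_{0}/L(1)V_{1})^{*}$. I expect the step ``$L(-1)\bar f=0$ implies $\bar f$ vacuum-like'' to be where the real effort lies, since everything else is either formal bookkeeping or a direct consequence of the defining relations.
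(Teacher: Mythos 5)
The paper does not prove Theorem \ref{bilinear form} at all---it is quoted from Li \cite{L}---so there is no internal proof to compare against; your proposal is in fact a correct reconstruction of Li's original argument (invariant forms correspond to $\mathrm{Hom}_{V}(V,V')$, such homomorphisms correspond to vacuum-like vectors of $V'$, and evaluation at $\mathbf{1}$ identifies these with $(V_{0}/L(1)V_{1})^{*}$). The single step you flag as the technical heart---that $L(-1)\bar f=0$ forces $\bar f$ to be vacuum-like---is precisely Li's key proposition, and it closes quickly: when $L(-1)\bar f=0$ the $L(-1)$-derivative property gives $(n+1)\,v_{n}\bar f=-L(-1)v_{n+1}\bar f$ for all $n$, so downward induction from the truncation $v_{n}\bar f=0$ for $n\gg 0$ yields $v_{n}\bar f=0$ for all $n\ge 0$, exactly as your outline requires.
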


\subsection{Intertwining operator and fusion rules}

\begin{definition} Let $\left(V,Y\right)$ be a vertex operator algebra
and let $\left(M^{i},Y^{i}\right),\ \left(M^{j},Y^{j}\right)$ and
$\left(M^{k},Y^{k}\right)$ be three $V$-modules. An \emph{intertwining
operator of type $\left(\begin{array}{c}
M^{k}\\
M^{i}\ M^{j}
\end{array}\right)$} is a linear map
\begin{gather*}
\mathcal{Y}\left(\cdot,z\right):\ M^{i}\to\text{\ensuremath{\mbox{Hom}\left(M^{j},\ M^{k}\right)\left\{ z\right\} }}\\
u\to\mathcal{Y}\left(u,z\right)=\sum_{n\in\mathbb{Q}}u_{n}z^{-n-1}
\end{gather*}
satisfying:

(1) for any $u\in M^{i}$ and $v\in M^{j}$, $u_{n}v=0$ for $n$
sufficiently large;

(2) $\mathcal{Y}(L_{-1}v,\ z)=\left(\frac{d}{dz}\right)\mathcal{Y}\left(v,z\right)$
for $v\in M^{i}$;

(3) (Jacobi Identity) for any $u\in V,\ v\in M^{i}$,
\begin{alignat*}{1}
 & z_{0}^{-1}\delta\left(\frac{z_{1}-z_{2}}{z_{0}}\right)Y^{k}\left(u,z_{1}\right)\mathcal{Y}\left(v,z_{2}\right)-z_{0}^{-1}\delta\left(\frac{-z_{2}+z_{1}}{z_{0}}\right)\mathcal{Y}\left(v,z_{2}\right)Y^{j}\left(u,z_{1}\right)\\
 & =z_{2}^{-1}\left(\frac{z_{1}-z_{0}}{z_{2}}\right)\mathcal{Y}\left(Y^{i}\left(u,z_{0}\right)v,z_{2}\right).
\end{alignat*}
The space of all intertwining operators of type $\left(\begin{array}{c}
M^{k}\\
M^{i}\ M^{j}
\end{array}\right)$ is denoted $I_{V}\left(\begin{array}{c}
M^{k}\\
M^{i}\ M^{j}
\end{array}\right)$. Without confusion, we also denote it by $I_{i,j}^{k}.$ Let $N_{i,\ j}^{k}=\dim I_{i,j}^{k}$.
These integers $N_{i,j}^{k}$ are called the \emph{fusion rules}.
\end{definition}

\begin{definition}

Let $M^{1}$ and $M^{2}$ be $V$-modules. A tensor product for the
ordered pair $\left(M^{1},M^{2}\right)$ is a pair $\left(M,\mathcal{Y}\left(\cdot,z\right)\right)$
which consists of a $V$-module $M$ and an intertwining operator
$\mathcal{Y}\left(\cdot,z\right)$ of type $\left(\begin{array}{c}
W\\
M^{1}\ M^{2}
\end{array}\right)$
satisfies the following universal property: For any $V$-module
$W$ and any intertwining operator $\mathcal{I}\left(\cdot,z\right)$ of type
$\left(\begin{array}{c}
W\\
M^{1}\ M^{2}
\end{array}\right)$, there exists a unique $V$-homomorphism
$\phi$ from $M$ to $W$ such that $\mathcal{I}\left(\cdot,z\right)=\phi\circ \mathcal{Y}\left(\cdot,z\right).$
From the definition it is easy to see that if a tensor product of $M^{1}$
and $M^{2}$ exists, it is unique up to isomorphism. In this case,
we denote the \emph{fusion product} by $M^{1}\boxtimes_{V}M^{2}.$

\end{definition}
Let $V^{1}$ and $V^{2}$ be vertex operator algebras. Let $M^{i}$
, $i=1,2,3$, be $V^{1}$-modules, and $N^{i}$, $i=1,2,3$, be $V^{2}$-modules.
Then $M^{i}\otimes N^{i}$, $i=1,2,3$, are $V^{1}\otimes V^{2}$-modules
by \cite{FHL}. The following property was given in \cite{ADL}:

\begin{proposition} \label{fusion of tensor product}If $N_{M^{1},M^{2}}^{M^{3}}<\infty$
or $N_{N^{1},N^{2}}^{N^{3}}<\infty,$ then
\[
N_{M^{1}\otimes N^{1},M^{2}\otimes N^{2}}^{M^{3}\otimes N^{3}}=N_{M^{1},M^{2}}^{M^{3}}N_{N^{1},N^{2}}^{N^{3}}.
\]
\end{proposition}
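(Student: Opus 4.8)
The plan is to construct an explicit linear isomorphism
\[
\sigma:\ I_{V^{1}}\binom{M^{3}}{M^{1}\,M^{2}}\otimes I_{V^{2}}\binom{N^{3}}{N^{1}\,N^{2}}\ \longrightarrow\ I_{V^{1}\otimes V^{2}}\binom{M^{3}\otimes N^{3}}{(M^{1}\otimes N^{1})\,(M^{2}\otimes N^{2})}
\]
and then read off the asserted equality of fusion rules by taking dimensions. The finiteness hypothesis will be used only to make the surjectivity argument produce a \emph{finite} decomposition, i.e.\ a genuine element of the tensor product on the left. I would treat the two hypotheses symmetrically, carrying out the construction under $N_{M^{1},M^{2}}^{M^{3}}<\infty$ and noting that the case $N_{N^{1},N^{2}}^{N^{3}}<\infty$ follows by swapping the roles of the two tensor factors.

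First I would define $\sigma$ on simple tensors by
\[
\sigma(\mathcal{Y}^{1}\otimes\mathcal{Y}^{2})(u^{1}\otimes u^{2},z)(w^{1}\otimes w^{2})=\mathcal{Y}^{1}(u^{1},z)w^{1}\otimes\mathcal{Y}^{2}(u^{2},z)w^{2},
\]
extended bilinearly, and check that the right-hand side defines an intertwining operator for $V^{1}\otimes V^{2}$. The lower truncation condition is inherited from the two factors; axiom (2) follows from $L_{-1}=L_{-1}^{1}\otimes 1+1\otimes L_{-1}^{2}$ together with the product rule for $\frac{d}{dz}$; and, writing $Y(a\otimes b,z)=Y^{1}(a,z)\otimes Y^{2}(b,z)$, the Jacobi identity for $\sigma(\mathcal{Y}^{1}\otimes\mathcal{Y}^{2})$ factors into the product of the Jacobi identities satisfied by $\mathcal{Y}^{1}$ and $\mathcal{Y}^{2}$ on the respective tensor factors. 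For injectivity I would take families $\{\mathcal{Y}^{1}_{a}\}$ and $\{\mathcal{Y}^{2}_{b}\}$ that are linearly independent in their respective spaces and test a relation $\sum_{a,b}c_{ab}\,\sigma(\mathcal{Y}^{1}_{a}\otimes\mathcal{Y}^{2}_{b})=0$ against vectors $w^{1}\otimes w^{2}$ and functionals in $(M^{3})'\otimes(N^{3})'$; separating variables and using the linear independence of the $\mathcal{Y}^{1}_{a}$ and of the $\mathcal{Y}^{2}_{b}$ forces all $c_{ab}=0$.

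The main obstacle is surjectivity. Given an arbitrary $\mathcal{Y}\in I_{V^{1}\otimes V^{2}}\binom{M^{3}\otimes N^{3}}{(M^{1}\otimes N^{1})\,(M^{2}\otimes N^{2})}$, I would first specialize the Jacobi identity of $\mathcal{Y}$ to elements $u=a\otimes\mathbf{1}$ and to $u=\mathbf{1}\otimes b$. Since $Y(a\otimes\mathbf{1},z)$ acts only on the first tensor factor of each module (and $Y(\mathbf{1}\otimes b,z)$ only on the second), this shows that $\mathcal{Y}$ satisfies separate ``partial'' Jacobi identities with respect to the $V^{1}$-action and the $V^{2}$-action. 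Next, for a homogeneous functional $g^{3}\in(N^{3}_{\lambda})^{*}$ and fixed $u^{2}\in N^{1}$, $w^{2}\in N^{2}$, I would contract the $N^{3}$-factor of $\mathcal{Y}(u^{1}\otimes u^{2},z)(w^{1}\otimes w^{2})$ against $g^{3}$ and renormalize by the power of $z$ attached to the weight $\lambda$, obtaining an $\overline{M^{3}}$-valued series
\[
\mathcal{Z}_{u^{2},w^{2},g^{3}}(u^{1},z)w^{1}.
\]
Because the $V^{1}$-action commutes with this contraction (it is trivial on the $N$-factor), the partial $V^{1}$-Jacobi identity, the truncation condition, and the $L^{1}_{-1}$-derivative property all descend, so $\mathcal{Z}_{u^{2},w^{2},g^{3}}\in I_{V^{1}}\binom{M^{3}}{M^{1}\,M^{2}}$.

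Finally I would fix a basis $\mathcal{Y}^{1}_{1},\dots,\mathcal{Y}^{1}_{p}$ of $I_{V^{1}}\binom{M^{3}}{M^{1}\,M^{2}}$, which is finite precisely because $N_{M^{1},M^{2}}^{M^{3}}=p<\infty$, and expand $\mathcal{Z}_{u^{2},w^{2},g^{3}}=\sum_{i=1}^{p}\phi_{i}(u^{2},w^{2},g^{3})\,\mathcal{Y}^{1}_{i}$ with scalar coefficients $\phi_{i}$ depending linearly on the second-factor data. Letting $u^{2},w^{2},g^{3}$ vary, these coefficients assemble into maps $\mathcal{Y}^{2}_{i}$ on the $V^{2}$-modules; using the partial $V^{2}$-Jacobi identity for $\mathcal{Y}$ I would verify that each $\mathcal{Y}^{2}_{i}\in I_{V^{2}}\binom{N^{3}}{N^{1}\,N^{2}}$, and then that $\mathcal{Y}=\sum_{i=1}^{p}\sigma(\mathcal{Y}^{1}_{i}\otimes\mathcal{Y}^{2}_{i})$. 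Taking dimensions in the isomorphism $\sigma$ then gives $N_{M^{1}\otimes N^{1},M^{2}\otimes N^{2}}^{M^{3}\otimes N^{3}}=N_{M^{1},M^{2}}^{M^{3}}\,N_{N^{1},N^{2}}^{N^{3}}$. The delicate part of the whole argument is the bookkeeping in this last step: tracking the bidegree of $M^{2}\otimes N^{2}$ and $M^{3}\otimes N^{3}$ under $(L^{1}(0),L^{2}(0))$, choosing the correct $z$-power normalization in the contraction, and checking that the reassembled $\mathcal{Y}^{2}_{i}$ genuinely satisfy every intertwining axiom rather than merely being formal coefficients.
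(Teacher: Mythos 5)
A preliminary remark on the comparison: the paper contains no proof of this proposition at all --- it is quoted from \cite{ADL} ("The following property was given in \cite{ADL}"), so your attempt can only be measured against the argument there. Your architecture --- define $\sigma(\mathcal{Y}^{1}\otimes\mathcal{Y}^{2})$ on simple tensors, verify the axioms factor-by-factor, prove injectivity by separating matrix coefficients with weight projections, and prove surjectivity by contracting the second tensor factor and expanding the resulting operators in a finite basis of $I_{V^{1}}\binom{M^{3}}{M^{1}\,M^{2}}$ --- is in effect the proof of \cite{ADL}, with the finiteness hypothesis entering in exactly the right place (to make the expansion finite).

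There is, however, a genuine gap, and it sits at precisely the non-routine point. You assert that, because the $V^{1}$-action commutes with the contraction, ``the partial $V^{1}$-Jacobi identity, the truncation condition, and the $L^{1}_{-1}$-derivative property all descend'' to $\mathcal{Z}_{u^{2},w^{2},g^{3}}$. The commuting argument does prove the first two, but it cannot prove the third: the conformal vector of $V^{1}\otimes V^{2}$ is $\omega^{1}\otimes\mathbf{1}+\mathbf{1}\otimes\omega^{2}$, so
\[
\mathcal{Y}\bigl((L^{1}(-1)u^{1})\otimes u^{2},z\bigr)=\frac{d}{dz}\,\mathcal{Y}(u^{1}\otimes u^{2},z)-\mathcal{Y}\bigl(u^{1}\otimes(L^{2}(-1)u^{2}),z\bigr),
\]
and the last term involves the contraction datum $L^{2}(-1)u^{2}$ rather than $u^{2}$; for the unrenormalized contraction the derivative axiom is simply false. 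This is not a cosmetic issue: operators satisfying only truncation and the Jacobi identity form an infinite-dimensional space (any intertwining operator times an arbitrary power $z^{\alpha}$ qualifies), so without the derivative axiom your finite-basis expansion step collapses. What rescues the argument is the precise renormalization $z^{\operatorname{wt}u^{2}+\operatorname{wt}w^{2}-\lambda}$ --- note the exponent depends on all of the second-factor data, not only on $\lambda$ as your phrasing suggests --- together with the identity
\[
z\,\bigl\langle g^{3},\mathcal{Y}\bigl(u^{1}\otimes L^{2}(-1)u^{2},z\bigr)(w^{1}\otimes w^{2})\bigr\rangle=\bigl(\lambda-\operatorname{wt}u^{2}-\operatorname{wt}w^{2}\bigr)\bigl\langle g^{3},\mathcal{Y}(u^{1}\otimes u^{2},z)(w^{1}\otimes w^{2})\bigr\rangle,
\]
which follows from the commutator formula for $\mathbf{1}\otimes\omega^{2}$ (the $m=1$, i.e.\ $L(0)$, component) applied to $\mathcal{Y}$, and which cancels the unwanted terms exactly. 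Equivalently, one shows the renormalized contraction shifts weights by zero and that ``weight-offset zero plus Jacobi'' forces the derivative axiom via the $L(0)$-commutator. The same verification is required for the reassembled $\mathcal{Y}^{2}_{i}$, which you likewise defer as bookkeeping. So the proposal is fixable and follows the standard route, but as written it asserts, with an invalid justification, the one step of the surjectivity argument that actually requires a proof.
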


\subsection{Simple current extensions}
\begin{definition}
A VOA is graded by an ablian group G if $V=\oplus_{g\in G}V^{g},$ and $u_{n}v\in V^{g+h}$ for any $u\in V^{g}, v\in V^{h}$, and
$n\in\mathbb{Z} .$
\end{definition}
The following proposition was given in \cite{DM1}:
\begin{proposition}\label{simple current}
Let $V=\sum_{g \in G}V^{g}$ be a simple $G$-graded VOA such that $V^{g}\not=0$ for all $g\in G$ and $N_{V^{g},V^{h}}^{V^{k}}=\delta_{g+h,k}$ for all $g,h,k\in G$. Then the VOA structure of $V$ is determined uniquely by the $V^{0}$-module structure of $V$.
\end{proposition}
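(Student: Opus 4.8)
The plan is to show that any two vertex operator algebra structures $Y$ and $\tilde Y$ on the fixed underlying space $V=\bigoplus_{g\in G}V^{g}$ that induce the same $V^{0}$-module structure are intertwined by an isomorphism of the diagonal form $\phi=\bigoplus_{g\in G}\mu_{g}\,\mathrm{id}_{V^{g}}$ for suitable scalars $\mu_{g}\in\C^{\times}$. Since the $V^{0}$-module structure fixes the grading, the vacuum $\mathbf 1$, the action of $V^{0}$, and in particular the conformal vector $\omega\in V^{0}$ together with all the Virasoro operators $L(n)$, both structures share these data; only the off-diagonal products $V^{g}\otimes V^{h}\to V^{g+h}$ can differ.

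First I would observe that for fixed $g,h$ the restriction of $Y$ to $V^{g}\otimes V^{h}$ is, by the Jacobi identity for the action of $V^{0}$, an intertwining operator of type $\binom{V^{g+h}}{V^{g}\,V^{h}}$ for the vertex operator algebra $V^{0}$, and likewise for $\tilde Y$. Because $N_{V^{g},V^{h}}^{V^{g+h}}=1$, this space of intertwining operators is one dimensional; and because $V$ is simple its vertex operators are nondegenerate, so both restrictions are nonzero elements of this line. Hence there is a unique scalar $\lambda_{g,h}\in\C^{\times}$ with $\tilde Y(u,z)v=\lambda_{g,h}\,Y(u,z)v$ for all $u\in V^{g}$, $v\in V^{h}$. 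The creation property $\tilde Y(u,z)\mathbf 1=e^{zL(-1)}u=Y(u,z)\mathbf 1$, together with the agreement of the two $V^{0}$-actions, gives the normalisation $\lambda_{g,0}=\lambda_{0,h}=1$.

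The heart of the argument is to pin down the family $\lambda_{g,h}$. Feeding $\tilde Y=\lambda_{g,h}Y$ into the associativity relation of Proposition \ref{extension property}(b), which both $Y$ and $\tilde Y$ satisfy, and comparing the two sides through the corresponding identity for $Y$, I expect to obtain the $2$-cocycle identity
\[
\lambda_{g,h+k}\,\lambda_{h,k}=\lambda_{g,h}\,\lambda_{g+h,k}\qquad(g,h,k\in G).
\]
Similarly, applying the skew-symmetry $Y(u,z)v=e^{zL(-1)}Y(v,-z)u$ (a consequence of commutativity, Proposition \ref{extension property}(a)) to both structures, with the same operator $L(-1)=\omega_{0}$, yields the symmetry $\lambda_{g,h}=\lambda_{h,g}$. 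Thus $\lambda$ is a normalised \emph{symmetric} $2$-cocycle on the abelian group $G$ with values in $\C^{\times}$.

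Finally I would invoke that symmetric $2$-cocycles of an abelian group $G$ with coefficients in $\C^{\times}$, modulo coboundaries, are classified by $\mathrm{Ext}^{1}_{\Z}(G,\C^{\times})$, and that this group vanishes because $\C^{\times}$ is divisible, hence injective as a $\Z$-module. Consequently $\lambda$ is a coboundary: there is a map $\mu\colon G\to\C^{\times}$, normalised by $\mu_{0}=1$, with $\lambda_{g,h}=\mu_{g+h}\mu_{g}^{-1}\mu_{h}^{-1}$. Setting $\phi|_{V^{g}}=\mu_{g}\,\mathrm{id}$ then gives $\phi(Y(u,z)v)=\mu_{g+h}Y(u,z)v=\mu_{g}\mu_{h}\lambda_{g,h}Y(u,z)v=\tilde Y(\phi u,z)\phi v$, so $\phi$ is a vertex operator algebra isomorphism $(V,Y)\to(V,\tilde Y)$ fixing $V^{0}$, which proves uniqueness. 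I expect the last step to be the main obstacle: the cocycle coming from associativity alone need not be a coboundary, since the Schur multiplier $H^{2}(G,\C^{\times})$ is in general nonzero, so it is essential both to extract the symmetry of $\lambda$ from commutativity and to use the divisibility of $\C^{\times}$ to kill the symmetric cohomology.
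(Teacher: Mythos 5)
The paper does not prove this proposition at all: it is stated as a quotation from \cite{DM1}, so there is no internal proof to compare your attempt against. Your argument is correct, and it is essentially the standard uniqueness argument for simple current extensions that the citation points to: the two structures differ on each $V^{g}\otimes V^{h}$ by a nonzero scalar $\lambda_{g,h}$ (one-dimensionality of $I\binom{V^{g+h}}{V^{g}\,V^{h}}$ together with nondegeneracy of vertex operators in a simple VOA), associativity forces the normalized $2$-cocycle identity, skew-symmetry forces $\lambda_{g,h}=\lambda_{h,g}$, the vanishing of $\mathrm{Ext}^{1}_{\Z}(G,\C^{\times})$ (divisibility of $\C^{\times}$) kills the symmetric cocycle, and the diagonal rescaling $\phi|_{V^{g}}=\mu_{g}\,\mathrm{id}$ is the desired isomorphism. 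The only ingredients you leave implicit are standard but worth naming: that $Y(u,z)v\neq 0$ for nonzero $u,v$ in a simple VOA (needed for both structures, so both must be assumed simple), and that the shared $V^{0}$-action pins down $\mathbf{1}$, $\omega$ and $L(-1)$; your closing remark is also apt, since $H^{2}(G,\C^{\times})$ itself need not vanish and the symmetry extracted from skew-symmetry is exactly what saves the argument.
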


\begin{definition} \
Let $V$ be a simple VOA. An irreducible $V$ module $U$ is called a simple current $V$-module if it satisfies that for every irreducible $V$-module
$M$, the fusion product $U\boxtimes M$ is also irreducible.
\end{definition}

\begin{definition}
Let $V=\sum_{g \in G}V^{g}$ be a simple $G$-graded VOA such that $V^{g}\not=0$ for all $g\in G$, then $V$ is called a $G$-graded simple current extension if all $V^{g},g\in G$ are simple current $V^{0}$-module.
\end{definition}

\begin{remark}\label{simple current extension}
Let $V=\sum_{g \in G}V^{g}$ be a $G$-graded simple current extension. Then by proposition \ref{simple current}, we have the VOA structure of $V$ is determined uniquely by the $V^{0}$-module structure of $V$.
\end{remark}

\subsection{Quantum Galois Theory}

\begin{theorem} \label{classical galois theory}\cite{DM2}Suppose that $V$
is a simple vertex operator algebra and that $G$ is a finite and
faithful solvable group of automorphisms of $V$. Then the following
hold:

(i)$V=\oplus_{\chi\in\text{Irr}\left(G\right)}V^{\chi},$ where $V^{\chi}$ is the subspace of $V$ on which $G$ acts according to
the character $\chi$. Each $V^{\chi}$ is nonzero;

(ii) For $\chi\in\text{Irr}\left(G\right)$, each $V^{\chi}$ is a
simple module for the $G$-graded vertex operator algebra $\mathbb{C}G\otimes V^{G}$
of the form

\[
V^{\chi}=M_{\chi}\otimes V_{\chi},
\]
where $M_{\chi}$ is the simple $\mathbb{C}G$-module affording $\chi$
and where $V_{\chi}$ is a simple $V^{G}$-module.

(iii) The map $M_{\chi}\mapsto V_{\chi}$ is a bijection from the
set of simple $\mathbb{C}G$-modules to the set of (inequivalent)
simple $V^{G}$-modules which are contained in $V$.

\end{theorem}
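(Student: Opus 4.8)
The plan is to read the statement as a double-commutant (Schur--Weyl) duality for the two commuting actors on $V$: the group algebra $\mathbb{C}G$, acting through automorphisms, and the fixed-point subalgebra $V^{G}$, acting through the restriction of the vertex operator module structure. These commute because every $g\in G$ fixes $V^{G}$ pointwise. Writing $R$ for the associative subalgebra of $\mathrm{End}(V)$ generated by all modes $u_{n}$ with $u\in V^{G}$, $n\in\mathbb{Z}$, and $\rho:\mathbb{C}G\to\mathrm{End}(V)$ for the group action, I would reduce the whole theorem to two assertions: (A) $V$ is a completely reducible $R$-module, and (B) $\mathrm{End}_{R}(V)=\rho(\mathbb{C}G)$. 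Granting these, the double centralizer theorem for the semisimple algebra $R$ and its commutant yields the isotypic decomposition $V=\oplus_{\chi}V^{\chi}$ with $V^{\chi}\cong M_{\chi}\otimes V_{\chi}$, the simplicity of each $V_{\chi}$ as a $V^{G}$-module, and the bijection in (iii); the trivial character recovers the simplicity of $V^{G}$ itself.

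For (B), the inclusion $\rho(\mathbb{C}G)\subseteq\mathrm{End}_{R}(V)$ is exactly the commuting property above, and faithfulness of $G$ makes $\rho$ injective, so $\dim\rho(\mathbb{C}G)=|G|$. The opposite inclusion is equivalent, via the bicommutant theorem for the semisimple algebra $R$ (semisimplicity of $\mathbb{C}G$ coming from finiteness of $G$ and Maschke's theorem), to the assertion that every operator commuting with the $G$-action already lies in $R$. This is the point at which I would invoke the density property of a \emph{simple} vertex operator algebra---that the modes $u_{n}$, $u\in V$, act irreducibly on $V$---in order to compute $\mathrm{End}_{R}$ on each isotypic piece and thereby identify the multiplicity spaces with the irreducible $\mathbb{C}G$-modules $M_{\chi}$. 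I expect this density/counting argument to go through cleanly once (A) is in hand.

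The genuine work is in (A), and this is where the solvability hypothesis earns its keep: rather than prove semisimplicity over $R$ in one stroke, I would induct on $|G|$. For the base case $G=\langle g\rangle$ of prime order, $g$ acts semisimply with roots-of-unity eigenvalues, giving $V=\oplus_{i}V_{i}$ into character eigenspaces; using that $u_{n}$ with $u\in V_{i}$ maps $V_{j}$ into $V_{i+j}$, together with irreducibility of $V$ over its own modes, I would show each $V_{i}$ is a simple $V_{0}=V^{G}$-module and that distinct $i$ give non-isomorphic modules. For the inductive step, pick a normal subgroup $N\triangleleft G$ with $G/N$ cyclic of prime order (available since $G$ is solvable), apply the inductive hypothesis to $N$ to conclude that $V^{N}$ is a simple VOA on which $G/N$ acts faithfully, then apply the base case to $G/N$ acting on $V^{N}$, and finally reassemble the two gradings by Clifford theory to recover the decomposition indexed by $\mathrm{Irr}(G)$.

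I anticipate the main obstacle to be the base case: establishing simultaneously the simplicity of each eigenspace $V_{i}$ as a $V^{G}$-module and the pairwise non-isomorphism of the $V_{i}$, since this is the step that truly uses the vertex-algebra structure---the compatibility of the character grading with the operator product---rather than pure group theory. Here the nondegenerate invariant bilinear form guaranteed by Theorem \ref{bilinear form} (available because $V$ is simple) should be decisive: being $G$-invariant after normalization, it pairs $V_{i}$ with $V_{-i}$ and so identifies the contragredient $V_{i}'\cong V_{-i}$, which lets me control the isomorphism classes among the eigenspace modules and organize the non-isomorphism argument.
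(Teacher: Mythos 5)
You should first note that the paper does not prove this theorem at all: it is quoted verbatim from \cite{DM2} and used as a black box, so the only meaningful comparison is with the Dong--Mason proof itself. Your overall architecture --- commuting actions of $\mathbb{C}G$ and $V^{G}$, a double-commutant reduction, induction on $|G|$ through a normal subgroup of prime index supplied by solvability, and a prime-cyclic base case handled via eigenspace decomposition --- is indeed the skeleton of that proof. One part of your base case is also essentially right and complete: if $0\neq w\in V_{i}$, then simplicity of $V$ gives $V=\mathrm{span}\{v_{n}w\}$, and projecting this relation onto the eigenspace grading (since $u_{n}$ with $u\in V_{j}$ maps $V_{i}$ into $V_{i+j}$) yields $V_{i}=\mathrm{span}\{u_{n}w : u\in V^{G}\}$, so each eigenspace is a simple $V^{G}$-module.

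However, there are concrete gaps exactly where you located the difficulty. First, part (i) of the theorem --- that \emph{every} $V^{\chi}$ is nonzero --- is never addressed; faithfulness only gives that some nontrivial eigenspace is nonzero, and to conclude that the set of characters with $V^{\chi}\neq 0$ is all of $\mathrm{Irr}(G)$ one needs the no-zero-divisor property of simple VOAs ($Y(u,z)v\neq 0$ for $u,v\neq 0$, a result of Dong--Li), which makes the support of the grading a subgroup. Second, your mechanism for pairwise inequivalence fails on both counts: Theorem \ref{bilinear form} does \emph{not} guarantee a nonzero invariant form on a simple VOA --- it identifies the space of forms with $\left(V_{0}/L(1)V_{1}\right)^{*}$, which the hypotheses of the present theorem (simplicity only, no CFT-type or $L(1)V_{1}=0$ assumption) do not control --- and even granting a nondegenerate $G$-invariant form, the conclusion $V_{i}'\cong V_{-i}$ is perfectly consistent with $V_{i}\cong V_{j}$ for $i\neq j$, so no contradiction can be extracted from it. The actual argument in \cite{DM2} requires genuine vertex-operator input (duality/associativity together with the no-zero-divisor property, e.g.\ showing a putative $V^{G}$-isomorphism between distinct eigenspaces forces a vacuum-like vector outside $V^{G}$). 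Third, your step (B) is misdiagnosed as routine: once complete reducibility (A) holds, $\mathrm{End}_{R}(V)$ is the sum of $\mathrm{Hom}(V_{i},V_{j})$ over isomorphic pairs, and identifying this with $\rho(\mathbb{C}G)$ \emph{is} the inequivalence statement, not a consequence of Jacobson density; moreover $\dim\rho(\mathbb{C}G)=|G|$ needs linear independence of distinct automorphisms (a Dedekind-type lemma using $g(u_{n}v)=g(u)_{n}g(v)$), which does not follow from faithfulness of $G$ alone.
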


\begin{definition} \cite{DJX}
 Let $V$ be a vertex operator algebra of CFT type and $M$ a $V$-module such that the trace function $Z_{V}\left(\tau\right)$ and $Z_{M}\left(\tau\right)$
exist. The quantum dimension of $M$ over $V$ is defined as
\[
 q\dim_{V}M=\lim_{y\to 0}\frac{Z_{M}\left(iy\right)}{Z_{V}\left(iy\right)}
\]
Alternatively, we can use the following definiton:
\[
q\dim_{V}M=\lim_{q\to 1^{-}}\frac{ch_{q}M}{ch_{q}V}
\]
\end{definition}

\begin{theorem} (theorem 6.3 in \cite{DJX})\label{quantum dimension and orbifold module} Let
$V$ be a rational and $C_{2}$-cofinite simple vertex operator algebra.
Assume $V$ is $g$-rational and the conformal weight of any irreducible
$g$-twisted $V$-module is positive except for $V$ itself for all
$g\in G$. Then
\[
q\dim_{V^{G}}V_{\chi}=\dim W_{\chi}.
\]
\end{theorem}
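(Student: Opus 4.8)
\emph{Proof plan.} The plan is to compute the quantum dimension directly from graded characters, using quantum Galois theory (Theorem \ref{classical galois theory}) to turn everything into a statement about group characters, and then to read off the limit $q\to 1^-$ from the behaviour of graded trace functions at the cusp $\tau\to 0$. For $g\in G$ write $Z_V(g,\tau)=\mathrm{tr}_V\,g\,q^{L(0)-c/24}$. By Theorem \ref{classical galois theory} we have $V=\bigoplus_{\chi\in\mathrm{Irr}(G)} W_\chi\otimes V_\chi$ as a module for $\mathbb{C}G\otimes V^{G}$, where $W_\chi$ is the simple $\mathbb{C}G$-module affording $\chi$ and the $V_\chi$ are pairwise inequivalent simple $V^{G}$-modules. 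Taking the trace of $g$ on the factor $W_\chi$ gives $\chi(g)$, so
\[
Z_V(g,\tau)=\sum_{\chi}\chi(g)\,ch_q V_\chi .
\]
By the orthogonality relations for irreducible characters this inverts to $ch_q V_\chi=\frac{1}{|G|}\sum_{g}\overline{\chi(g)}\,Z_V(g,\tau)$; in particular $ch_q V^{G}=ch_q V_{\mathrm{triv}}=\frac{1}{|G|}\sum_g Z_V(g,\tau)$, since $V^{G}$ is precisely the trivial isotypic component.

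Substituting these into the definition of the quantum dimension gives
\[
q\dim_{V^{G}}V_\chi=\lim_{q\to 1^-}\frac{ch_q V_\chi}{ch_q V^{G}}=\lim_{\tau\to 0^+}\frac{\sum_g \overline{\chi(g)}\,Z_V(g,\tau)}{\sum_g Z_V(g,\tau)} ,
\]
where $q=e^{2\pi i\tau}$ with $\tau=iy$ and $y\to 0^+$. Everything now reduces to the single ratio $\lim_{\tau\to 0^+} Z_V(g,\tau)/Z_V(1,\tau)$. If this limit is $1$ for $g=1$ and $0$ for $g\neq 1$, then both numerator and denominator are dominated by their $g=1$ terms and the ratio tends to $\overline{\chi(1)}/1=\chi(1)=\dim W_\chi$, which is the assertion.

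To analyse the ratio I would invoke the modular invariance of orbifold trace functions (Zhu's theorem together with its twisted-module extension due to Dong--Li--Mason). For commuting $a,b\in G$ write $Z(a,b;\tau)=\mathrm{tr}_{M(a)}\,b\,q^{L(0)-c/24}$ for the graded trace of $b$ over the $a$-twisted $V$-modules $M(a)$, so that $Z_V(g,\tau)=Z(1,g;\tau)$. Under $S\colon\tau\mapsto -1/\tau$ the function $Z(1,g;\tau)$ is a finite linear combination of the functions $Z(g^{-1},b;-1/\tau)$, each a trace over $g^{-1}$-twisted modules; here $g$-rationality guarantees that only finitely many such modules occur. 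With $\tau=iy$ we have $-1/\tau=i/y\to i\infty$ as $y\to 0^+$, so $q'=e^{-2\pi/y}\to 0$ and the leading term of such a trace is $(\text{const})\,q'^{\,h_g-c/24}=(\text{const})\,e^{2\pi(c/24-h_g)/y}$, where $h_g$ is the minimal conformal weight occurring among the irreducible $g^{-1}$-twisted modules. Applying the hypothesis to the element $g^{-1}$ gives $h_g>0$ for every $g\neq 1$, while for $g=1$ the minimum is $0$, attained by the vacuum of $V$. Hence
\[
\frac{Z_V(g,\tau)}{Z_V(1,\tau)}\sim(\text{const})\,e^{-2\pi h_g/y}\longrightarrow \delta_{g,1}\qquad(y\to 0^+),
\]
and the claim follows.

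The main obstacle is making the last step rigorous, i.e. controlling the cusp asymptotics. This needs two inputs: (i) that each $Z_V(g,\tau)$ extends to a genuine holomorphic function on the upper half-plane transforming under $S$ with a well-defined finite matrix — this is exactly where rationality, $C_2$-cofiniteness and $g$-rationality are used, to guarantee convergence and the finiteness of the relevant twisted module categories; and (ii) that the leading coefficient of the $S$-transform of $Z_V(1,\tau)$ is nonzero, indeed positive as the $(V,V)$-entry of the modular $S$-matrix, so that the denominator genuinely grows like $e^{\pi c/(12y)}$ and dominates every $g\neq 1$ contribution. Granting (i) and (ii), the positivity hypothesis on twisted conformal weights does the rest: for $g\neq 1$ the factor $e^{-2\pi h_g/y}$ forces the corresponding term to vanish in the limit irrespective of the sign of its coefficient, so no delicate cancellation analysis is required beyond the leading term of the $g=1$ piece.
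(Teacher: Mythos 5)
This theorem is quoted in the paper from \cite{DJX} (Theorem 6.3 there) without proof, so the only meaningful comparison is with the argument in that source; your proposal reconstructs essentially that argument: the quantum Galois decomposition $V=\bigoplus_{\chi}W_\chi\otimes V_\chi$, character orthogonality to write $ch_q V_\chi=\frac{1}{|G|}\sum_{g}\overline{\chi(g)}\,Z_V(g,\tau)$, and the Dong--Li--Mason twisted modular invariance combined with the positivity of twisted conformal weights to conclude $Z_V(g,iy)/Z_V(1,iy)\to\delta_{g,1}$ as $y\to 0^+$. The two analytic inputs you flag---finiteness, convergence and $S$-transformation of the twisted trace functions, and positivity of the leading coefficient $S_{V,V}$ of the transformed vacuum character---are precisely the facts \cite{DJX} supplies from orbifold modular-invariance theory (and your use of the positivity of twisted character coefficients to bound the $g\neq 1$ terms in absolute value is sound), so your outline is correct and matches the original proof.
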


\begin{proposition} \cite{DJX} \label{qdim of simple current}
Let $V$ be a
rational and $C_{2}$-cofinite simple vertex operator algebra of CFT
type with $V\cong V'$. Let $M^{0},\ M^{1},\ \cdots,\ M^{d}$ be all
the inequivalent irreducible $V$-modules with $M^{0}\cong V$ . The
corresponding conformal weights $\lambda_{i}$ satisfy $\lambda_{i}>0$
for $0<i\le d$. Then

(i) $q\dim_{V}\left(M^{i}\boxtimes M^{j}\right)=q\dim_{V}M^{i}\cdot q\dim_{V}M^{j},$
$\forall i,j$.

(ii) A $V$-module $M^{i}$ is a simple current if and only if $q\dim_{V}M^{i}=1$.

(iii) $q\dim_{V}M^{i}\in\left\{ 2\cos\left(\pi/n\right)|n\ge3\right\} \cup\left\{ a|2\le a<\infty,a\ is\ algebraic\right\} .$

\end{proposition}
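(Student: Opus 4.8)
The plan is to reduce everything to the asymptotic behaviour of the graded trace functions and the modular $S$-matrix. Since $V$ is rational and $C_{2}$-cofinite, Zhu's modular invariance theorem applies: the characters $Z_{M^{i}}(\tau)=ch_{q}M^{i}$ of the finitely many irreducible modules span a space invariant under $SL(2,\mathbb{Z})$, and under $\tau\mapsto-1/\tau$ one has $Z_{M^{i}}(-1/\tau)=\sum_{j}S_{ij}Z_{M^{j}}(\tau)$ for a constant matrix $(S_{ij})$. First I would set $-1/\tau=iy$, so that $Z_{M^{i}}(iy)=\sum_{j}S_{ij}Z_{M^{j}}(i/y)$. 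As $y\to0^{+}$ we have $i/y\to i\infty$, hence $q'=e^{-2\pi/y}\to0$ and each $Z_{M^{j}}(i/y)$ behaves like $(\dim M^{j}_{\lambda_{j}})\,q'^{\lambda_{j}-c/24}(1+o(1))$. Because $\lambda_{0}=0$ while $\lambda_{j}>0$ for $j>0$ (and $V$ is of CFT type, so $\dim V_{0}=1$), the term $j=0$ strictly dominates, giving $Z_{M^{i}}(iy)\sim S_{i0}\,q'^{-c/24}$. Dividing the expansion for $M^{i}$ by that for $M^{0}=V$ and passing to the limit yields the fundamental formula
\[
q\dim_{V}M^{i}=\frac{S_{i0}}{S_{00}}.
\]
In particular each quantum dimension is a well-defined positive real number and $S_{00}>0$.

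For (i) I would invoke the Verlinde formula, which for rational $C_{2}$-cofinite $V$ reads $N_{ij}^{k}=\sum_{m}S_{im}S_{jm}\overline{S_{km}}/S_{0m}$; equivalently, the nonnegative integer matrices $N_{i}$ with entries $(N_{i})_{jk}=N_{ij}^{k}$ are simultaneously diagonalized by $S$ with eigenvalues $S_{im}/S_{0m}$. Taking the column $m=0$, whose entries $S_{k0}=S_{00}\,q\dim_{V}M^{k}$ are all positive, gives $\sum_{k}N_{ij}^{k}S_{k0}=(S_{i0}/S_{00})S_{j0}$. Dividing by $S_{00}$ and using the displayed formula turns this into $\sum_{k}N_{ij}^{k}\,q\dim_{V}M^{k}=q\dim_{V}M^{i}\cdot q\dim_{V}M^{j}$. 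Since $M^{i}\boxtimes M^{j}\cong\bigoplus_{k}N_{ij}^{k}M^{k}$ and the quantum dimension is additive over direct sums, the left-hand side is exactly $q\dim_{V}(M^{i}\boxtimes M^{j})$, proving (i).

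For (ii) the two inputs are the positivity estimate $q\dim_{V}M^{k}\ge1$ for every irreducible module, with equality for $V$, and the formula from (i). If $M^{i}$ is a simple current then $M^{i}\boxtimes(M^{i})'$ is irreducible; as it always contains $V$ with multiplicity one, it must equal $V$, so $q\dim_{V}M^{i}\cdot q\dim_{V}(M^{i})'=1$, forcing $q\dim_{V}M^{i}=1$ since both factors are at least $1$. Conversely, if $q\dim_{V}M^{i}=1$ then $q\dim_{V}(M^{i}\boxtimes(M^{i})')=1$; this product contains $V$, every constituent has quantum dimension $\ge1$, and the total equals $1$, so $M^{i}\boxtimes(M^{i})'=V$. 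Thus $M^{i}$ is invertible in the fusion ring, and for any irreducible $M$ the identity $(M^{i})'\boxtimes(M^{i}\boxtimes M)=M$ shows $M^{i}\boxtimes M$ cannot have more than one irreducible constituent, so $M^{i}$ is a simple current.

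For (iii) I would read off the algebraic nature of $q\dim_{V}M^{i}$ from the fusion matrix. The strictly positive vector $(S_{k0})_{k}$ is an eigenvector of the nonnegative integer matrix $N_{i}$ with eigenvalue $q\dim_{V}M^{i}$, so by Perron-Frobenius this is the spectral radius of $N_{i}$ and hence an algebraic integer at least $1$. When $q\dim_{V}M^{i}\ge2$ this already places it in the second set of values. When $1\le q\dim_{V}M^{i}<2$, the decisive input is the classification of nonnegative integer matrices (equivalently finite graphs) of operator norm strictly below $2$: such norms are exactly the numbers $2\cos(\pi/n)$, $n\ge3$, the same restriction governing the Jones index of subfactors. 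This forces $q\dim_{V}M^{i}=2\cos(\pi/n)$ in the remaining range, completing (iii). I expect this last step to be the main obstacle, since it rests on an external rigidity theorem rather than on the VOA formalism, whereas everything before it is a direct consequence of modular invariance, the Verlinde formula, and positivity of quantum dimensions.
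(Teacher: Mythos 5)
This proposition is not proved in the paper at all: it is quoted verbatim from \cite{DJX} as a preliminary, so the only meaningful benchmark is the original argument of Dong--Jiao--Xu. Your proposal is correct and essentially reproduces that argument: the asymptotics of characters under $\tau\mapsto -1/\tau$ (valid by Zhu's modular invariance plus the hypotheses of CFT type and positive weights $\lambda_i>0$) give $q\dim_V M^i=S_{i0}/S_{00}$; Huang's Verlinde formula (which holds precisely under rationality, $C_2$-cofiniteness, CFT type and $V\cong V'$) gives (i); the inequality $q\dim_V M\ge 1$ together with invertibility in the fusion ring and associativity of $\boxtimes$ gives (ii); and the Perron--Frobenius identification of $q\dim_V M^i$ with the spectral radius of the integral fusion matrix $N_i$, combined with the classification of graph norms below $2$ as $2\cos(\pi/n)$, gives (iii) --- the two facts you import without proof ($q\dim_V M\ge 1$ and the Verlinde formula) are exactly the supporting results established or invoked in \cite{DJX} under these same hypotheses, so there is no genuine gap.
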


\begin{remark} \cite{ADJR} \label{product property of qdim}Let $U$
and $V$ be a vertex operator algebra under the same assumption with Proposition \ref{qdim of simple current}, $M$ be a $U$-module
and $N$ be a $V$-module. Then

\[
q\dim_{U\otimes V}M\otimes N=q\dim_{U}M\cdot q\dim_{U}N.
\]
\end{remark}

\subsection{\label{subsec:The-unitary-series}The unitary series of the Virasoro
VOAs}
From now on we always assume $p,q\in \{2,3,4,...\}$, and $p,q$ are relatively prime.
\begin{definition} \cite{W} \label{admissiblepair} An ordered triple of pairs of integers $\left(\left(m,n\right),\left(m',n'\right),\left(m'',n''\right)\right)$
is  \emph{admissible }if $0< m,m',m''< p,0< n,n',n''< q$,
$m+m'+m''<2p,$ $n+n'+n''<2q$, $m<m'+m''$, $m'<m+m''$,
$m''<m+m'$, $n<n'+n''$, $n'<n+n''$, $n''<n+n'$, and the sums $m+m'+m''$,
$n+n'+n''$ are odd. We identify the triples $\left(\left(m,n\right),\left(m',n'\right),\left(m'',n''\right)\right)$ and
$\left(\left(m,n\right),\left(p-m',q-n'\right),\left(p-m'',q-n''\right)\right)$.
\end{definition}

Let $c_{p,q}=1-\frac{6(p-q)^2}{pq}$, $h_{m,n}=\frac{(np-mq)^2-(p-q)^2}{4pq}$, $0<m<p, 0<n<q$. $L\left(c_{p,q},h_{m,n}\right)$ is
the irreducible highest weight representation of the Virasoro algebra $L$ with highest weight $\left(c_{p,q},h_{m,n}\right)$. Then $L\left(c_{p,q},0\right)$
has a VOA structure. Moreover, we have the following results \cite{W}:

\begin{theorem} \cite{W}
The vertex operator algebra $L\left(c_{p,q},0\right)$ is rational and the minimal modules $L\left(c_{p,q},h_{m,n}\right)$, $0<m<p, 0<n<q$ are
all irreducible representations of $L\left(c_{p,q},0\right)$.
\end{theorem}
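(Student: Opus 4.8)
The plan is to pass to Zhu's associative algebra $A(V)$ of $V=L(c_{p,q},0)$, whose finite-dimensional semisimplicity simultaneously yields the classification of irreducible modules and, with a complete-reducibility input, rationality. Recall that for any vertex operator algebra $V$ there is a one-to-one correspondence $U\mapsto L(U)$ between irreducible $A(V)$-modules and irreducible admissible $V$-modules, and that the zero-mode $o(v)=v_{\mathrm{wt}(v)-1}$ realizes the action of $[v]\in A(V)$ on the lowest weight space of a module. So the first goal is to compute $A(V)$ explicitly.

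First I would compute Zhu's algebra of the universal Virasoro vertex operator algebra $M(c,0)$ (the vacuum Verma module with central charge $c$). Using Zhu's $*$- and $\circ$-products together with the Virasoro relations, one shows $A(M(c,0))\cong\C[x]$, a polynomial algebra, with $x=[\omega]$ the class of the conformal vector and with $o(\omega)=L(0)$ acting on a lowest weight vector of weight $h$ by multiplication by $h$; thus the simple modules $\C[x]/(x-h)=\C_h$ run over all lowest weights $h\in\C$, recovering the modules $L(c,h)$. Since $L(c_{p,q},0)=M(c_{p,q},0)/J$, where $J$ is the maximal proper ideal generated by the singular vector $u$ of weight $N:=(p-1)(q-1)$, we get $A(V)\cong\C[x]/(G(x))$, where $G(x)$ is the image of $u$ under the projection $M(c_{p,q},0)\to A(M(c_{p,q},0))=\C[x]$.

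The heart of the argument is to identify $G(x)=\prod(x-h_{m,n})$, the product taken over the distinct Kac-table weights $h_{m,n}$ under the identification $(m,n)\sim(p-m,q-n)$. For the roots: each $L(c_{p,q},h_{m,n})$ is a genuine lowest weight $L(c_{p,q},0)$-module, since the singular vector $u$ annihilates its lowest weight vector by the Feigin--Fuchs/Kac-determinant structure of $M(c_{p,q},h_{m,n})$, so $G(h_{m,n})=0$. For the degree: under the weight filtration, a PBW monomial $L(-n_1)\cdots L(-n_k)\mathbf{1}$ of weight $N$ (with each $n_i\ge 2$) maps to a polynomial of degree $\le k$ in $x$, so the top degree is contributed solely by the $L(-2)^{N/2}\mathbf{1}$ component of $u$, whose coefficient is nonzero; hence $\deg G=N/2=(p-1)(q-1)/2$, which is exactly the number of distinct $h_{m,n}$. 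Therefore $G$ has simple roots, $A(V)\cong\bigoplus_{(m,n)}\C$ is commutative semisimple, and its one-dimensional modules $\C_{h_{m,n}}$ produce precisely the irreducible modules $L(c_{p,q},h_{m,n})$.

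It remains to upgrade the classification to rationality, that is, complete reducibility of every admissible $V$-module $M=\oplus_{n\ge 0}M(n)$. Semisimplicity of $A(V)$ makes $M(0)$ a direct sum of the $\C_{h_{m,n}}$, and the submodule generated by each lowest weight vector has some $L(c_{p,q},h_{m,n})$ as a quotient; the remaining task is to rule out nontrivial gluings, which reduces to the vanishing of $\mathrm{Ext}^1$ between the irreducibles $L(c_{p,q},h_{m,n})$ in the category of $V$-modules. This vanishing follows from the Virasoro (BGG/Feigin--Fuchs) module theory once the relation $u=0$ forces all composition factors into the Kac table; alternatively one invokes $C_2$-cofiniteness of $L(c_{p,q},0)$ --- itself a consequence of the same singular vector --- to run Zhu's finiteness machinery. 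I expect the two delicate points to be (i) that the singular vector $u$ has no closed form in general, so the degree and leading-term bookkeeping must replace any attempt to write $G$ down directly, and (ii) the $\mathrm{Ext}^1$-vanishing needed for complete reducibility, which is where rationality genuinely goes beyond the mere classification of irreducibles.
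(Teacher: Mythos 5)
The paper itself contains no proof of this theorem --- it is quoted verbatim from \cite{W} --- and your proposal reconstructs essentially Wang's own argument from that reference: compute $A\left(L\left(c_{p,q},0\right)\right)\cong\C[x]/\left(G(x)\right)$ with $G$ the image in $A\left(M(c,0)\right)\cong\C[x]$ of the weight-$(p-1)(q-1)$ vacuum singular vector, get $\deg G=(p-1)(q-1)/2$ from the leading-term bookkeeping (only the $L(-2)^{(p-1)(q-1)/2}\mathbf{1}$ component can reach top degree), identify the roots with the distinct Kac-table weights $h_{m,n}$ using Feigin--Fuchs theory, and then upgrade the resulting classification of irreducibles to complete reducibility of admissible modules by the Virasoro (Feigin--Fuchs/BGG) structure theory. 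The one caution worth recording is that your proposed ``alternative'' for the last step does not work: $C_2$-cofiniteness by itself does not imply complete reducibility (the triplet algebras are $C_2$-cofinite but not rational), so the Ext-vanishing argument grounded in Feigin--Fuchs module theory, which you list first, is the route the proof must actually take --- exactly as in \cite{W}.
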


\begin{theorem} \label{fusion rules of virasoro modules}\cite{W}
The fusion rules between $L\left(c_{p,q},0\right)$-modules $L\left(c_{p,q},h_{m',n'}\right)$ and $L\left(c_{p,q},h_{m'',n''}\right)$ are
\[
L\left(c_{p,q},h_{m',n'}\right)\boxtimes L\left(c_{p,q},h_{m'',n''}\right) =\sum_{\left(m,n\right)}N_{\left(m',n'\right),\left(m'',n''\right)}^{\left(m,n\right)}L\left(c_{p,q},h_{m,n}\right),
\]
where $N_{\left(m',n'\right),\left(m'',n''\right)}^{\left(m,n\right)}$
is $1$ iff $\left(\left(m,n\right),\left(m',n'\right),\left(m'',n''\right)\right)$
is an admissible triple of pairs and $0$ otherwise. \end{theorem}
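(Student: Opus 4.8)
The plan is to reduce the computation of these fusion rules to a finite-dimensional problem controlled by the singular-vector structure of the minimal models, and then to show that the resulting upper bound is attained. Throughout write $V=L(c_{p,q},0)$ and $M_{m,n}=L(c_{p,q},h_{m,n})$, and recall that $V$ is rational with exactly the $\frac{(p-1)(q-1)}{2}$ inequivalent irreducible modules $M_{m,n}$. First I would compute Zhu's associative algebra $A(V)$. Since $V$ is generated by the conformal vector $\omega$, $A(V)$ is a quotient of $\mathbb{C}[x]$ with $x=[\omega]$; the nontrivial singular vector of the vacuum Verma module $M(c_{p,q},0)$, which sits at level $(p-1)(q-1)$, projects to a polynomial $G_{p,q}(x)$ whose roots are exactly the conformal weights, so that
\[
A(V)\cong \mathbb{C}[x]/\langle G_{p,q}(x)\rangle,\qquad G_{p,q}(x)=\prod_{(m,n)}\left(x-h_{m,n}\right).
\]
Each top level $M_{m,n}(0)$ is then the one-dimensional $A(V)$-module on which $x$ acts by the scalar $h_{m,n}$, and the Zhu bimodule $A(M_{m',n'})$ is computed as a quotient of a free bimodule by the relations obtained by applying the bimodule projection to the singular vector of $M_{m',n'}$.

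For the upper bound I would invoke the Frenkel--Zhu theorem, which embeds the space of intertwining operators and gives
\[
N_{(m',n'),(m'',n'')}^{(m,n)}\le \dim \mathrm{Hom}_{A(V)}\!\left(A(M_{m',n'})\otimes_{A(V)}M_{m'',n''}(0),\,M_{m,n}(0)\right).
\]
Because $M_{m'',n''}(0)$ and $M_{m,n}(0)$ are one-dimensional with $x$ acting by $h_{m'',n''}$ and $h_{m,n}$, this Hom space is at most one-dimensional and is nonzero precisely when the bimodule relations coming from the singular vector are compatible with those two scalars. Unwinding the polynomial relations reproduces exactly the admissibility conditions of Definition \ref{admissiblepair}, so the bound equals $1$ for admissible triples and $0$ otherwise.

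It then remains to produce a nonzero intertwining operator for each admissible triple, i.e. to saturate the bound. Here I would argue inductively using the two fundamental modules $M_{2,1}$ and $M_{1,2}$, whose Verma modules carry a singular vector already at level $2$. The resulting second-order null-vector relation forces every three-point matrix element to satisfy a hypergeometric (BPZ-type) differential equation, and counting its solutions yields the elementary two-channel rules
\[
M_{2,1}\boxtimes M_{m,n}=M_{m-1,n}\oplus M_{m+1,n},\qquad M_{1,2}\boxtimes M_{m,n}=M_{m,n-1}\oplus M_{m,n+1},
\]
with truncation at the walls $m=1,p-1$ and $n=1,q-1$. Since every $M_{m,n}$ is obtained from $V$ by repeated fusion with $M_{2,1}$ and $M_{1,2}$, associativity of the fusion product propagates these elementary rules to arbitrary pairs, and matching the outcome against the upper bound forces equality, recovering the stated fusion rules.

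The main obstacle is this lower bound: the $A(V)$-computation only delivers $N\le 1$, and one must actually exhibit the intertwining operators. The inductive route via $M_{2,1},M_{1,2}$ reduces the problem to the nonvanishing of the elementary channels, but care is needed to verify that associativity collapses no channel and that the null-vector differential equations carry the full expected number of admissible solutions. A secondary, purely technical difficulty is that the explicit singular vectors of $M_{m',n'}$ needed for the bimodule relations are given by the cumbersome Benoit--Saint-Aubin and Feigin--Fuchs formulae, so extracting the precise relations is computationally heavy even though the final admissibility conditions come out clean.
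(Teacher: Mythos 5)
The paper does not prove this statement at all --- it is quoted verbatim from Wang \cite{W} --- so the only meaningful comparison is with Wang's own argument. The first half of your proposal is exactly that argument: $A(L(c_{p,q},0))\cong\mathbb{C}[x]/\langle G_{p,q}(x)\rangle$ with roots the $h_{m,n}$, irreducible modules one-dimensional over $A(V)$, and the Frenkel--Zhu bimodule Hom space computed from the singular-vector relations (via Feigin--Fuchs). One small imprecision: the level-$(p-1)(q-1)$ singular vector lives in the quotient $M(c_{p,q},0)/\langle L_{-1}\mathbf{1}\rangle$, not in the Verma module $M(c_{p,q},0)$ itself, whose first singular vector is $L_{-1}\mathbf{1}$ at level $1$.

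The genuine gap is the lower bound, and it is not a technicality you can defer: as written, your argument proves only $N_{(m',n'),(m'',n'')}^{(m,n)}\le 1$ with vanishing off the admissible triples, not the stated equality. The null-vector ODE for $M_{2,1}$ shows that any intertwining operator has matrix elements satisfying a second-order equation, hence \emph{at most} two channels with prescribed exponents; it does not produce a nonzero intertwining operator in either channel, so ``counting its solutions yields the elementary two-channel rules'' silently assumes the existence statement you still owe. The subsequent appeal to associativity of $\boxtimes$ is also not free: for minimal models this is Huang's theorem \cite{H1}, whose proof rests on the same convergence/extension analysis, so you are invoking far heavier machinery than the problem needs while still not closing the existence question. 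The standard repair --- and in effect how Wang's theorem is justified --- is to use the equality form of the Frenkel--Zhu correspondence, valid once $V$ is known to be rational (a theorem of H.~Li): rationality follows from your own Zhu-algebra computation, and then $N_{(m',n'),(m'',n'')}^{(m,n)}=\dim\mathrm{Hom}_{A(V)}\bigl(A(M_{m',n'})\otimes_{A(V)}M_{m'',n''}(0),\,M_{m,n}(0)\bigr)$ exactly, so the bimodule computation alone delivers both bounds and the entire inductive scheme via $M_{2,1},M_{1,2}$ becomes unnecessary. Alternatively one must exhibit the intertwining operators explicitly (e.g.\ by a free-field/screening-operator construction); without one of these two inputs the proposal is incomplete.
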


\subsection{\label{subsec:Braiding-matrix}Braiding matrices}
Now let's recall four point functions. Let $V$ be a rational VOA and $\mathcal{A}=\{M^1, M^2,..., M^m\}$ a set of representatives of equaivalence
class of irreducible $V$-modules. Then we have the following result:
\begin{theorem}\label{huangyizhi1}\cite{H1}
For any modules $M^{a_{1}}, M^{a_{2}}, M^{a_{3}}, M^{a_{4}}$ and $M^{a_{5}}$ and any intertwining operators $\mathcal{Y}_{a_{1},a_{5}}^{a_{4}} \in I_{a_{1}, a_{5}}^{a_{4}}$ and $\mathcal{Y}_{a_{2},a_{3}}^{a_{5}} \in I_{a_{2}, a_{3}}^{a_{5}}$,  there exist $\mathcal{Y}_{a_{2},\mu;i}^{a_{4}}\in I_{a_{2},\mu}^{a_{4}}$ and $\mathcal{Y}_{a_{1},a_{3};i}^{\mu}\in I_{a_{1}, a_{3}}^{\mu}$ for all $M^{\mu} \in \mathcal{A}$ and $i=1, 2, ..., l$
for some $l$
 such that the multi-valued
analytic function
\begin{alignat}{1}
\left\langle u_{a_{4}}',\mathcal{Y}_{a_{1},a_{5}}^{a_{4}}\left(u_{a_{1}},x_{1}\right)\mathcal{Y}_{a_{2},a_{3}}^{a_{5}}\left(u_{a_{2}},x_{2}\right)u_{a_{3}}\right\rangle|_{x_{1}=z_{1}, x_{2}=z_{2}}  \label{Huang1}
\end{alignat}
of $z_{1}$ and $z_{2}$ in the region $|z_{1}|>|z_{2}|>0$ and the multi-valued analytic function
\begin{alignat}{1}
\sum_{\mu}\sum_{i=1}^{l}\left\langle u_{a_{4}}',\mathcal{Y}_{a_{2},\mu;i}^{a_{4}}\left(u_{a_{2}},x_{2}\right)\mathcal{Y}_{a_{1},a_{3};i}^{\mu}\left(u_{a_{1}},x_{1}\right)u_{a_{3}}\right\rangle|_{x_{1}=z_{1}, x_{2}=z_{2}} \label{Huang2}
\end{alignat}
of $z_{1}$ and $z_{2}$ in the region $|z_{2}|>|z_{1}|>0$, are analytic extensions of each other.

\end{theorem}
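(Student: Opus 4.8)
The plan is to prove the statement by the method of differential equations with regular singular points, which is the technical heart of Huang's theory \cite{H1}. Throughout one uses, as is implicit in the hypothesis that $V$ is rational, that $V$ is also $C_2$-cofinite and of CFT type, so that the fusion spaces $I_{i,j}^k$ are finite dimensional and the relevant correlation functions are well behaved. First I would fix homogeneous vectors $u_{a_1}, u_{a_2}, u_{a_3}$ and $u_{a_4}'$ and regard (\ref{Huang1}) as a formal series in $x_1, x_2$. By the lower-truncation condition (1) and the $L(0)$-grading in the definition of an intertwining operator, this series involves only finitely many negative powers and rational exponents in each variable, and its natural domain is $|z_1| > |z_2| > 0$; the first task is to show that, after the substitution $x_i = z_i$, it converges absolutely there to a branch of a multivalued analytic function.

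The mechanism for convergence and for the subsequent continuation is a system of ordinary differential equations. Using $C_2$-cofiniteness one shows that the finite-dimensional quotient $V/C_2(V)$ controls these correlation functions: the space spanned by all series of the form $\langle u_{a_4}', \mathcal{Y}_{a_1,a_5}^{a_4}(u_{a_1},x_1)\mathcal{Y}_{a_2,a_3}^{a_5}(u_{a_2},x_2)u_{a_3}\rangle$, obtained by letting modes of $V$ act on the entries, is finite dimensional modulo the action of $\partial_{x_1}$ and $\partial_{x_2}$. Consequently (\ref{Huang1}) satisfies a system of ODEs in $z_1$, and symmetrically in $z_2$, whose only singular points are $z_1 = 0$, $z_1 = z_2$, and $z_1 = \infty$, and these singularities are regular. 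The indicial equations at these points are governed by the conformal weights of the modules $M^{a_i}$; the classical theory of regular singular points then yields both the convergence of (\ref{Huang1}) in the region $|z_1| > |z_2| > 0$ and the existence of a canonical multivalued analytic continuation.

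Next I would continue the resulting function along a path in the configuration space $\{(z_1,z_2) : z_1 z_2 (z_1 - z_2) \neq 0\}$ from the region $|z_1| > |z_2| > 0$ into the region $|z_2| > |z_1| > 0$, passing around the singular locus $z_1 = z_2$. Since the system has regular singular points, the continuation is again a solution and hence lies in the finite-dimensional solution space adapted to the second region. A basis of solutions there is provided precisely by the iterated products $\langle u_{a_4}', \mathcal{Y}_{a_2,\mu;i}^{a_4}(u_{a_2},x_2)\mathcal{Y}_{a_1,a_3;i}^{\mu}(u_{a_1},x_1)u_{a_3}\rangle$ as $M^{\mu}$ ranges over $\mathcal{A}$ and the operators range over bases of $I_{a_2,\mu}^{a_4}$ and $I_{a_1,a_3}^{\mu}$ (finite dimensional by rationality), with $l$ equal to the number of such basis pairs. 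Expanding the continued function in this basis produces the finite sum (\ref{Huang2}); the matrix of expansion coefficients is exactly the braiding matrix of the present subsection, and the resulting identity is the asserted equality of analytic continuations.

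The main obstacle is the convergence-and-differential-equations step together with the identification of the region-II solution space. Proving from $C_2$-cofiniteness that (\ref{Huang1}) satisfies a system with \emph{regular} rather than irregular singular points, and that the exponents at $z_1 = 0, z_1 = z_2, z_1 = \infty$ are matched to conformal weights, is the technical core. Equally delicate is the converse realizability statement: that every solution with the correct singular exponents near $|z_2| > |z_1|$ is actually a product of genuine intertwining operators with some intermediate module $M^{\mu}$. It is this realizability — the inverse of the passage from intertwining operators to correlation functions — that forces the continuation to be the specific braided sum in (\ref{Huang2}) rather than merely an abstract solution of the system.
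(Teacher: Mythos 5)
The paper does not prove this statement at all: Theorem \ref{huangyizhi1} is quoted from Huang \cite{H1} and used as a black box, so there is no in-paper proof to compare yours against; your proposal has to be judged against the literature it reconstructs, and there it has two genuine gaps. The first is foundational: you open by asserting that rationality ``implicitly'' gives $C_2$-cofiniteness. That implication is a well-known open conjecture, not a routine remark, and the differential-equation machinery you invoke (which in fact comes from Huang's later work on differential equations and intertwining operators, not from \cite{H1}, where the ODEs for minimal models are obtained from null vectors) requires $C_2$-cofiniteness as a separate hypothesis. In the application made in this paper the relevant VOAs are Virasoro minimal models, for which $C_2$-cofiniteness is known directly, but your argument as written claims the general statement for an arbitrary rational $V$ without justification.

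The second gap is the decisive one: the step you yourself label ``realizability'' is the actual content of the theorem, and your proposal only names it. Knowing that the continuation of (\ref{Huang1}) satisfies a system with regular singular points does not identify that continuation as a combination of iterated intertwining operators; for your argument to close you would need that the functions $\langle u_{a_4}',\mathcal{Y}_{a_2,\mu;i}^{a_4}(u_{a_2},x_2)\mathcal{Y}_{a_1,a_3;i}^{\mu}(u_{a_1},x_1)u_{a_3}\rangle$ span the full solution space attached to the region $|z_2|>|z_1|>0$ with the prescribed exponents, a Verlinde-type spanning statement that you nowhere establish and that is strictly stronger than what is needed. Huang's actual route avoids this: he first proves convergence and the associativity (operator product expansion) theorem within the Huang--Lepowsky tensor product theory, and then derives commutativity and the braiding expansion (\ref{Huang2}) constructively from associativity combined with skew-symmetry of intertwining operators. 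Without either the spanning statement or the associativity theorem, your continuation argument produces only an abstract solution of the ODE system, not the asserted sum of products of intertwining operators, so the proof does not close.
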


\begin{remark}\label{huangyizhi2}
For the multi-valued analytic functions in (\ref{Huang1}), (\ref{Huang2}), we can get a single valued branch which is called \emph{prefered branch} in \cite{H2}
and we still use the same notation as (\ref{Huang1}), (\ref{Huang2}). Then we can get a similar result as proposition \ref{extension property}:
\begin{alignat}{1}
& \iota_{12}^{-1}\left\langle u_{a_{4}}',\mathcal{Y}_{a_{1},a_{5}}^{a_{4}}\left(u_{a_{1}},x_{1}\right)\mathcal{Y}_{a_{2},a_{3}}^{a_{5}}\left(u_{a_{2}},x_{2}\right)u_{a_{3}}\right\rangle|_{x_{1}=z_{1}, x_{2}=z_{2}} \nonumber\\
&= \iota_{21}^{-1}\sum_{\mu}\sum_{i=1}^{l}\left\langle u_{a_{4}}',\mathcal{Y}_{a_{2},\mu;i}^{a_{4}}\left(u_{a_{2}},x_{2}\right)\mathcal{Y}_{a_{1},a_{3};i}^{\mu}\left(u_{a_{1}},x_{1}\right)u_{a_{3}}\right\rangle|_{x_{1}=z_{1}, x_{2}=z_{2}} \nonumber
\end{alignat}
we simply denote it as
\begin{alignat}{1}
& \iota_{12}^{-1}\left\langle u_{a_{4}}',\mathcal{Y}_{a_{1},a_{5}}^{a_{4}}\left(u_{a_{1}},z_{1}\right)\mathcal{Y}_{a_{2},a_{3}}^{a_{5}}\left(u_{a_{2}},z_{2}\right)u_{a_{3}}\right\rangle \nonumber\\
&= \iota_{21}^{-1}\sum_{\mu}\sum_{i=1}^{l}\left\langle u_{a_{4}}',\mathcal{Y}_{a_{2},\mu;i}^{a_{4}}\left(u_{a_{2}},z_{2}\right)\mathcal{Y}_{a_{1},a_{3};i}^{\mu}\left(u_{a_{1}},z_{1}\right)u_{a_{3}}\right\rangle \nonumber.
\end{alignat}
\end{remark}

Let $\left\{ \mathcal{Y}_{a,b;i}^{c}|i=1,\cdots,N_{a,b}^{c}\right\} $
be a basis of $I_{a,b}^{c}$. From \cite{H2},

\[
\iota_{12}^{-1}\left\langle u_{a_{4}'},\mathcal{Y}_{a_{1},a_{5};i}^{a_{4}}\left(u_{a_{1}},z_{1}\right)\mathcal{Y}_{a_{2},a_{3};j}^{a_{5}}\left(u_{a_{2}},z_{2}\right)u_{a_{3}}|i=1,\cdots,N_{a_{1},a_{5}}^{a_{4}},j=1,\cdots,N_{a_{2},a_{3}}^{a_{5}},\forall a_{5}\right\rangle
\]
is a linearly independent set. Fix a basis of intertwining operators.
Then by remark \ref{huangyizhi2} there exists $\left(B_{a_{4,}a_{1}}^{a_{3},a_{2}}\right)_{\mu,\gamma}^{i,j;k,l}\in\mathbb{C}$
such that
\begin{alignat}{1}
 & \iota_{12}^{-1}\left\langle u_{a_{4}'},\mathcal{Y}_{a_{3},\mu;i}^{a_{4}}\left(u_{a_{3}},z_{1}\right)\mathcal{Y}_{a_{2},a_{1};j}^{\mu}\left(u_{a_{2}},z_{2}\right)u_{a_{1}}\right\rangle \nonumber \\
 & =\sum_{k,l,\gamma}\left(B_{{a_{4},a_{1}}}^{a_{3},a_{2}}\right)_{\mu,\gamma}^{i,j;k,l} \iota_{21}^{-1}\left\langle u_{a_{4}'},\mathcal{Y}_{a_{2},\gamma;k}^{a_{4}}\left(u_{a_{2}},z_{2}\right)\mathcal{Y}_{a_{3},a_{1};l}^{\gamma}\left(u_{a_{3}},z_{1}\right)u_{a_{1}}\right\rangle \label{braiding matrix property}
\end{alignat}
(see \cite{H2}). $B_{{a_{4},a_{1}}}^{a_{3},a_{2}}$ is called
the\emph{ braiding matrix}.

Now let's recall some formulas about minimal models of Virasoro vertex
operator algebra given in \cite{FFK}.

Let $\alpha_{-}^{2}=\frac{p}{p'}$, $\alpha_{+}^{2}=\frac{p'}{p}$, here $p'=p+1$.
$x=\exp\left(2\pi i\alpha_{+}^{2}\right)$, $y=\exp\left(2\pi i\alpha_{-}^{2}\right)$,
$\left[l\right]=x^{l/2}-x^{-l/2}$, $\left[l\right]'=y{}^{l/2}-y{}^{-l/2}.$ Denote $c_{p}=1-\frac{6}{p\left(p+1\right)}$ with $p=2,3,4,\cdots$, $h_{\left(i'\ i\right)}^{\left(p\right)}=\frac{1}{4}\left(i'^{2}-1\right)\alpha_{-}^{2}-\frac{1}{2}\left(i'i-1\right)+\frac{1}{4}\left(i^{2}-1\right)\alpha_{-}^{-2}=\frac{\left(pi'-\left(p+1\right)i\right)^{2}-1}{4p\left(p+1\right)}$.
Now we fix central charge $c_{p}$, denote $L\left(c_{p},h_{\left(i',i\right)}^{\left(p\right)}\right)$
by $\left(i',i\right)$.  Note that here $\left(i',i\right)$ is the same as $\left(i,i'\right)$ in Theorem \ref{fusion rules of virasoro modules}. Let $\left(a',a\right)$, $\left(m',m\right)$,
$\left(n',n\right)$, $\left(c',c\right)$, $\left(b',b\right)$,
$\left(d',d\right)$ be irreducible $L\left(c_{p},0\right)$-modules,
the braiding matrices of screened vertex operators have the almost
factorized form (cf. (2.19) of \cite{FFK}):
\begin{alignat}{1}
 & \left(B_{\left(m',m\right),\left(n',n\right)}^{\left(a',a\right),\left(c',c\right)}\right){}_{\left(b',b\right),\left(d',d\right)}\nonumber \\
 & =i^{-\left(m'-1\right)\left(n-1\right)-\left(n'-1\right)\left(m-1\right)}\left(-1\right)^{1/2\left(a-b+c-d\right)\left(n'+m\right)+1/2\left(a'-b'+c'-d'\right)\left(n+m\right)}\label{FFK 2.19}\\
 & \cdot r'\left(a',m',n',c'\right)_{b',d'}\cdot r\left(a,m,n,c\right)_{b,d},\nonumber
\end{alignat}
where the nonvanishing matrix elements of $r$-matrices are

\begin{gather}
r\left(a,1,n,c\right)_{a,c}=r\left(a,m,1,c\right)_{c,a}=1,\nonumber \\
r\left(l\pm2,2,2,l\right)_{l\pm1,l\pm1}=x^{1/4},\nonumber \\
r\left(l,2,2,l\right)_{l\pm1,l\pm1}=\mp x^{-1/4\mp l/2}\frac{\left[1\right]}{\left[l\right]},\nonumber \\
r\left(l,2,2,l\right)_{l\pm1,l\mp1}=x^{-1/4}\frac{\left[l\pm1\right]}{\left[l\right]},\label{FFK 2.20}
\end{gather}
and the other $r$-matrices are given by the recursive relation
\begin{gather}
r\left(a,m+1,n,c\right)_{b,d}=\sum_{d_{1}\ge1}r\left(a,2,n,d_{1}\right)_{a_{1},d}\cdot r\left(a_{1},m,n,c\right)_{b,d_{1}},\nonumber \\
r\left(a,m,n+1,c\right)_{b,d}=\sum_{d_{1}\ge1}r\left(a,m,2,c_{1}\right)_{b,d_{1}}\cdot r\left(d_{1},m,n,c\right)_{c_{1},d},\label{FFK 2.21}
\end{gather}
for any choice of $a_{1}$ and $c_{1}$ compatible with the fusion
rules. The $r'$ matrices are given by the same formulas with the
replacement $x\to y$, $\left[\ \ \ \ \right]\to\left[\ \ \ \ \right]'.$

\begin{remark}\label{A5C3pp'} Use the above notation, we see that the central charge of the model $L\left(\frac{25}{28},0\right)$
corresponds to the parameter $\alpha_{-}^{2}=\frac{7}{8}$ with $p=7,$
$p'=8$. The pairs $\left(1,1\right)$, $\left(3,1\right)$
, $\left(5,1\right)$ and $\left(7,1\right)$ correspond to the highest weights 0, $\frac{3}{4}$, $\frac{13}{4}$
and $\frac{15}{2}$ respectively.  The central charge of the model $L\left(\frac{21}{22},0\right)$
corresponds to the parameter $\alpha_{-}^{2}=\frac{11}{12}$ with $p=11,$
$p'=12$. The pairs $\left(1,1\right)$, and $\left(7,1\right)$
correspond to the highest weights 0 and $8$ respectively.
\end{remark}

Now we will prove two lemmas which will be used in the proof of uniqueness of $5A$ and $3C$ algebra.

First we consider braiding matrix for $L\left(\frac{25}{28},0\right)$-modules.
$ P_{2}=L\left(\frac{25}{28},\frac{15}{2}\right)$, $ P_{3}=L\left(\frac{25}{28},\frac{3}{4}\right)$,$ P_{4}=L\left(\frac{25}{28},\frac{13}{4}\right)$ are
irreducible $L\left(\frac{25}{28},0\right)$-modules. For convenience,
we will denote $\left(B_{P_{a},P_{b}}^{P_{c},P_{d}}\right)_{P_{e},P_{f}}$
by $\left(B_{a,b}^{c,d}\right)_{e,f}$, $a,b,c$, $d,$ $e$,
$f$ $\in\left\{ 2,3,4\right\} $.
\begin{lemma} \label{5Anonzero}$\left(B_{3,3}^{4,4}\right)_{4,4}\cdot\left(B_{3,3}^{4,4}\right)_{2,3}-\left(B_{3,3}^{4,4}\right)_{4,3}\cdot\left(B_{3,3}^{4,4}\right)_{2,4}\not=0$, and $\left(B_{3,3}^{4,4}\right)_{3,2}\cdot\left(B_{3,3}^{4,4}\right)_{4,4}-\left(B_{3,3}^{4,4}\right)_{4,2}\cdot\left(B_{3,3}^{4,4}\right)_{3,4}\not=0$. \end{lemma}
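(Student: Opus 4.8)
The plan is to reduce both expressions to $2\times2$ minors of a single Feigin--Fuchs--Khoroshkin $r'$-matrix and then evaluate that matrix. By Remark \ref{A5C3pp'}, $P_2,P_3,P_4$ carry the labels $(7,1),(3,1),(5,1)$, so every module appearing in $B_{3,3}^{4,4}$ has second coordinate $1$. Feeding these labels into the factorized braiding formula (\ref{FFK 2.19}) and using $m=n=a=c=b=d=1$ there, the prefactor $i^{-(m'-1)(n-1)-(n'-1)(m-1)}$ equals $i^{0}=1$, the sign $(-1)^{\frac12(a-b+c-d)(n'+m)+\frac12(a'-b'+c'-d')(n+m)}$ equals $(-1)^{a'-b'+c'-d'}=(-1)^{10-b'-d'}=1$ (as $b',d'$ are odd), and the unprimed factor is $r(1,1,1,1)_{1,1}=1$ by the base row of (\ref{FFK 2.20}). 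Hence
\[
\bigl(B_{3,3}^{4,4}\bigr)_{e,f}=r'(5,3,3,5)_{b',d'},
\]
where $b'$ and $d'$ are the first coordinates of $P_e$ and $P_f$; explicitly $(b',d')$ runs over $\{3,5,7\}^{2}$ as $(e,f)$ runs over $\{2,3,4\}^{2}$. Both claimed quantities are therefore $2\times2$ minors of the single matrix $r'(5,3,3,5)$.

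The second step is to compute the entries $r'(5,3,3,5)_{b',d'}$ actually used, namely $(b',d')\in\{(5,5),(7,3),(5,3),(7,5),(3,7),(5,7),(3,5)\}$. Here $r'$ is the $\alpha_-$-matrix of (\ref{FFK 2.20})--(\ref{FFK 2.21}) with $y=\exp(2\pi i\cdot\tfrac{7}{8})$, so $[l]'=2i\sin(7\pi l/8)$. I would build $r'(5,3,3,5)$ from the base data in (\ref{FFK 2.20}) at $(m',n')=(2,2)$ together with the trivial $m'=1$ and $n'=1$ rows, and then apply the two recursions in (\ref{FFK 2.21}) once each to pass from $(2,2)$ up to $(3,3)$; at every stage the intermediate summation index is restricted by the fusion rules of $L(25/28,0)$ to the odd labels $\{1,3,5,7\}$. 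This yields each needed entry as an explicit rational expression in the brackets $[1]',[2]',[3]',\dots$, hence as an explicit number.

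Finally I would substitute these values into the two displayed combinations, simplify (using $\sin(7\pi l/8)=\sin(\pi l/8)$), and verify that neither vanishes. Note that invertibility of the full $3\times3$ braiding matrix does not by itself force these particular $2\times2$ minors to be nonzero, so the explicit evaluation seems unavoidable. The main obstacle is the bookkeeping in the recursion: choosing the compatible intermediate indices $a_1,c_1$ correctly and checking the asserted independence of that choice, and propagating the fractional powers $x^{\pm1/4},y^{\pm l/2}$ consistently so that no branch ambiguity creeps in. Once the seven entries are in hand I expect the non-vanishing of the two minors to be a short final check, so essentially all of the difficulty lies in evaluating $r'(5,3,3,5)$ cleanly.
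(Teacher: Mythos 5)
Your proposal takes essentially the same route as the paper's proof: both reduce the two expressions to $2\times2$ minors of the single matrix $r'\left(5,3,3,5\right)$ via the factorized form (\ref{FFK 2.19}) (with trivial prefactors, since every second label equals $1$), evaluate the seven required entries through the recursions (\ref{FFK 2.20})--(\ref{FFK 2.21}), and verify non-vanishing by direct computation, the paper obtaining the values $\frac{\sqrt{2}-1}{2}+\frac{3-2\sqrt{2}}{2}i$ and $1+i$ respectively. One small caution for your final simplification step: $\sin\left(\frac{7}{8}\pi l\right)=\sin\left(\frac{\pi}{8}l\right)$ holds only for odd $l$ (the sign flips for even $l$), and even brackets such as $\left[4\right]'$ and $\left[6\right]'$ do appear in the entries.
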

\begin{proof} Using (\ref{FFK 2.19}), (\ref{FFK 2.20}) and Remark \ref{A5C3pp'} , we have $\left(B_{3,3}^{4,4}\right)_{4,4}=r'\left(5,3,3,5\right)_{5,5}$, Using
(\ref{FFK 2.20}) and (\ref{FFK 2.21}) we obtain:

\[
r'\left(5,3,3,5\right)_{5,5}=r'\left(5,2,3,4\right)_{4,5}\cdot r'\left(4,2,3,5\right)_{5,4}+r'\left(5,2,3,6\right)_{4,5}\cdot r'\left(4,2,3,5\right)_{5,6}
\]

with
\begin{alignat*}{1}
r'\left(5,2,3,4\right)_{4,5} & =r'\left(5,2,2,3\right)_{4,4}\cdot r'\left(4,2,2,4\right)_{3,5}=y^{\frac{1}{4}}\cdot y^{-\frac{1}{4}} \frac{\left[3\right]'}{\left[4\right]'}= \frac{\left[3\right]'}{\left[4\right]'},
\end{alignat*}
\begin{alignat*}{1}
r'\left(4,2,3,5\right)_{5,4} & =r'\left(4,2,2,6\right)_{5,5}\cdot r'\left(5,2,2,5\right)_{6,4}=y^{\frac{1}{4}}\cdot y^{-\frac{1}{4}} \frac{\left[6\right]'}{\left[5\right]'}=\frac{\left[6\right]'}{\left[5\right]'},
\end{alignat*}
\begin{alignat*}{1}
r'\left(5,2,3,6\right)_{4,5} & =r'\left(5,2,2,5\right)_{4,4}\cdot r'\left(4,2,2,6\right)_{5,5}+r'\left(5,2,2,5\right)_{4,6}\cdot r'\left(6,2,2,6\right)_{5,5}\\
 & =y^{5/2}\frac{\left[1\right]'\left[6\right]'+\left[4\right]'\left[1\right]'}{\left[5\right]'\left[6\right]'},
\end{alignat*}

\begin{alignat*}{1}
r'\left(4,2,3,5\right)_{5,6}=r'\left(4,2,2,4\right)_{5,5}\cdot r'\left(5,2,2,5\right)_{4,6}=-y^{-5/2}\frac{\left[1\right]'}{\left[5\right]'},
\end{alignat*}

where $\left[l\right]'=2i\sin\left(\frac{7}{8}\pi l\right)$, $y=\exp\left(\frac{7}{4}\pi i\right)$.
Then we have:

\begin{alignat}{1}
 \left(B_{3,3}^{4,4}\right)_{4,4}= r'\left(5,3,3,5\right)_{5,5}=\frac{\left[6\right]'\left[3\right]'}{\left[5\right]'\left[4\right]'}-\frac{\left[1\right]'^2\left(\left[4\right]'+\left[6\right]'\right)}{\left[5\right]'^2\left[6\right]'}\label{A5B334444}
\end{alignat}

Similarly, we can obtain:
\begin{alignat}{1}
 \left(B_{3,3}^{4,4}\right)_{2,3}= r'\left(5,3,3,5\right)_{7,3}=y\frac{\left[6\right]'\left[7\right]'}{\left[4\right]'\left[5\right]'}\label{A5B334423}
\end{alignat}

\begin{alignat}{1}
 \left(B_{3,3}^{4,4}\right)_{4,3}= r'\left(5,3,3,5\right)_{5,3}=y^2\frac{\left[1\right]'\left[6\right]'}{\left[4\right]'\left[5\right]'}\label{A5B334443}
\end{alignat}

\begin{alignat}{1}
 \left(B_{3,3}^{4,4}\right)_{2,4}= r'\left(5,3,3,5\right)_{7,5}=-y^{-3}\left(\frac{\left[1\right]'\left[7\right]'\left(\left[4\right]'+\left[6\right]' \right)}{\left[5\right]'^2\left[4\right]'}+\frac{\left[1\right]'\left[7\right]'\left(\left[5\right]'+\left[7\right]' \right)}{\left[6\right]'^2\left[5\right]'}\right)\label{A5B334424}
\end{alignat}
From (\ref{A5B334444}),  (\ref{A5B334423}),  (\ref{A5B334443}),  (\ref{A5B334424}) and a direct computation, we can obtain:

\begin{alignat}{1}
\left(B_{3,3}^{4,4}\right)_{4,4}\cdot\left(B_{3,3}^{4,4}\right)_{2,3}-\left(B_{3,3}^{4,4}\right)_{4,3}\cdot\left(B_{3,3}^{4,4}\right)_{2,4}
&=\frac{\sqrt{2}-1}{2}+\frac{3-2\sqrt{2}}{2}i\not=0. \label{A5nonzero1}
\end{alignat}

By a similar process, we can obtain:
\begin{alignat}{1}
\left(B_{3,3}^{4,4}\right)_{3,2}\cdot\left(B_{3,3}^{4,4}\right)_{4,4}
&=y^{-1}\frac{\left[4\right]'\left[3\right]'}{\left[6\right]'\left[5\right]'}\left(\frac{\left[6\right]'\left[3\right]'}{\left[5\right]'\left[4\right]'}-\frac{\left[1\right]'^2\left(\left[4\right]'+\left[6\right]'\right)}{\left[5\right]'^2\left[6\right]'} \right) \nonumber \\
&=y^{-1}\left(\sqrt{2}-1\right), \label{A5B33443244}
\end{alignat}

\begin{alignat}{1}
\left(B_{3,3}^{4,4}\right)_{4,2}\cdot\left(B_{3,3}^{4,4}\right)_{3,4}
&=-y^{-1}\frac{\left[1\right]'\left[4\right]'}{\left[5\right]'\left[6\right]'}\left(\frac{\left[1\right]'\left[3\right]'\left(\left[3\right]'+\left[5\right]'\right)}{\left[5\right]'\left[4\right]'^2}-\frac{\left[1\right]'\left[3\right]'\left(\left[4\right]'+\left[6\right]'\right)}{\left[5\right]'^2\left[6\right]'} \right) \nonumber \\
&=-y^{-1}, \label{A5B33444234}
\end{alignat}

From  (\ref{A5B33443244}) and (\ref{A5B33444234}), we have

\begin{alignat}{1}
\left(B_{3,3}^{4,4}\right)_{4,4}\cdot\left(B_{3,3}^{4,4}\right)_{2,3}-\left(B_{3,3}^{4,4}\right)_{4,3}\cdot\left(B_{3,3}^{4,4}\right)_{2,4}
&=\sqrt{2}y^{-1}=1+i\not=0. \label{A5nonzero2}
\end{alignat}

\end{proof}

Then we consider braiding matrix for $L\left(\frac{21}{22},0\right)$-modules.
$ U^{1}=L\left(\frac{21}{22},0\right)$, $ U^{2}=L\left(\frac{21}{22},8\right)$ are
irreducible $L\left(\frac{21}{22},0\right)$-modules. For convenience,
we will denote $\left(B_{U^{a},U^{b}}^{U^{c},U^{d}}\right)_{U^{e},U^{f}}$
by $\left(B_{a,b}^{c,d}\right)_{e,f}$, $a,b,c$, $d,$ $e$,
$f$ $\in\left\{1,2\right\} $. Now we are ready to give the following
lemma.

\begin{lemma} \label{3Cnonzero}$\left(B_{2,2}^{2,2}\right)_{2,1}\not=0$. \end{lemma}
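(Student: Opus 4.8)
The plan is to mirror the computation carried out for Lemma \ref{5Anonzero}, now for the $\left(p,p'\right)=\left(11,12\right)$ minimal model $L\left(\frac{21}{22},0\right)$, reducing the assertion to the nonvanishing of a single screened-vertex-operator $r'$-matrix element. First I would fix the labels via Remark \ref{A5C3pp'}: the two modules are $U^{1}=\left(1,1\right)$ and $U^{2}=\left(7,1\right)$, the parameter is $\alpha_{-}^{2}=\frac{11}{12}$, so $y=\exp\left(\frac{11}{6}\pi i\right)$ and $\left[l\right]'=2i\sin\left(\frac{11}{12}\pi l\right)=2i\left(-1\right)^{l+1}\sin\left(\frac{\pi l}{12}\right)$; in particular $\left[l\right]'\neq0$ for $1\le l\le11$, a fact I will use at the end.

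Since every module here has second component equal to $1$, the factorization in (\ref{FFK 2.19}) simplifies drastically: the accompanying $x$-built matrix is $r\left(1,1,1,1\right)_{1,1}=1$, and the prefactor $i^{-\left(m'-1\right)\left(n-1\right)-\left(n'-1\right)\left(m-1\right)}\left(-1\right)^{\cdots}$ collapses to a root of unity. Feeding in $\left(m',m\right)=\left(n',n\right)=\left(a',a\right)=\left(c',c\right)=\left(b',b\right)=\left(7,1\right)$ and $\left(d',d\right)=\left(1,1\right)$, the $i$-exponent vanishes (all second components are $1$) and the sign exponent reduces to $\frac12\left(a'-b'+c'-d'\right)\left(n+m\right)=\frac12\cdot6\cdot2=6$, so the prefactor is $+1$. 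Hence $\left(B_{2,2}^{2,2}\right)_{2,1}=r'\left(7,7,7,7\right)_{7,1}$, and it suffices to show that this single $r'$-element is nonzero.

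Next I would compute $r'\left(7,7,7,7\right)_{7,1}$ from the base cases (\ref{FFK 2.20}) by iterating the recursions (\ref{FFK 2.21}), exactly as the $5A$ case builds $r'\left(5,3,3,5\right)_{5,5}$ out of $2\times2$ blocks. Concretely I would (i) record the elementary blocks $r'\left(\cdot,2,2,\cdot\right)$, (ii) raise the first degree with the $m$-recursion to get $r'\left(\cdot,m,2,\cdot\right)$ for $m=3,\dots,7$, (iii) raise the second degree with the $n$-recursion to get $r'\left(\cdot,7,n,\cdot\right)$ for $n=3,\dots,7$, and finally read off the $\left(b',d'\right)=\left(7,1\right)$ entry. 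At each summation the intermediate indices run only over values compatible with the Virasoro fusion rules of Theorem \ref{fusion rules of virasoro modules}, which keeps every sum finite and short.

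The last step is to evaluate the resulting expression. If the recursion collapses $r'\left(7,7,7,7\right)_{7,1}$ to a single monomial in the $\left[l\right]'$ times a power of $y$ (as happens for several off-diagonal entries in Lemma \ref{5Anonzero}, e.g. $\left(B_{3,3}^{4,4}\right)_{2,3}=y\frac{\left[6\right]'\left[7\right]'}{\left[4\right]'\left[5\right]'}$), then nonvanishing is immediate since $\left[l\right]'\neq0$ for $1\le l\le11$; if instead it is a genuine sum, I would substitute $\left[l\right]'=2i\left(-1\right)^{l+1}\sin\left(\frac{\pi l}{12}\right)$ and $y=\exp\left(\frac{11}{6}\pi i\right)$ and reduce it to an explicit complex number, as was done in (\ref{A5nonzero1}) and (\ref{A5nonzero2}). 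The main obstacle is exactly the step-(ii)--(iii) bookkeeping: because we must push both degrees up to $7$ (the $3C$ module $\left(7,1\right)$ sits much deeper in the Kac table than the $5A$ module $\left(5,1\right)$), the recursion tree is long, several intermediate indices approach the boundary $l=p'=12$ where $\left[12\right]'=0$ forces terms to drop, and a single sign or index slip would be fatal; I expect the saving grace to be that the target entry, whose t-channel label $d'=1$ is the vacuum channel, telescopes to a short, manifestly nonzero expression.
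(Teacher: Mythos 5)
Your proposal is correct and takes essentially the same route as the paper: the paper's proof is exactly this FFK computation, which implicitly reduces $\left(B_{2,2}^{2,2}\right)_{2,1}$ to the single element $r'\left(7,7,7,7\right)_{7,1}$ (your prefactor computation giving $+1$ is right) and unwinds the recursions (\ref{FFK 2.20})--(\ref{FFK 2.21}) to the explicit product $y^{6}\cdot\frac{\left[1\right]'^{3}}{\left[2\right]'^{3}}\cdot\frac{\left[1\right]'+\left[3\right]'}{\left[3\right]'}\cdot\frac{\left[10\right]'\left[9\right]'\left[8\right]'}{\left[7\right]'\left[6\right]'\left[5\right]'}\cdot\frac{\left[3\right]'\left[4\right]'+\left[1\right]'\left[4\right]'+\left[1\right]'\left[2\right]'}{\left[3\right]'\left[4\right]'}$. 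The telescoping into a short, manifestly nonzero expression that you anticipate from the vacuum channel $d'=1$ is exactly what happens, each factor being nonzero since $\left[l\right]'\neq 0$ for $1\le l\le 11$ and the two sums being visibly nonzero.
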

\begin{proof}
By a careful computation similar to Lemma\ref{5Anonzero}, we can obtain

\begin{alignat*}{1}
\left(B_{2,2}^{2,2}\right)_{2,1}=y^6\cdot\frac{\left[1\right]'^3}{\left[2\right]'^3}\cdot\frac{\left[1\right]'+\left[3\right]'}{\left[3\right]'}
&\cdot \frac{\left[10\right]'\left[9\right]'\left[8\right]'}{\left[7\right]'\left[6\right]'\left[5\right]'}\cdot\frac{\left[3\right]'\left[4\right]'+\left[1\right]'\left[4\right]'+\left[1\right]'\left[2\right]'}{\left[3\right]'\left[4\right]'}\not=0,
\end{alignat*}
where $\left[l\right]'=2i\sin\left(\frac{11}{12}\pi l\right)$, $y=\exp\left(\frac{11}{6}\pi i\right)$.

 \end{proof}

\section{Uniqueness of VOA structure of the $5A$-algebra $\mathcal{U}$}

As in \cite{LYY}, we denote the irreducible module $L\left(\frac{1}{2},h_{1}\right)\otimes L\left(\frac{25}{28},h_{2}\right)\otimes L\left(\frac{25}{28},h_{3}\right)$ by $[h_{1}, h_{2}, h_{3}]$ for simplicity of notation. Let

\begin{eqnarray*}
U^{1}=[0,0,0],\ U^{2}=[0, \frac{15}{2}, \frac{15}{2}],\ U^{3}=[0, \frac{3}{4}, \frac{13}{4}],\ U^{4}=[0, \frac{13}{4}, \frac{13}{4}],\\ U^{5}=[\frac{1}{2}, 0, \frac{15}{2}],\ U^{6}=[\frac{1}{2}, \frac{15}{2}, 0],
\ U^{7}=[\frac{1}{2}, \frac{3}{4}, \frac{3}{4}], \ U^{8}=[\frac{1}{2}, \frac{13}{4}, \frac{13}{4}], \\ U^{9}=[\frac{1}{16}, \frac{5}{32}, \frac{57}{32}], \ U^{10}=[\frac{1}{16}, \frac{57}{32}, \frac{5}{32}], \ U^{11}=[\frac{1}{16}, \frac{57}{32}, \frac{165}{32}], \ U^{12}=[\frac{1}{16}, \frac{165}{32}, \frac{57}{32}].
\end{eqnarray*}
Then

\[
\mathcal{U}\cong U^{1}\oplus U^{2}\oplus ...\oplus U^{11}\oplus U^{12}.
\]
\begin{lemma}\label{U1234}
Let $W=U^{1}+U^{2}+U^{3}+U^{4}$, then $W$ is a subVOA of $\mathcal{U}$ and the VOA structure of $6A$-algebra $\mathcal{U}$ is uniquely determined by $W$.
\end{lemma}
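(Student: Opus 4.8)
The plan is to grade $\mathcal{U}$ by the highest weight $h_{1}$ of its first Virasoro factor $L(1/2,0)$: this takes the value $0$ on $U^{1},\dots,U^{4}$, the value $1/2$ on $U^{5},\dots,U^{8}$, and $1/16$ on $U^{9},\dots,U^{12}$. Write $\mathcal{U}^{1}=U^{5}\oplus\cdots\oplus U^{8}$, $\mathcal{U}^{2}=U^{9}\oplus\cdots\oplus U^{12}$, so that $W$ is exactly the $h_{1}=0$ part and $T:=U^{1}=L(1/2,0)\otimes L(25/28,0)\otimes L(25/28,0)$. For the first assertion I would show $W$ is closed under the vertex operator of $\mathcal{U}$: for $u\in U^{i}$, $v\in U^{j}$ with $i,j\le 4$, the projection of $Y(u,z)v$ onto a summand $U^{k}$ is an intertwining operator of type $\binom{U^{k}}{U^{i}\,U^{j}}$ for the tensor product VOA $T$, and by Proposition \ref{fusion of tensor product} this space factorizes with first tensor factor $I^{L(1/2,h_{1}^{k})}_{L(1/2,0),L(1/2,0)}$, which is nonzero only if $L(1/2,h_{1}^{k})$ occurs in $L(1/2,0)\boxtimes L(1/2,0)=L(1/2,0)$, i.e. only if $h_{1}^{k}=0$. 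Hence every nonzero component of $Y(u,z)v$ lies in $W$; since $W$ contains the vacuum and the conformal vector (both in $U^{1}=T$), it is a subVOA. The same argument, using that $\{0,1/2\}$ is closed under Ising fusion, shows that $\mathcal{U}^{\mathrm{ev}}:=W\oplus\mathcal{U}^{1}=U^{1}\oplus\cdots\oplus U^{8}$ is a subVOA.

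For the uniqueness I would realize $\mathcal{U}$ as two successive $\mathbb{Z}_{2}$-graded simple current extensions. The Miyamoto involution $\tau_{e}$ of the first Ising vector $e$ is an order-two automorphism of $\mathcal{U}$ acting as $+1$ on $\mathcal{U}^{\mathrm{ev}}$ and $-1$ on $\mathcal{U}^{2}$; applying Theorem \ref{classical galois theory} to $\langle\tau_{e}\rangle\cong\mathbb{Z}_{2}$ acting on the simple VOA $\mathcal{U}$ shows $\mathcal{U}^{\mathrm{ev}}=\mathcal{U}^{\tau_{e}}$ is simple and $\mathcal{U}^{2}$ is an irreducible $\mathcal{U}^{\mathrm{ev}}$-module. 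I would then compute quantum dimensions: over $T$ they are products of the Virasoro quantum dimensions by Remark \ref{product property of qdim}, while over an extension $V\supseteq T$ one has $\mathrm{qdim}_{V}M=\mathrm{qdim}_{T}M/\mathrm{qdim}_{T}V$ from the character-ratio definition. A direct evaluation of the $L(1/2,\cdot)$ and $L(25/28,\cdot)$ quantum dimensions gives $\mathrm{qdim}_{T}\mathcal{U}^{\mathrm{ev}}=\mathrm{qdim}_{T}\mathcal{U}^{2}$, hence $\mathrm{qdim}_{\mathcal{U}^{\mathrm{ev}}}\mathcal{U}^{2}=1$. By Proposition \ref{qdim of simple current}(ii) $\mathcal{U}^{2}$ is a simple current $\mathcal{U}^{\mathrm{ev}}$-module, and being the $(-1)$-eigenspace it satisfies $\mathcal{U}^{2}\boxtimes_{\mathcal{U}^{\mathrm{ev}}}\mathcal{U}^{2}=\mathcal{U}^{\mathrm{ev}}$; thus $\mathcal{U}=\mathcal{U}^{\mathrm{ev}}\oplus\mathcal{U}^{2}$ is a $\mathbb{Z}_{2}$-graded simple current extension, and by Remark \ref{simple current extension} its VOA structure is determined by its $\mathcal{U}^{\mathrm{ev}}$-module structure.

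Repeating this one level down, $\sigma_{e}$ is a well-defined order-two automorphism of $\mathcal{U}^{\mathrm{ev}}$ (which has no $1/16$-sector for $e$), with fixed points $W$ and $(-1)$-eigenspace $\mathcal{U}^{1}$; the identical argument gives $\mathrm{qdim}_{W}\mathcal{U}^{1}=1$, so $\mathcal{U}^{\mathrm{ev}}=W\oplus\mathcal{U}^{1}$ is again a $\mathbb{Z}_{2}$-graded simple current extension whose VOA structure is determined by the $W$-module structure. Composing the two steps shows that the VOA structure of $\mathcal{U}$ is uniquely determined by $W$. The main obstacle, and the only place genuine computation enters, is the quantum-dimension bookkeeping producing $\mathrm{qdim}_{\mathcal{U}^{\mathrm{ev}}}\mathcal{U}^{2}=1$ and $\mathrm{qdim}_{W}\mathcal{U}^{1}=1$ together with the squaring relations; I also need $\mathcal{U}$, $\mathcal{U}^{\mathrm{ev}}$ and $W$ to be simple, rational, $C_{2}$-cofinite, self-dual and of CFT type so that Theorem \ref{classical galois theory}, Proposition \ref{qdim of simple current} and Remark \ref{simple current extension} all apply, which follows from the corresponding properties of the rational $C_{2}$-cofinite $T$. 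The braiding-matrix nonvanishing of Lemma \ref{5Anonzero} is not needed for this reduction; it will instead serve at the subsequent stage, where one pins down the structure constants of $W$ itself by commutativity.
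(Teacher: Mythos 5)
Your proposal is correct and follows essentially the same route as the paper: the identical two-step tower $W\subset U^{1}+\cdots+U^{8}\subset\mathcal{U}$, quantum-dimension bookkeeping showing each "odd" summand is a simple current over the even part beneath it (Proposition \ref{qdim of simple current}), and Remark \ref{simple current extension} to conclude that the VOA structure of $\mathcal{U}$ is determined by $W$. The extra scaffolding you supply --- Miyamoto involutions plus quantum Galois theory for simplicity and irreducibility, and Proposition \ref{fusion of tensor product} for closure under the vertex operators --- only spells out steps the paper derives more tersely from the fusion rules of $L\left(\frac{1}{2},0\right)$- and $L\left(\frac{25}{28},0\right)$-modules.
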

\begin{proof}
By Remark \ref{product property of qdim} and some results in \cite{DJX}, The quantum dimensions of $U^{i}$, $i=1,2,...,11,12$ as $U^{1}$ modules are as follows:

\[
q\dim_{U^{1}}U^{1}=q\dim_{U^{1}}U^{2}=q\dim_{U^{1}}U^{5}=q\dim_{U^{1}}U^{6}=1,
\]
\[
 q\dim_{U^{1}}U^{3}=q\dim_{U^{1}}U^{4}=q\dim_{U^{1}}U^{7}=q\dim_{U^{1}}U^{8}=\left(\frac{\sin(\frac{3\pi}{8})}{\sin(\frac{\pi}{8})}\right)^2,
 \]
 \[
 q\dim_{U^{1}}U^{9}=q\dim_{U^{1}}U^{10}=q\dim_{U^{1}}U^{11}=q\dim_{U^{1}}U^{12}=\frac{1}{\sin^2(\frac{\pi}{8})}.
\]

By the quantum dimensions above and the fusion rules of $L\left(\frac{1}{2},0\right)$ modules and $L\left(\frac{25}{28},0\right)$ modules, we can get that $U^{1}+U^{2}+...+U^{7}+U^{8}$ is a subVOA of $\mathcal{U}$, $U^{9}+U^{10}+U^{11}+U^{12}$ is a simple module of $U^{1}+U^{2}+...+U^{7}+U^{8}$ and $q\dim_{U^{1}+...+U^{8}}U^{9}+...+U^{12}=1$. By proposition \ref{qdim of simple current},
$U^{9}+U^{10}+U^{11}+U^{12}$ is a simple current module of $U^{1}+U^{2}+...+U^{7}+U^{8}$. Simialrly we can show that $U^{1}+U^{2}+U^{3}+U^{4}$ is a subVOA of $U^{1}+U^{2}+...+U^{7}+U^{8}$, $U^{5}+U^{6}+U^{7}+U^{8}$ is a simple current module of $U^{1}+U^{2}+U^{3}+U^{4}$. Hence $W$ is a subVOA of $\mathcal{U}$ and by Remark \ref{simple current extension}, the VOA structure of $6A$-algebra $\mathcal{U}$ is uniquely determined by $W$.
\end{proof}

 \begin{remark}\label{5Auniqueremark}
 Lemma \ref{U1234} tells us in order to prove uniqueness of VOA structure on $\mathcal{U}$, we only need to show the uniqueness of VOA structure on $W=U^{1}+U^{2}+U^{3}+U^{4}$. On the other hand, $W=L\left(\frac{1}{2},0\right)\otimes U$, here $U=L\left(\frac{25}{28},0\right)\otimes L\left(\frac{25}{28},0\right)+L\left(\frac{25}{28},\frac{15}{2}\right)\otimes L\left(\frac{25}{28},\frac{15}{2}\right)+L\left(\frac{25}{28},\frac{3}{4}\right)\otimes L\left(\frac{25}{28},\frac{13}{4}\right)+L\left(\frac{25}{28},\frac{13}{4}\right)\otimes L\left(\frac{25}{28},\frac{3}{4}\right)$. Now we only need to prove uniqueness of VOA structure on $U$. \end{remark}

\begin{remark} \label{self-dual}Since $U_{1}=0$ and $\dim U_{0}=1$
by Theorem \ref{bilinear form}, there is a unique bilinear form on
$U$ and thus $U'\cong U$. Without loss
of generality, we can identify $U$ with $U'$.\end{remark}

For convinience, we still use $U^{1}, U^{2}, U^{3}, U^{4}$ to denote $L\left(\frac{25}{28},0\right)\otimes L\left(\frac{25}{28},0\right), L\left(\frac{25}{28},\frac{15}{2}\right)\otimes L\left(\frac{25}{28},\frac{15}{2}\right), L\left(\frac{25}{28},\frac{3}{4}\right)\otimes L\left(\frac{25}{28},\frac{13}{4}\right), L\left(\frac{25}{28},\frac{13}{4}\right)\otimes L\left(\frac{25}{28},\frac{3}{4}\right)$ respectively.\\

we denote
\begin{gather}
P_{1}=Q_{1}=L\left(\frac{25}{28},0\right), \ P_{2}=Q_{2}=L\left(\frac{25}{28},\frac{15}{2}\right), \nonumber\\
P_{3}=Q_{4}=L\left(\frac{25}{28},\frac{3}{4}\right), \ P_{4}=Q_{3}=L\left(\frac{25}{28},\frac{13}{4}\right) \nonumber
\end{gather}
 and $U^{i}=P_{i}\otimes Q_{i}$, $i=1,2,3,4$. Then
\[
U\cong P_{1}\otimes Q_{1}\oplus P_{2}\otimes Q_{2}\oplus P_{3}\otimes Q_{3}\oplus P_{4}\otimes Q_{4}=U^{1}\oplus U^{2}\oplus U^{3}\oplus U^{4}.
\]

For convenience, we list the following tables of fusion rules

\begin{center}
\begin{tabular}{|c|c|c|c|}
\hline
$P_{1}$  & $P_{2}$  & $P_{3}$ & $P_{4}$\tabularnewline
\hline
\hline
$P_{2}$  & $P_{1}$  & $P_{4}$ & $P_{3}$\tabularnewline
\hline
$P_{3}$  & $P_{4}$  & $P_{1}+P_{3}+P_{4}$  & $P_{2}+P_{3}+P_{4}$\tabularnewline
\hline
$P_{4}$  & $P_{3}$  & $P_{2}+P_{3}+P_{4}$  & $P_{1}+P_{3}+P_{4}$\tabularnewline
\hline
\end{tabular}\label{Fusion rules for P_i's}
\par\end{center}

\begin{center}
\begin{tabular}{|c|c|c|c|}
\hline
$U^{1}$  & $U^{2}$  & $U^{3}$ & $U^{4}$\tabularnewline
\hline
\hline
$U^{2}$  & $U^{1}$  & $U^{4}$ & $U^{3}$\tabularnewline
\hline
$U^{3}$  & $U^{4}$  & $U^{1}+U^{3}+U^{4}$  & $U^{2}+U^{3}+U^{4}$\tabularnewline
\hline
$U^{4}$  & $U^{3}$  & $U^{2}+U^{3}+U^{4}$  & $U^{1}+U^{3}+U^{4}$\tabularnewline
\hline
\end{tabular}\label{Fusion rules for U^i's}
\par\end{center}

Let $\left(U,Y\right)$ be a vertex operator algebra structure
on $U$ with

\[
Y\left(u,z\right)=\sum_{a,b,c\in\left\{ 1,2,3,4\right\} }\text{\ensuremath{\lambda}}_{a,b}^{c}\cdot\mathcal{I}_{a,b}^{c}\left(u^{a},z\right)u^{b}
\]
where $\mathcal{I}_{a,b}^{c}$, $a,b,c\in\left\{ 1,2,3,4\right\} $ is a basis of $ I_{U^{1}}\left(_{U^{a}\ U^{b}}^{U^{c}}\right)$ . Furthermore, for each $u^{a}\in U^{a}$
we write $u^{a}=u_{1}^{a}\otimes u_{2}^{a}$ where $u_{1}^{a}\in P_{a}$
and $u_{2}^{a}\in Q_{a}$, $\mathcal{I}_{a,b}^{c}=\mathcal{Y}_{a,b}^{c}\otimes\overline{\mathcal{Y}}_{a,b}^{c}$
where $\mathcal{Y}_{a,b}^{c}\in I_{P_{1}}\left(_{P_{a}\ P_{b}}^{P_{c}}\right)$,
$\overline{\mathcal{Y}}_{a,b}^{c}\in I_{Q_{1}}\left(_{Q_{a}\ Q_{b}}^{Q_{c}}\right)$
with $a,b,c\in\left\{ 1,2,3,4\right\} $.

\begin{lemma} \label{A5nonzerolemma}$\lambda_{a,b}^{c}\not=0$ if $N_{a,b}^{c}=\dim I_{U^{1}}\left(_{U^{a}\ U^{b}}^{U^{c}}\right)\not=0$.\label{lemmaunique}
 \end{lemma}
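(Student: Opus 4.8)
The plan is to show that every fusion-allowed coefficient $\lambda_{a,b}^{c}$ is nonzero by propagating nonvanishing outward from the ``vacuum'' coefficients, using the commutativity of $Y$ and letting the braiding matrix of Lemma \ref{5Anonzero} govern the propagation. First I would dispose of the easy cases and set up the symmetries. The identity axiom $Y(\mathbf{1},z)=\mathrm{id}_{U}$ and the creation property give $\lambda_{1,b}^{b}\neq 0$ and $\lambda_{a,1}^{a}\neq 0$ for all $a,b$, which settles every triple of the shape $\{1,j,j\}$. Since $U$ is self-dual and each $U^{i}$ is a self-dual module (Remark \ref{self-dual}), the invariant bilinear form together with the skew-symmetry of the vertex operator shows that the nonvanishing of $\lambda_{a,b}^{c}$ is invariant under every permutation of the triple $(a,b,c)$: skew-symmetry (a consequence of commutativity, Proposition \ref{extension property}(a)) gives $\lambda_{a,b}^{c}\neq 0\iff\lambda_{b,a}^{c}\neq 0$, while the adjoint relation furnished by the invariant form gives $\lambda_{a,b}^{c}\neq 0\iff\lambda_{a,c}^{b}\neq 0$. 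Reading off the fusion table for the $U^{i}$ (all of whose fusion numbers satisfy $N_{a,b}^{c}\le 1$), this reduces the lemma to the five symmetric orbits $\{2,3,4\}$, $\{3,3,3\}$, $\{3,3,4\}$, $\{3,4,4\}$, $\{4,4,4\}$.

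For these I would impose commutativity of $Y$ on carefully chosen matrix coefficients and exploit the factorization $\mathcal{I}_{a,b}^{c}=\mathcal{Y}_{a,b}^{c}\otimes\overline{\mathcal{Y}}_{a,b}^{c}$. Substituting $Y=\sum\lambda_{a,b}^{c}\mathcal{I}_{a,b}^{c}$ into a four-point function such as $\langle(u^{4})',Y(u^{4},z_{1})Y(u^{4},z_{2})u^{3}\rangle$ and its analogues obtained by permuting the labels $3,4$, each coefficient splits into a $P$-factor and a $Q$-factor. Applying the braiding relation (\ref{braiding matrix property}) to each factor and matching the two orderings of the product turns commutativity into a small linear system whose unknowns are the products $\lambda_{a,b}^{\mu}\lambda_{\mu,\cdots}^{\cdots}$ and whose coefficient matrix is assembled from the entries $(B_{3,3}^{4,4})_{e,f}$ of the $L\left(\frac{25}{28},0\right)$-braiding matrix. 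The two $2\times2$ determinants shown nonzero in Lemma \ref{5Anonzero} are precisely the minors that make the relevant $2\times2$ blocks of this system invertible; hence a product already known to be nonzero on one side (ultimately traceable back to a vacuum coefficient) forces the unknown $\lambda$'s on the other side to be nonzero. Ordering the orbits so that each computation feeds on the previously established ones then yields all five; the orbit $\{2,3,4\}$ can be reached last, since $U^{2}$ has quantum dimension $1$ and so acts as a simple current interchanging $U^{3}$ and $U^{4}$.

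The main obstacle I anticipate is bookkeeping rather than any conceptual gap. One must keep the twisted identification $P_{3}=Q_{4}$, $P_{4}=Q_{3}$ straight so that the $P$- and $Q$-braiding data are not conflated; choose, for each orbit, a four-point function whose commutativity isolates exactly the $2\times2$ block controlled by Lemma \ref{5Anonzero}; and verify in each equation that the ``source'' product on the opposite side is genuinely one already known to be nonzero, so that the induction never stalls. The fact that every relevant intertwining space is at most one-dimensional keeps the linear systems small and each $\lambda_{a,b}^{c}$ a single scalar, but correctly pairing the braiding entries with the coefficient products is the delicate point on which the argument turns.
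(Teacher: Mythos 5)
Your overall toolkit is the paper's: vacuum coefficients $\lambda_{k,1}^{k}$, $\lambda_{k,k}^{1}$ from skew-symmetry and the invariant form, then commutativity/braiding on four-point functions with Lemma \ref{5Anonzero} supplying the nonzero $2\times 2$ minors. But your proposed ordering of the orbits contains a genuine logical flaw. The braiding systems that Lemma \ref{5Anonzero} controls come from the four-point function on $\left(U^{3},U^{4},U^{4},U^{3}\right)$, whose three intermediate channels carry exactly the products $\lambda_{4,2}^{3}\lambda_{4,3}^{2}$, $\left(\lambda_{4,3}^{3}\right)^{2}$ and $\lambda_{4,4}^{3}\lambda_{4,3}^{4}$ --- no vacuum-channel coefficient appears. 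Consequently the resulting linear system is \emph{homogeneous} in these products and is trivially satisfied if all of them vanish; it can force nothing unless one of the three, namely the $\{2,3,4\}$-orbit product $\lambda_{4,2}^{3}\lambda_{4,3}^{2}$, is already known to be nonzero. So the orbit $\{2,3,4\}$ cannot be ``reached last'': it is the indispensable source for every braiding step, and your induction stalls at its very first non-vacuum orbit. (You flag precisely this ``source'' issue as the delicate point, but the ordering you then choose runs it backwards.)

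The fix is the paper's Claim 3: the orbit $\{2,3,4\}$ is handled \emph{before} any braiding, by elementary commutativity alone. Because the fusion rules $N_{2,3}^{4}=N_{2,4}^{3}=N_{3,4}^{2}=1$ make both orderings of the relevant four-point functions single-channel (e.g.\ $\left\langle v^{2},Y\left(u^{2},z_{1}\right)Y\left(u^{3},z_{2}\right)v^{3}\right\rangle$ has the unique channel $U^{4}$ one way and $U^{1}$ the other), commutativity directly equates $\lambda_{3,4}^{2}\lambda_{2,3}^{4}$ with the vacuum-sourced product $\lambda_{2,1}^{2}\lambda_{3,3}^{1}\neq 0$, with no braiding matrix needed. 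Only after that does the paper run the $\left(U^{3},U^{4},U^{4},U^{3}\right)$ braiding system, deriving a contradiction from $\lambda_{4,3}^{3}=0$ (or $\lambda_{4,4}^{3}=0$) precisely because Lemma \ref{5Anonzero} then forces the known-nonzero product $\lambda_{4,2}^{3}\lambda_{4,3}^{2}$ to vanish. Your appeal to $U^{2}$ having quantum dimension $1$ is not a substitute: the simple-current property of $U^{2}$ concerns fusion products of modules and says nothing about whether the structure constants $\lambda_{2,3}^{4}$, $\lambda_{2,4}^{3}$, $\lambda_{3,4}^{2}$ of a putative VOA structure vanish. With the orbits reordered ($\{2,3,4\}$ first, then the braiding orbits) and the simple-current remark replaced by the single-channel commutativity argument, your outline becomes the paper's proof.
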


\begin{claim}
$\lambda_{k,1}^{k}\not=0,\forall k=2,3,4$.
\end{claim}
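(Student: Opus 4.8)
The plan is to read off the claim directly from the creation (vacuum) axiom of the VOA structure $(U,Y)$, using only that $U^1$ is the fusion identity. Write $\mathbf{1}$ for the vacuum of $U$. Since $U_0$ is one-dimensional and lies in $U^1=L\left(\frac{25}{28},0\right)\otimes L\left(\frac{25}{28},0\right)$ (the component containing the weight-zero vector), we have $\mathbf{1}\in U^1$. Now fix $k\in\{2,3,4\}$ and a nonzero $u^k\in U^k$, and specialize the defining formula
\[
Y\left(u,z\right)=\sum_{a,b,c\in\{1,2,3,4\}}\lambda_{a,b}^{c}\,\mathcal{I}_{a,b}^{c}\left(u^{a},z\right)u^{b}
\]
to the pair $\left(u^{k},\mathbf{1}\right)$. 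Because $u^k$ is homogeneous in the $G$-grading, only $a=k$ survives, and because $\mathbf{1}$ has nonzero component only in $U^1$, only $b=1$ survives.

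First I would use the fusion table for the $U^i$ to eliminate all but one channel: it gives $U^k\boxtimes U^1=U^k$, hence $N_{k,1}^c=\delta_{c,k}$, so the only nonzero space among the $I_{U^1}\left(_{U^{k}\ U^{1}}^{U^{c}}\right)$ is the one with $c=k$. Thus the triple sum collapses to the single term
\[
Y\left(u^{k},z\right)\mathbf{1}=\lambda_{k,1}^{k}\,\mathcal{I}_{k,1}^{k}\left(u^{k},z\right)\mathbf{1}.
\]
Next I would apply the creation property $Y\left(v,z\right)\mathbf{1}=e^{zL(-1)}v$, valid for every $v\in U$ and in particular for $v=u^{k}$, to obtain
\[
e^{zL(-1)}u^{k}=\lambda_{k,1}^{k}\,\mathcal{I}_{k,1}^{k}\left(u^{k},z\right)\mathbf{1}.
\]
The left-hand side is nonzero, since its constant term in $z$ is $u^{k}\neq 0$. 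Therefore $\lambda_{k,1}^{k}\neq 0$ (and, as a byproduct, $\mathcal{I}_{k,1}^{k}\left(u^{k},z\right)\mathbf{1}\neq 0$), which is exactly the claim.

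There is essentially no computational obstacle in this claim: it is the base case recording that the component of $Y$ responsible for the action of $U^k$ on the vacuum cannot vanish, and it follows formally from the vacuum axiom. The only points needing care are confirming that $\mathbf{1}$ indeed lies in $U^1$ (so that the relevant channel index is $b=1$) and reading off from the fusion table that $c=k$ is the unique surviving channel, which rules out any cancellation among several intertwining operators. The genuinely nontrivial work—invoking commutativity together with the braiding-matrix nonvanishing established in Lemma \ref{5Anonzero}—will be required only for the remaining components $\lambda_{a,b}^{c}$ handled in the subsequent claims.
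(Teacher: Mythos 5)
Your proof is correct, but it takes a different route from the paper's. The paper never touches the vacuum directly: it applies skew symmetry, $Y(u^{k},z)u^{1}=e^{zL(-1)}Y(u^{1},-z)u^{k}$, rewrites the right-hand side as $\lambda_{1,k}^{k}\,e^{zL(-1)}\mathcal{I}_{1,k}^{k}(u^{1},-z)u^{k}$, and argues that $\lambda_{1,k}^{k}\neq 0$ because $U^{k}$ is an irreducible (in particular nonzero) $U^{1}$-module under the restriction of $Y$; nonvanishing of $\lambda_{k,1}^{k}$ then follows by comparing the two sides. You instead specialize the second argument to $\mathbf{1}$, collapse the triple sum using the grading and the fusion rule $N_{k,1}^{c}=\delta_{c,k}$, and invoke the creation property $Y(u^{k},z)\mathbf{1}=e^{zL(-1)}u^{k}\neq 0$. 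The two mechanisms are close cousins: the creation property is precisely the paper's skew-symmetry identity specialized at $u^{1}=\mathbf{1}$ combined with $Y(\mathbf{1},z)=\mathrm{id}$. What your version buys is economy — it needs neither the irreducibility of $U^{k}$ (only that $u^{k}\neq 0$) nor a separate argument that $\lambda_{1,k}^{k}\neq 0$ — while the paper's version yields that auxiliary nonvanishing $\lambda_{1,k}^{k}\neq 0$ for free as a byproduct. Your two stated points of care (that $\mathbf{1}\in U^{1}$, and that $c=k$ is the unique surviving channel) are exactly what legitimizes the collapse, so the argument is complete as written.
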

\begin{proof}
For any $u^{k}\in U^{k}$, $k=1,2,3,4$, using skew symmetry of $Y\left(\cdot,z\right)$
(\cite{FHL}), we have

\[
Y(u^{k},z)u^{1}=e^{zL\left(-1\right)}Y\left(u^{1},-z\right)u^{k}=\lambda_{1,k}^{k}\cdot e^{zL\left(-1\right)}\mathcal{I}_{1,k}^{k}\left(u^{1},-z\right)u^{k}=\lambda_{k,1}^{k}\cdot\mathcal{I}_{k,1}^{k}\left(u^{k},z\right)u^{1}.
\]
Since $U^{k}$ is an irreducible $U^{1}$-module, we have $\lambda_{1,k}^{k}\not=0$,
$\forall k=1,2,3,4$. So $\lambda_{k,1}^{k}\not=0$, $\forall k=2,3,4.$
\end{proof}

\begin{claim}
$\lambda_{k,k}^{1}\not=0$, $\forall k=2,3,4$.
\end{claim}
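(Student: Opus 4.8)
The plan is to argue by contradiction using the non-degenerate invariant bilinear form on $U$. By Remark \ref{self-dual} the form $\langle\cdot,\cdot\rangle$ on $U$ exists and we identify $U$ with $U'$; moreover each summand $U^{k}$ is a tensor product of (self-dual) Virasoro minimal model modules, so $(U^{k})'\cong U^{k}$, and since $U^{1},U^{2},U^{3},U^{4}$ are pairwise inequivalent irreducibles the form is ``block diagonal'', pairing $U^{k}$ only with itself and doing so non-degenerately. In particular the form is non-degenerate on the lowest weight space $U^{k}_{h_{k}}$, where $h_{2}=15$ and $h_{3}=h_{4}=4$. From the fusion table for the $U^{i}$ we have $N_{k,k}^{1}=1$ for $k=2,3,4$, so the $U^{1}$-component of $Y(u^{k},z)v^{k}$ is exactly $\lambda_{k,k}^{1}\,\mathcal{I}_{k,k}^{1}(u^{k},z)v^{k}$.

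Next I would pair $Y(u^{k},z)v^{k}$ against the vacuum. Since $\langle X,\mathbf{1}\rangle$ is nonzero only when $X\in U_{0}=\mathbb{C}\mathbf{1}\subseteq U^{1}$, and since $U^{2},U^{3},U^{4}$ have strictly positive conformal weights and hence no weight-zero vectors, only the $U^{1}$-term contributes, giving
\[
\langle Y(u^{k},z)v^{k},\mathbf{1}\rangle=\lambda_{k,k}^{1}\,\langle\mathcal{I}_{k,k}^{1}(u^{k},z)v^{k},\mathbf{1}\rangle .
\]
On the other hand, applying the invariance property of the form (the $\mbox{FHL}$ relation recalled before Proposition \ref{extension property}) together with the creation property $Y(a,z^{-1})\mathbf{1}=e^{z^{-1}L(-1)}a$, the same quantity equals $\langle v^{k},\,e^{z^{-1}L(-1)}e^{zL(1)}(-z^{-2})^{L(0)}u^{k}\rangle$. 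Choosing $u^{k},v^{k}$ in the lowest weight space $U^{k}_{h_{k}}$, so that $L(1)u^{k}=0$, the operators $e^{zL(1)}$ and $e^{z^{-1}L(-1)}$ act trivially on the lowest graded piece and only $(-z^{-2})^{L(0)}$ survives at weight $h_{k}$, yielding $(-z^{-2})^{h_{k}}\langle v^{k},u^{k}\rangle$.

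Combining the two expressions, $\lambda_{k,k}^{1}\,\langle\mathcal{I}_{k,k}^{1}(u^{k},z)v^{k},\mathbf{1}\rangle=(-z^{-2})^{h_{k}}\langle v^{k},u^{k}\rangle$. If $\lambda_{k,k}^{1}=0$ then the left side vanishes identically, forcing $\langle v^{k},u^{k}\rangle=0$ for all lowest weight $u^{k},v^{k}\in U^{k}_{h_{k}}$, which contradicts the non-degeneracy of the form on $U^{k}_{h_{k}}$ established in the first paragraph. Hence $\lambda_{k,k}^{1}\neq0$ for $k=2,3,4$. I expect the only delicate point to be the bookkeeping that $\langle\cdot,\mathbf{1}\rangle$ detects precisely the $U^{1}$-component (which rests on the positivity of the conformal weights of $U^{2},U^{3},U^{4}$) and the careful extraction of the lowest-weight pairing from the $e^{zL(1)}(-z^{-2})^{L(0)}$ operators; both are routine once one restricts to primary vectors annihilated by $L(1)$.
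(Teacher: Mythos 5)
Your proof is correct and takes essentially the same route as the paper: both arguments rest on the unique invariant bilinear form of Remark \ref{self-dual} together with the FHL invariance identity, with the pairing against $U^{1}$ isolating the $\lambda_{k,k}^{1}\,\mathcal{I}_{k,k}^{1}$-term. The only differences are cosmetic --- you specialize to $u^{1}=\mathbf{1}$ and lowest-weight vectors and use the creation property $Y(a,z^{-1})\mathbf{1}=e^{z^{-1}L(-1)}a$, whereas the paper pairs against a general $u^{1}\in U^{1}$ and invokes Claim 1 ($\lambda_{k,1}^{k}\neq 0$); your version has the small merit of making explicit the block-diagonal nondegeneracy bookkeeping that the paper leaves implicit.
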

\begin{proof}
By Remark \ref{self-dual}, $U$ has a unique invariant bilinear form $\left\langle \cdot,\cdot\right\rangle $
with $\left\langle 1,1\right\rangle =1$. For $u^{k},v^{k}\in U^{k},$
$k=1,2,3,4$, we have
\[
\left\langle Y\left(u^{k},z)v^{k}\right),u^{1}\right\rangle =\left\langle v^{k},Y\left(e^{zL\left(-1\right)}\left(-z^{-2}\right)^{L\left(0\right)}u^{k},z^{-1}\right)u^{1}\right\rangle .
\]
That is,
\[
\left\langle \lambda_{k,k}^{1}\cdot\mathcal{I}_{k,k}^{1}\left(u^{k},z\right)v^{k},u^{1}\right\rangle =\left\langle v^{k},\lambda_{k,1}^{k}\cdot\mathcal{I}_{k,1}^{k}\left(e^{zL\left(-1\right)}\left(-z^{-2}\right)^{L\left(0\right)}u^{k},z^{-1}\right)u^{1}\right\rangle .
\]
Applying previous claim, $\lambda_{k,1}^{k}\not=0$, hence $\lambda_{k,k}^{1}\not=0$,
$\forall k=2,3,4.$
\end{proof}

\begin{claim}
\label{lambda_2,3,2; 2,3,3; 3,3,2; 2,2,3; nonzero}\emph{ $\lambda_{3,2}^{4}$,
$\lambda_{2,3}^{4}$, $\lambda_{2,4}^{3},\lambda_{4,2}^{3}$, $\lambda_{3,4}^{2},\lambda_{4,3}^{2}$ are all nonzero .
}
\end{claim}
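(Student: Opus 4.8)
The plan is to exploit commutativity of the genuine vertex operator $Y$ (Proposition \ref{extension property}(a)) together with the braiding relation (\ref{braiding matrix property}) of Remark \ref{huangyizhi2}. Writing $Y=\sum\lambda_{a,b}^{c}\mathcal{I}_{a,b}^{c}$ with $\mathcal{I}_{a,b}^{c}=\mathcal{Y}_{a,b}^{c}\otimes\overline{\mathcal{Y}}_{a,b}^{c}$, any four-point function of $Y$ factors as a product of a conformal block for the $P_{\bullet}=L(\frac{25}{28},0)$-modules and one for the $Q_{\bullet}=L(\frac{25}{28},0)$-modules, and commuting the two vertex operators amounts to applying the braiding matrix separately in each tensor factor. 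The combined braiding coefficient attached to a given intermediate channel is therefore a product of two $L(\frac{25}{28},0)$-braiding entries of exactly the kind tabulated in Lemma \ref{5Anonzero}.

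First I would reduce the six constants to three using skew-symmetry, as in the first claim above: since $N_{3,2}^{4}=N_{2,3}^{4}=1$, and similarly for the other pairs, the relation $Y(u,z)v=e^{zL(-1)}Y(v,-z)u$ shows $\lambda_{3,2}^{4}\neq0\iff\lambda_{2,3}^{4}\neq0$, $\lambda_{2,4}^{3}\neq0\iff\lambda_{4,2}^{3}\neq0$, and $\lambda_{3,4}^{2}\neq0\iff\lambda_{4,3}^{2}\neq0$. It then suffices to treat one member of each pair. Next I would apply commutativity to the four-point function $\left\langle (u^{3})',Y(u^{4},z_{1})Y(u^{4},z_{2})u^{3}\right\rangle$ projected onto $U^{3}$: by the fusion table its intermediate channels are exactly $U^{2},U^{3},U^{4}$, so the relevant $P$-sector braiding is precisely the $3\times3$ matrix $(B_{3,3}^{4,4})$ of Lemma \ref{5Anonzero}. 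Expanding both orderings and matching the coefficients of the linearly independent iterated intertwining operators (Theorem \ref{huangyizhi1}) yields a linear system whose unknowns are the bilinear products $\lambda_{4,3}^{\mu}\lambda_{4,\mu}^{3}$, $\mu=2,3,4$, and whose coefficient array is the factorized braiding matrix. The two nonvanishing $2\times2$ minors supplied by Lemma \ref{5Anonzero}, combined with the non-vanishing of $\lambda_{k,1}^{k}$ and $\lambda_{k,k}^{1}$ established in the previous two claims, then force the product $\lambda_{4,3}^{2}\lambda_{4,2}^{3}$ — and hence both $\lambda_{4,2}^{3}$ and $\lambda_{4,3}^{2}$ — to be nonzero; an analogous four-point function, in which the ``simple-current'' channel $U^{3}\boxtimes U^{2}=U^{4}$ is isolated, disposes of the remaining pair $\lambda_{3,2}^{4},\lambda_{2,3}^{4}$. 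I expect to run this as a proof by contradiction: assume a target constant vanishes, propagate the consequence through the matched coefficients, and deduce that one of the minors of Lemma \ref{5Anonzero} would have to vanish, contradicting that lemma.

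The main obstacle will be the bookkeeping of the factorized braiding. One must correctly identify which $L(\frac{25}{28},0)$-braiding entry occurs in each of the $P_{\bullet}$ and $Q_{\bullet}$ tensor factors for every intermediate channel, and then reconcile the $Q$-sector braiding — which a priori is $(B_{4,4}^{3,3})$ rather than $(B_{3,3}^{4,4})$ — with the single matrix actually computed in Lemma \ref{5Anonzero}, either by a symmetry of the braiding matrices or by choosing the highest-weight vectors so that the $Q$-sector contributes only a common nonzero scalar on the relevant channels. Once this reduction is in place, the argument is forced: the whole point is that the non-degeneracy recorded in Lemma \ref{5Anonzero} is exactly the input that prevents the structure constants from collapsing to zero, and the rest is the organizational task of keeping the independent conformal blocks — hence the legitimate coefficient comparisons — straight.
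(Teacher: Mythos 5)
Your first step (skew symmetry pairs the six constants) matches the paper, but the central mechanism you propose has a genuine gap, and it is a gap of logical order rather than bookkeeping. The correlator you select, $\langle (u^{3})',Y(u^{4},z_{1})Y(u^{4},z_{2})u^{3}\rangle$, produces exactly the paper's system (\ref{A5nonzerosystem}), whose unknowns are $A=\lambda_{4,2}^{3}\lambda_{4,3}^{2}$, $B=\lambda_{4,3}^{3}\lambda_{4,3}^{3}$, $C=\lambda_{4,4}^{3}\lambda_{4,3}^{4}$. The constants $\lambda_{k,1}^{k}$ and $\lambda_{k,k}^{1}$ from the two preceding claims simply do not occur in this four-point function, so there is no mechanism by which they can be ``combined'' with the minors of Lemma \ref{5Anonzero} to force $A\neq0$. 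Worse, the system is satisfied by $A=B=C=0$: a braiding relation can only transfer nonvanishing from one coefficient to another, it cannot create it. This is precisely how the paper uses this correlator, but in the \emph{next} claim: there one assumes $\lambda_{4,3}^{3}=0$, applies the minors of Lemma \ref{5Anonzero}, and derives the contradiction $\lambda_{4,2}^{3}\lambda_{4,3}^{2}=0$ --- i.e.\ the present claim is the seed for that argument. Running the same correlator to prove the present claim inverts the logical order; to salvage it you would need an independent seed (for instance the general fact that $Y(u^{4},z)u^{3}\neq0$ for nonzero vectors, so that $A,B,C$ cannot all vanish) together with nonvanishing information about the $Q$-sector matrix $\tilde{B}_{3,3}^{4,4}$, which Lemma \ref{5Anonzero} does not supply and the paper never computes.

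The correct route is the one you relegate to a side remark at the end: use correlators in which $U^{1}$ or the simple current $U^{2}$ appears as an external state, so that by the fusion table \emph{each} ordering has a unique intermediate channel, and plain FHL commutativity (Proposition \ref{extension property}, no braiding matrices) equates two single products of structure constants times manifestly nonzero blocks. This is the paper's entire proof: from $\langle v^{2},Y(u^{2},z_{1})Y(u^{3},z_{2})v^{3}\rangle$ one gets that $\lambda_{2,1}^{2}\lambda_{3,3}^{1}\neq0$ (Claims 1 and 2) forces $\lambda_{3,4}^{2}\lambda_{2,3}^{4}\neq0$, which is the seed --- equation (\ref{A5nonzerocom2343}); the correlators (\ref{A5nonzerocom2341}) and (\ref{A5nonzerocom2342}) then transfer nonvanishing to $\lambda_{2,4}^{3}$ and $\lambda_{3,2}^{4}$, and skew symmetry finishes the remaining pairs. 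Lemma \ref{5Anonzero} is not needed for this claim at all; it enters only afterwards, once the constants established here are available as nonzero seeds.
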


\begin{proof}
Let $u^{2}\in U^{2}$, $u^{3}\in U^{3}$, $u^{4}\in U^{4}$ . Skew symmetry of $Y$
gives
\[
\left\langle Y\left(u^{2},z\right)u^{3},u^{4}\right\rangle =\left\langle e^{zL\left(-1\right)}Y\left(u^{3},-z\right)u^{2},u^{4}\right\rangle ,
\]
that is,
\begin{equation}
\left\langle \lambda_{2,3}^{4}\cdot\mathcal{I}_{2,3}^{4}\left(u^{2},z\right)u^{3},u^{4}\right\rangle =\left\langle \lambda_{3,2}^{4}\cdot e^{zL\left(-1\right)}\mathcal{I}_{3,2}^{4}\left(u^{3},-z\right)u^{2},u^{4}\right\rangle .\label{A5nonzeroskew}
\end{equation}
So $\lambda_{2,3}^{4}$ and $\lambda_{3,2}^{4}$ are both zero or
nonzero. Similarly, we can get  $\lambda_{2,4}^{3}$ and $\lambda_{4,2}^{3}$ are both zero or
nonzero,  $\lambda_{4,3}^{2}$ and $\lambda_{3,4}^{2}$ are both zero or
nonzero.

For any $u^{1}\in U^{1},$ $u^{2},v^{2}\in U^{2}$, $u^{3}, v^{3}\in U^{3}$ and $u^{4}\in U^{4}$,
commutativity of $Y$ implies

\[
\iota_{12}^{-1}\left\langle u^{1},Y\left(u^{2},z_{1}\right)Y\left(u^{3},z_{2}\right)u^{4}\right\rangle =\iota_{21}^{-1}\left\langle u^{1},Y\left(u^{3},z_{1}\right)Y\left(u^{2},z_{1}\right)u^{4}\right\rangle ,
\]

\[
\iota_{12}^{-1}\left\langle u^{1},Y\left(u^{4},z_{1}\right)Y\left(u^{3},z_{2}\right)u^{2}\right\rangle =\iota_{21}^{-1}\left\langle u^{1},Y\left(u^{3},z_{1}\right)Y\left(u^{4},z_{1}\right)u^{2}\right\rangle ,
\]

\[
\iota_{12}^{-1}\left\langle v^{2},Y\left(u^{2},z_{1}\right)Y\left(u^{3},z_{2}\right)v^{3}\right\rangle =\iota_{21}^{-1}\left\langle v^{2},Y\left(u^{3},z_{1}\right)Y\left(u^{2},z_{1}\right)v^{3}\right\rangle .
\]

That is,

\begin{alignat}{1}
\iota_{12}^{-1} & \left\langle u^{1},\lambda_{2,2}^{1}\lambda_{3,4}^{2}\cdot\mathcal{I}_{2,2}^{1}\left(u^{2},z_{1}\right)\mathcal{I}_{3,4}^{2}\left(u^{3},z_{2}\right)u^{4}\right\rangle \nonumber \\
 & =\iota_{21}^{-1}\left\langle u^{1},\lambda_{3,3}^{1}\lambda_{2,4}^{3}\cdot\mathcal{I}_{3,3}^{1}\left(u^{3},z_{2}\right)\mathcal{I}_{2,4}^{3}\left(u^{2},z_{1}\right)u^{4}\right\rangle ,\label{A5nonzerocom2341}
\end{alignat}

\begin{alignat}{1}
\iota_{12}^{-1} & \left\langle u^{1},\lambda_{4,4}^{1}\lambda_{3,2}^{4}\cdot\mathcal{I}_{4,4}^{1}\left(u^{4},z_{1}\right)\mathcal{I}_{3,2}^{4}\left(u^{3},z_{2}\right)u^{2}\right\rangle \nonumber \\
 & =\iota_{21}^{-1}\left\langle u^{1},\lambda_{3,3}^{1}\lambda_{4,2}^{3}\cdot\mathcal{I}_{3,3}^{1}\left(u^{3},z_{2}\right)\mathcal{I}_{4,2}^{3}\left(u^{4},z_{1}\right)u^{2}\right\rangle ,\label{A5nonzerocom2342}
\end{alignat}

\begin{alignat}{1}
\iota_{12}^{-1} & \left\langle v^{2},\lambda_{2,1}^{2}\lambda_{3,3}^{1}\cdot\mathcal{I}_{2,1}^{2}\left(u^{2},z_{1}\right)\mathcal{I}_{3,3}^{1}\left(u^{3},z_{2}\right)v^{3}\right\rangle \nonumber \\
 & =\iota_{21}^{-1}\left\langle v^{2},\lambda_{3,4}^{2}\lambda_{2,3}^{4}\cdot\mathcal{I}_{3,4}^{2}\left(u^{3},z_{2}\right)\mathcal{I}_{2,3}^{4}\left(u^{2},z_{1}\right)v^{3}\right\rangle \label{A5nonzerocom2343}.
\end{alignat}

Then the Claim follows from (\ref{A5nonzeroskew}), (\ref{A5nonzerocom2341}), (\ref{A5nonzerocom2342}), (\ref{A5nonzerocom2343})  and  Claim 1 and 2.
\end{proof}

\begin{claim}
\emph{ $\lambda_{3,4}^{3}$,
$\lambda_{4,3}^{3}$, $\lambda_{3,3}^{4},\lambda_{4,4}^{4}, \lambda_{3,4}^{3}, \lambda_{3,4}^{4}$,
$\lambda_{4,3}^{4}$, $\lambda_{3,3}^{3},\lambda_{4,4}^{3}$ are all nonzero .
}
\end{claim}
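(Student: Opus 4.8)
The plan is to establish Claim 4 by the same three ingredients used in Claim 3 --- skew-symmetry of $Y$, the invariant bilinear form, and commutativity --- but now, because each of the fusion products $U^{3}\boxtimes U^{3}$, $U^{3}\boxtimes U^{4}$, $U^{4}\boxtimes U^{4}$ contains \emph{several} summands, the intermediate module in a product of two vertex operators is no longer unique. Consequently commutativity must be coupled with Huang's braiding matrix (Remark \ref{huangyizhi2}, \eqref{braiding matrix property}) instead of a single scalar, and the nonvanishing minors of Lemma \ref{5Anonzero} will do the real work.

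First I would cut the list down to a few representatives. Skew-symmetry of $Y$ (\cite{FHL}), used exactly as in \eqref{A5nonzeroskew}, shows that $\lambda_{3,4}^{c}$ and $\lambda_{4,3}^{c}$ vanish or not together for each $c$, while $\lambda_{3,3}^{c}$ and $\lambda_{4,4}^{c}$ are already symmetric in their lower indices. The invariant bilinear form of Remark \ref{self-dual}, applied as in Claim 2, links $\lambda_{3,3}^{4}$ with $\lambda_{3,4}^{3}$ and $\lambda_{4,4}^{3}$ with $\lambda_{4,3}^{4}$. This groups the listed coefficients into four classes, with representatives $\lambda_{3,3}^{3}$, $\lambda_{4,4}^{4}$, $\lambda_{4,3}^{3}$ and $\lambda_{4,4}^{3}$; it suffices to show each of these is nonzero.

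Next I would apply commutativity (Proposition \ref{extension property}(a)) to four-point functions of the form $\langle s,Y(u,z_{1})Y(v,z_{2})w\rangle$ with $u,v\in U^{3}\cup U^{4}$ and $s,w$ chosen so that the input and output modules in each tensor factor are $P_{3}$ and $P_{4}$; the model case is $\langle s^{3},Y(u^{4},z_{1})Y(v^{4},z_{2})w^{3}\rangle$, whose two orderings run over intermediate modules $U^{\mu}$ with $\mu\in\{2,3,4\}$ dictated by the fusion tables. Expanding $Y$ in the chosen intertwining-operator basis and matching coefficients via the linear independence recorded just before \eqref{braiding matrix property}, commutativity becomes a linear relation among the products $\lambda_{4,\mu}^{3}\lambda_{4,3}^{\mu}$. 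Since $\mathcal{I}_{a,b}^{c}=\mathcal{Y}_{a,b}^{c}\otimes\overline{\mathcal{Y}}_{a,b}^{c}$, the braiding carrying one ordering into the other factors as the $P$-side matrix $\left(B_{3,3}^{4,4}\right)$ of Lemma \ref{5Anonzero} times the $Q$-side braiding for the same Virasoro VOA with the labels $3\leftrightarrow 4$ interchanged (recall $Q_{3}=P_{4}$, $Q_{4}=P_{3}$), so the coefficient matrix of each such system is a $2\times 2$ block assembled from entries of $\left(B_{3,3}^{4,4}\right)$.

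Finally I would feed in the data already secured: by Claims 1--3 the products $\lambda_{4,2}^{3}\lambda_{4,3}^{2}$ and $\lambda_{3,2}^{4}\lambda_{4,3}^{2}$ are nonzero, and these enter the systems as nonzero inhomogeneous terms; moreover $\lambda_{4,3}^{4}\neq 0$, so that nonvanishing of the product $\lambda_{4,4}^{3}\lambda_{4,3}^{4}$ yields $\lambda_{4,4}^{3}\neq 0$, and similarly for the others. The two quantities shown to be nonzero in Lemma \ref{5Anonzero}, namely \eqref{A5nonzero1} and \eqref{A5nonzero2}, are precisely the $2\times 2$ minors controlling invertibility of these systems. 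Arguing by contradiction, if a representative coefficient were zero the associated system would force one of these minors to annihilate a nonzero product, which is impossible; hence every representative, and with it every coefficient in the list, is nonzero. I expect the main obstacle to be bookkeeping rather than any new idea: one must pick the external vectors $s,w$ so that exactly the intermediate modules entering \eqref{A5nonzero1} and \eqref{A5nonzero2} appear, track the $P/Q$ relabeling $3\leftrightarrow 4$ when forming the combined tensor-product braiding, and confirm that the determinant of each $2\times 2$ system agrees, up to a nonzero scalar, with one of the two minors of Lemma \ref{5Anonzero}. Getting this matching right is the crux; once it is done, the nonvanishing in \eqref{A5nonzero1} and \eqref{A5nonzero2} closes the argument.
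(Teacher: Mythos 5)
Your reduction step and your treatment of two of the four representatives do track the paper. Skew-symmetry gives $\lambda_{3,4}^{c}\leftrightarrow\lambda_{4,3}^{c}$, and using the invariant bilinear form to link $\lambda_{3,3}^{4}$ with $\lambda_{3,4}^{3}$ and $\lambda_{4,4}^{3}$ with $\lambda_{4,3}^{4}$ is a sound substitute for the commutativity relation (\ref{A53342}) that the paper uses for the same purpose; likewise, for the representatives $\lambda_{4,3}^{3}$ and $\lambda_{4,4}^{3}$ your braiding system on $\left(U^{3},U^{4},U^{4},U^{3}\right)$, closed by the two minors (\ref{A5nonzero1}) and (\ref{A5nonzero2}) of Lemma \ref{5Anonzero}, is exactly the paper's argument. (A small slip: the phrase ``moreover $\lambda_{4,3}^{4}\neq 0$'' presupposes part of the claim, but it is harmless, since proving the product $\lambda_{4,4}^{3}\lambda_{4,3}^{4}\neq 0$ already gives both factors nonzero.)

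The genuine gap is that your four classes leave $\lambda_{3,3}^{3}$ and $\lambda_{4,4}^{4}$ as isolated representatives, and these can never be reached by the systems you propose. The braiding matrix $B_{3,3}^{4,4}$ of Lemma \ref{5Anonzero} governs only the configuration with external states $U^{3},U^{4},U^{4},U^{3}$, and the fusion table shows the only $\lambda$-products occurring in that four-point function are $\lambda_{4,2}^{3}\lambda_{4,3}^{2}$, $\left(\lambda_{4,3}^{3}\right)^{2}$ and $\lambda_{4,4}^{3}\lambda_{4,3}^{4}$; neither $\lambda_{3,3}^{3}$ nor $\lambda_{4,4}^{4}$ appears. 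The configurations that do contain them, such as $\left(U^{3},U^{3},U^{3},U^{3}\right)$ (products $\lambda_{3,1}^{3}\lambda_{3,3}^{1}$, $\left(\lambda_{3,3}^{3}\right)^{2}$, $\lambda_{3,4}^{3}\lambda_{3,3}^{4}$) or $\left(U^{3},U^{3},U^{4},U^{4}\right)$, are governed by the different matrices $B_{3,3}^{3,3}$ and $B_{3,4}^{3,4}$, about which Lemma \ref{5Anonzero} says nothing; so your claim that the lemma's two minors are ``precisely'' the minors controlling all of your systems fails for exactly the two representatives you still need. The paper closes this hole before invoking any braiding: in (\ref{A53343}) it applies commutativity with the external vector taken in $U^{2}$, where the fusion rules force a unique intermediate module on each side, so the identity has a single term on each side and reads $\lambda_{3,4}^{2}\lambda_{4,4}^{4}\sim\lambda_{4,3}^{2}\lambda_{3,4}^{3}$; since $\lambda_{3,4}^{2},\lambda_{4,3}^{2}\neq 0$ by Claim 3, this merges $\lambda_{4,4}^{4}$ into the class of $\lambda_{3,4}^{3}$, and the mirror relation with $\left\langle u^{2},Y\left(u^{4},z_{1}\right)Y\left(u^{3},z_{2}\right)v^{3}\right\rangle$ merges $\lambda_{3,3}^{3}$ into the class of $\lambda_{4,3}^{4}$. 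Only two classes then remain, both of whose representatives occur in the $B_{3,3}^{4,4}$-system, and the contradiction argument finishes. Your plan needs this extra merging step (or else new nonvanishing computations for minors of $B_{3,3}^{3,3}$ and $B_{3,4}^{3,4}$) to be complete.
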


\begin{proof}
First we will show that $\lambda_{4,3}^{3}$, $\lambda_{3,3}^{4},\lambda_{4,4}^{4}, \lambda_{3,4}^{3}$ are all zero or all nonzreo.

Let $u^{1} \in U^{1}, u^{2}\in U^{2}$, $v^{3}, u^{3}\in U^{3}$, $v^{4}, u^{4}\in U^{4}$ . Skew symmetry of $Y$
gives
\[
\left\langle Y\left(u^{3},z\right)u^{4},v^{3}\right\rangle =\left\langle e^{zL\left(-1\right)}Y\left(u^{4},-z\right)u^{3},v^{3}\right\rangle ,
\]
that is,
\begin{equation}
\left\langle \lambda_{3,4}^{3}\cdot\mathcal{I}_{3,4}^{3}\left(u^{3},z\right)u^{4},v^{3}\right\rangle =\left\langle \lambda_{4,3}^{3}\cdot e^{zL\left(-1\right)}\mathcal{I}_{4,3}^{3}\left(u^{4},-z\right)u^{3},v^{3}\right\rangle . \label{A53341}
\end{equation}
So $\lambda_{3,4}^{3}$ and $\lambda_{4,3}^{3}$ are both zero or
nonzero.

commutativity of $Y$ implies

\[
\iota_{12}^{-1}\left\langle u^{1},Y\left(u^{3},z_{1}\right)Y\left(u^{4},z_{2}\right)v^{3}\right\rangle =\iota_{21}^{-1}\left\langle u^{1},Y\left(u^{4},z_{1}\right)Y\left(u^{3},z_{1}\right)v^{3}\right\rangle ,
\]

\[
\iota_{12}^{-1}\left\langle u^{2},Y\left(u^{3},z_{1}\right)Y\left(u^{4},z_{2}\right)v^{4}\right\rangle =\iota_{21}^{-1}\left\langle u^{2},Y\left(u^{4},z_{1}\right)Y\left(u^{3},z_{1}\right)v^{4}\right\rangle ,
\]

That is,

\begin{alignat}{1}
\iota_{12}^{-1} & \left\langle u^{1},\lambda_{3,3}^{1}\lambda_{4,3}^{3}\cdot\mathcal{I}_{3,3}^{1}\left(u^{3},z_{1}\right)\mathcal{I}_{4,3}^{3}\left(u^{4},z_{2}\right)v^{3}\right\rangle \nonumber \\
 & =\iota_{21}^{-1}\left\langle u^{1},\lambda_{4,4}^{1}\lambda_{3,3}^{4}\cdot\mathcal{I}_{4,4}^{1}\left(u^{4},z_{2}\right)\mathcal{I}_{3,3}^{4}\left(u^{3},z_{1}\right)v^{3}\right\rangle ,\label{A53342}
\end{alignat}

\begin{alignat}{1}
\iota_{12}^{-1} & \left\langle u^{2},\lambda_{3,4}^{2}\lambda_{4,4}^{4}\cdot\mathcal{I}_{3,4}^{2}\left(u^{3},z_{1}\right)\mathcal{I}_{4,4}^{4}\left(u^{4},z_{2}\right)v^{4}\right\rangle \nonumber \\
 & =\iota_{21}^{-1}\left\langle u^{2},\lambda_{4,3}^{2}\lambda_{3,4}^{3}\cdot\mathcal{I}_{4,3}^{2}\left(u^{4},z_{2}\right)\mathcal{I}_{3,4}^{3}\left(u^{3},z_{1}\right)v^{4}\right\rangle ,\label{A53343}
\end{alignat}

Combining with Claim 1-3 and (\ref{A53341}), (\ref{A53342}), (\ref{A53343}) we have  $\lambda_{4,3}^{3}$ and $\lambda_{3,3}^{4}$ are both zero or
nonzero,  $\lambda_{4,4}^{4}$ and $\lambda_{3,4}^{3}$ are both zero or
nonzero. In total, $\lambda_{4,3}^{3}$, $\lambda_{3,3}^{4},\lambda_{4,4}^{4}, \lambda_{3,4}^{3}$ are all zero or all nonzreo.

Similarly, we can show that $\lambda_{3,4}^{4}$, $\lambda_{4,3}^{4},\lambda_{3,3}^{3}, \lambda_{4,4}^{3}$ are all zero or all nonzreo.

Next we use braiding matrices to finish this claim.

 Consider the four point functions on $\left(U^{3},U^{4},U^{4},U^{3}\right).$
Let $p_{1}^{3}\otimes p_{2}^{3}, t_{1}^{3}\otimes t_{2}^{3} \in U^{3}$, $u_{1}^{4}\otimes u_{2}^{4}, v_{1}^{4}\otimes v_{2}^{4}\in U^{4}$, let $B_{3,3}^{4,4}$ be as defined in (\ref{braiding matrix property}),
then we have
\begin{alignat}{1}
 & \iota_{12}^{-1}\left\langle t_{1}^{3}\otimes t_{2}^{3},Y\left(v_{1}^{4}\otimes v_{2}^{4},z_{1}\right)Y\left(u_{1}^{4}\otimes u_{2}^{4},z_{2}\right)p_{1}^{3}\otimes p_{2}^{3}\right\rangle \nonumber \\
 & =\iota_{12}^{-1}\langle t_{1}^{3}\otimes t_{2}^{3},\lambda_{4,2}^{3}\lambda_{4,3}^{2}\cdot\mathcal{Y}_{4,2}^{3}\otimes\overline{\mathcal{Y}}_{4,2}^{3}\left(v_{1}^{4}\otimes v_{2}^{4},z_{1}\right)\cdot\mathcal{Y}_{4,3}^{2}\otimes\overline{\mathcal{Y}}_{4,3}^{2}\left(u_{1}^{4}\otimes u_{2}^{4},z_{2}\right)\cdot p_{1}^{3}\otimes p_{2}^{3}\nonumber \\
 & +\lambda_{4,3}^{3}\lambda_{4,3}^{3}\cdot\mathcal{Y}_{4,3}^{3}\otimes\overline{\mathcal{Y}}_{4,3}^{3}\left(v_{1}^{4}\otimes v_{2}^{4},z_{1}\right)\cdot\mathcal{Y}_{4,3}^{3}\otimes\overline{\mathcal{Y}}_{4,3}^{3}\left(u_{1}^{4}\otimes u_{2}^{4},z_{2}\right)\cdot p_{1}^{3}\otimes p_{2}^{3}\rangle\nonumber \\
 & +\lambda_{4,4}^{3}\lambda_{4,3}^{4}\cdot\mathcal{Y}_{4,4}^{3}\otimes\overline{\mathcal{Y}}_{4,4}^{3}\left(v_{1}^{4}\otimes v_{2}^{4},z_{1}\right)\cdot\mathcal{Y}_{4,3}^{4}\otimes\overline{\mathcal{Y}}_{4,3}^{4}\left(u_{1}^{4}\otimes u_{2}^{4},z_{2}\right)\cdot p_{1}^{3}\otimes p_{2}^{3}\rangle\nonumber \\
 & =\iota_{12}^{-1}\langle t_{1}^{3}\otimes t_{2}^{3},\lambda_{4,2}^{3}\lambda_{4,3}^{2}\cdot\mathcal{Y}_{4,2}^{3}\left(v_{1}^{4},z_{1}\right)\mathcal{Y}_{4,3}^{2}\left(u_{1}^{4},z_{2}\right)p_{1}^{3}\otimes\overline{\mathcal{Y}}_{4,2}^{3}\left(v_{2}^{4},z_{1}\right)\overline{\mathcal{Y}}_{4,3}^{2}\left(u_{2}^{4},z_{2}\right)p_{2}^{3}\nonumber \\
 & +\lambda_{4,3}^{3}\lambda_{4,3}^{3}\cdot\mathcal{Y}_{4,3}^{3}\left(v_{1}^{4},z_{1}\right)\mathcal{Y}_{4,3}^{3}\left(u_{1}^{4},z_{2}\right)p_{1}^{3}\otimes\overline{\mathcal{Y}}_{4,3}^{3}\left(v_{2}^{4},z_{1}\right)\overline{\mathcal{Y}}_{4,3}^{3}\left(u_{2}^{4},z_{2}\right)p_{2}^{3}\nonumber \\
& +\lambda_{4,4}^{3}\lambda_{4,3}^{4}\cdot\mathcal{Y}_{4,4}^{3}\left(v_{1}^{4},z_{1}\right)\mathcal{Y}_{4,3}^{4}\left(u_{1}^{4},z_{2}\right)p_{1}^{3}\otimes\overline{\mathcal{Y}}_{4,4}^{3}\left(v_{2}^{4},z_{1}\right)\overline{\mathcal{Y}}_{4,3}^{4}\left(u_{2}^{4},z_{2}\right)p_{2}^{3}\rangle\nonumber \\
 & =\iota_{21}^{-1}\langle t_{1}^{3}\otimes t_{2}^{3},\lambda_{4,2}^{3}\lambda_{4,3}^{2}\cdot\sum_{i=2,3,4}\left(B_{3,3}^{4,4}\right)_{2,i}\mathcal{Y}_{4,i}^{3}\left(u_{1}^{4},z_{2}\right)\mathcal{Y}_{4,3}^{i}\left(v_{1}^{4},z_{1}\right)p_{1}^{3}\nonumber \\
 & \otimes\sum_{j=2,3,4}\left(\tilde{B}_{3,3}^{4,4}\right)_{2,j}\mathcal{\overline{Y}}_{4,j}^{3}\left(u_{2}^{4},z_{2}\right)\mathcal{\overline{Y}}_{4,3}^{j}\left(v_{2}^{4},z_{1}\right)p_{2}^{3}\nonumber \\
 &+ \lambda_{4,3}^{3}\lambda_{4,3}^{3}\cdot\sum_{i=2,3,4}\left(B_{3,3}^{4,4}\right)_{3,i}\mathcal{Y}_{4,i}^{3}\left(u_{1}^{4},z_{2}\right)\mathcal{Y}_{4,3}^{i}\left(v_{1}^{4},z_{1}\right)p_{1}^{3}\nonumber \\
 & \otimes\sum_{j=2,3,4}\left(\tilde{B}_{3,3}^{4,4}\right)_{3,j}\mathcal{\overline{Y}}_{4,j}^{3}\left(u_{2}^{4},z_{2}\right)\mathcal{\overline{Y}}_{4,3}^{j}\left(v_{2}^{4},z_{1}\right)p_{2}^{3}\nonumber\\
&+ \lambda_{4,4}^{3}\lambda_{4,3}^{4}\cdot\sum_{i=2,3,4}\left(B_{3,3}^{4,4}\right)_{4,i}\mathcal{Y}_{4,i}^{3}\left(u_{1}^{4},z_{2}\right)\mathcal{Y}_{4,3}^{i}\left(v_{1}^{4},z_{1}\right)p_{1}^{3}\nonumber \\
 & \otimes\sum_{j=2,3,4}\left(\tilde{B}_{3,3}^{4,4}\right)_{4,j}\mathcal{\overline{Y}}_{4,j}^{3}\left(u_{2}^{4},z_{2}\right)\mathcal{\overline{Y}}_{4,3}^{j}\left(v_{2}^{4},z_{1}\right)p_{2}^{3}\nonumber \rangle
\end{alignat}

In the mean time, we have
\begin{alignat}{1}
 & \iota_{21}^{-1}\left\langle t_{1}^{3}\otimes t_{2}^{3},Y\left(u_{1}^{4}\otimes u_{2}^{4},z_{2}\right)Y\left(v_{1}^{4}\otimes v_{2}^{4},z_{1}\right)p_{1}^{3}\otimes p_{2}^{3}\right\rangle \nonumber \\
 & =\iota_{21}^{-1}\langle t_{1}^{3}\otimes t_{2}^{3},\lambda_{4,2}^{3}\lambda_{4,3}^{2}\cdot\mathcal{Y}_{4,2}^{3}\left(u_{1}^{4},z_{2}\right)\mathcal{Y}_{4,3}^{2}\left(v_{1}^{4},z_{1}\right)p_{1}^{3}\otimes\mathcal{\overline{Y}}_{4,2}^{3}\left(u_{2}^{4},z_{2}\right)\mathcal{\overline{Y}}_{4,3}^{2}\left(v_{2}^{4},z_{1}\right)p_{2}^{3}\nonumber \\
 & +\lambda_{4,3}^{3}\lambda_{4,3}^{3}\cdot\mathcal{Y}_{4,3}^{3}\left(u_{1}^{4},z_{2}\right)\mathcal{Y}_{4,3}^{3}\left(v_{1}^{4},z_{1}\right)p_{1}^{3}\otimes\mathcal{\overline{Y}}_{4,3}^{3}\left(u_{2}^{4},z_{2}\right)\mathcal{\overline{Y}}_{4,3}^{3}\left(v_{2}^{4},z_{1}\right)p_{2}^{3}\nonumber \\
&+\lambda_{4,4}^{3}\lambda_{4,3}^{4}\cdot\mathcal{Y}_{4,4}^{3}\left(u_{1}^{4},z_{2}\right)\mathcal{Y}_{4,3}^{4}\left(v_{1}^{4},z_{1}\right)p_{1}^{3}\otimes\mathcal{\overline{Y}}_{4,4}^{3}\left(u_{2}^{4},z_{2}\right)\mathcal{\overline{Y}}_{4,3}^{4}\left(v_{2}^{4},z_{1}\right)p_{2}^{3}\nonumber \rangle
\end{alignat}

Then we can imply that
\begin{alignat}{1}
\begin{cases}
\lambda_{4,2}^{3}\lambda_{4,3}^{2}\left(B_{3,3}^{4,4}\right)_{2,2}\cdot\left(\tilde{B}_{3,3}^{4,4}\right)_{2,2}+\lambda_{4,3}^{3}\lambda_{4,3}^{3}\cdot\left(B_{3,3}^{4,4}\right)_{3,2}\cdot\left(\tilde{B}_{3,3}^{4,4}\right)_{3,2}+\lambda_{4,4}^{3}\lambda_{4,3}^{4}\cdot\left(B_{3,3}^{4,4}\right)_{4,2}\cdot\left(\tilde{B}_{3,3}^{4,4}\right)_{4,2}=\lambda_{4,2}^{3}\lambda_{4,3}^{2}\\
\lambda_{4,2}^{3}\lambda_{4,3}^{2}\left(B_{3,3}^{4,4}\right)_{2,3}\cdot\left(\tilde{B}_{3,3}^{4,4}\right)_{2,3}+\lambda_{4,3}^{3}\lambda_{4,3}^{3}\cdot\left(B_{3,3}^{4,4}\right)_{3,3}\cdot\left(\tilde{B}_{3,3}^{4,4}\right)_{3,3}+\lambda_{4,4}^{3}\lambda_{4,3}^{4}\cdot\left(B_{3,3}^{4,4}\right)_{4,3}\cdot\left(\tilde{B}_{3,3}^{4,4}\right)_{4,3}=\lambda_{4,3}^{3}\lambda_{4,3}^{3}\\
\lambda_{4,2}^{3}\lambda_{4,3}^{2}\left(B_{3,3}^{4,4}\right)_{2,4}\cdot\left(\tilde{B}_{3,3}^{4,4}\right)_{2,4}+\lambda_{4,3}^{3}\lambda_{4,3}^{3}\cdot\left(B_{3,3}^{4,4}\right)_{3,4}\cdot\left(\tilde{B}_{3,3}^{4,4}\right)_{3,4}+\lambda_{4,4}^{3}\lambda_{4,3}^{4}\cdot\left(B_{3,3}^{4,4}\right)_{4,4}\cdot\left(\tilde{B}_{3,3}^{4,4}\right)_{4,4}=\lambda_{4,4}^{3}\lambda_{4,3}^{4}\\
\lambda_{4,2}^{3}\lambda_{4,3}^{2}\left(B_{3,3}^{4,4}\right)_{2,2}\cdot\left(\tilde{B}_{3,3}^{4,4}\right)_{2,3}+\lambda_{4,3}^{3}\lambda_{4,3}^{3}\cdot\left(B_{3,3}^{4,4}\right)_{3,2}\cdot\left(\tilde{B}_{3,3}^{4,4}\right)_{3,3}+\lambda_{4,4}^{3}\lambda_{4,3}^{4}\cdot\left(B_{3,3}^{4,4}\right)_{4,2}\cdot\left(\tilde{B}_{3,3}^{4,4}\right)_{4,3}=0\\
\lambda_{4,2}^{3}\lambda_{4,3}^{2}\left(B_{3,3}^{4,4}\right)_{2,2}\cdot\left(\tilde{B}_{3,3}^{4,4}\right)_{2,4}+\lambda_{4,3}^{3}\lambda_{4,3}^{3}\cdot\left(B_{3,3}^{4,4}\right)_{3,2}\cdot\left(\tilde{B}_{3,3}^{4,4}\right)_{3,4}+\lambda_{4,4}^{3}\lambda_{4,3}^{4}\cdot\left(B_{3,3}^{4,4}\right)_{4,2}\cdot\left(\tilde{B}_{3,3}^{4,4}\right)_{4,4}=0. \label{A5nonzerosystem}\\
\lambda_{4,2}^{3}\lambda_{4,3}^{2}\left(B_{3,3}^{4,4}\right)_{2,3}\cdot\left(\tilde{B}_{3,3}^{4,4}\right)_{2,2}+\lambda_{4,3}^{3}\lambda_{4,3}^{3}\cdot\left(B_{3,3}^{4,4}\right)_{3,3}\cdot\left(\tilde{B}_{3,3}^{4,4}\right)_{3,2}+\lambda_{4,4}^{3}\lambda_{4,3}^{4}\cdot\left(B_{3,3}^{4,4}\right)_{4,3}\cdot\left(\tilde{B}_{3,3}^{4,4}\right)_{4,2}=0\\
\lambda_{4,2}^{3}\lambda_{4,3}^{2}\left(B_{3,3}^{4,4}\right)_{2,3}\cdot\left(\tilde{B}_{3,3}^{4,4}\right)_{2,4}+\lambda_{4,3}^{3}\lambda_{4,3}^{3}\cdot\left(B_{3,3}^{4,4}\right)_{3,3}\cdot\left(\tilde{B}_{3,3}^{4,4}\right)_{3,4}+\lambda_{4,4}^{3}\lambda_{4,3}^{4}\cdot\left(B_{3,3}^{4,4}\right)_{4,3}\cdot\left(\tilde{B}_{3,3}^{4,4}\right)_{4,4}=0\\
\lambda_{4,2}^{3}\lambda_{4,3}^{2}\left(B_{3,3}^{4,4}\right)_{2,4}\cdot\left(\tilde{B}_{3,3}^{4,4}\right)_{2,2}+\lambda_{4,3}^{3}\lambda_{4,3}^{3}\cdot\left(B_{3,3}^{4,4}\right)_{3,4}\cdot\left(\tilde{B}_{3,3}^{4,4}\right)_{3,2}+\lambda_{4,4}^{3}\lambda_{4,3}^{4}\cdot\left(B_{3,3}^{4,4}\right)_{4,4}\cdot\left(\tilde{B}_{3,3}^{4,4}\right)_{4,2}=0\\
\lambda_{4,2}^{3}\lambda_{4,3}^{2}\left(B_{3,3}^{4,4}\right)_{2,4}\cdot\left(\tilde{B}_{3,3}^{4,4}\right)_{2,3}+\lambda_{4,3}^{3}\lambda_{4,3}^{3}\cdot\left(B_{3,3}^{4,4}\right)_{3,4}\cdot\left(\tilde{B}_{3,3}^{4,4}\right)_{3,3}+\lambda_{4,4}^{3}\lambda_{4,3}^{4}\cdot\left(B_{3,3}^{4,4}\right)_{4,4}\cdot\left(\tilde{B}_{3,3}^{4,4}\right)_{4,3}=0 \\
\end{cases}
\end{alignat}

If  $\lambda_{4,3}^{3}=0$ , then from the first, sixth, eighth equations of (\ref{A5nonzerosystem})
we have
\[
\begin{cases}
\lambda_{4,2}^{3}\lambda_{4,3}^{2}\left(B_{3,3}^{4,4}\right)_{2,2}\cdot\left(\tilde{B}_{3,3}^{4,4}\right)_{2,2}+\lambda_{4,4}^{3}\lambda_{4,3}^{4}\cdot\left(B_{3,3}^{4,4}\right)_{4,2}\cdot\left(\tilde{B}_{3,3}^{4,4}\right)_{4,2}=\lambda_{4,2}^{3}\lambda_{4,3}^{2}\\
\lambda_{4,2}^{3}\lambda_{4,3}^{2}\left(B_{3,3}^{4,4}\right)_{2,3}\cdot\left(\tilde{B}_{3,3}^{4,4}\right)_{2,2}+\lambda_{4,4}^{3}\lambda_{4,3}^{4}\cdot\left(B_{3,3}^{4,4}\right)_{4,3}\cdot\left(\tilde{B}_{3,3}^{4,4}\right)_{4,2}=0\\
\lambda_{4,2}^{3}\lambda_{4,3}^{2}\left(B_{3,3}^{4,4}\right)_{2,4}\cdot\left(\tilde{B}_{3,3}^{4,4}\right)_{2,2}+\lambda_{4,4}^{3}\lambda_{4,3}^{4}\cdot\left(B_{3,3}^{4,4}\right)_{4,4}\cdot\left(\tilde{B}_{3,3}^{4,4}\right)_{4,2}=0\\
\end{cases}
\]

then we can get the following:
\begin{alignat}{1}
\begin{cases}
\lambda_{4,2}^{3}\lambda_{4,3}^{2}\left(B_{3,3}^{4,4}\right)_{2,2}\cdot\left(\tilde{B}_{3,3}^{4,4}\right)_{2,2}+\lambda_{4,4}^{3}\lambda_{4,3}^{4}\cdot\left(B_{3,3}^{4,4}\right)_{4,2}\cdot\left(\tilde{B}_{3,3}^{4,4}\right)_{4,2}=\lambda_{4,2}^{3}\lambda_{4,3}^{2} \label{A5systemnonzero1}\\
\lambda_{4,2}^{3}\lambda_{4,3}^{2}\left(\left(B_{3,3}^{4,4}\right)_{4,4}\cdot\left(B_{3,3}^{4,4}\right)_{2,3}-\left(B_{3,3}^{4,4}\right)_{4,3}\cdot\left(B_{3,3}^{4,4}\right)_{2,4}\right)\cdot\left(\tilde{B}_{3,3}^{4,4}\right)_{2,2}=0\\
\lambda_{4,4}^{3}\lambda_{4,3}^{4}\left(\left(B_{3,3}^{4,4}\right)_{4,4}\cdot\left(B_{3,3}^{4,4}\right)_{2,3}-\left(B_{3,3}^{4,4}\right)_{4,3}\cdot\left(B_{3,3}^{4,4}\right)_{2,4}\right)\cdot\left(\tilde{B}_{3,3}^{4,4}\right)_{4,2}=0\\
\end{cases}
\end{alignat}
Since by Lemma \ref{5Anonzero}
\[
\left(B_{3,3}^{4,4}\right)_{4,4}\cdot\left(B_{3,3}^{4,4}\right)_{2,3}-\left(B_{3,3}^{4,4}\right)_{4,3}\cdot\left(B_{3,3}^{4,4}\right)_{2,4}\not=0,\\
\]
 from the last two equations of (\ref{A5systemnonzero1}) we have
\[
\left(\tilde{B}_{3,3}^{4,4}\right)_{2,2}=0, \lambda_{4,4}^{3}\lambda_{4,3}^{4}\left(\tilde{B}_{3,3}^{4,4}\right)_{4,2}=0.\\
\]
But then by the first equation of (\ref{A5systemnonzero1}) we will get  $0=\lambda_{4,2}^{3}\lambda_{4,3}^{2}$, contradicting with Claim 3.
So  $\lambda_{4,3}^{3}\not=0$.\\

By a similar process, we can get $\lambda_{4,4}^{3}\not=0$. Hence \emph{ $\lambda_{3,4}^{3}$,
$\lambda_{4,3}^{3}$, $\lambda_{3,3}^{4},\lambda_{4,4}^{4}, \lambda_{3,4}^{3}, \lambda_{3,4}^{4}$,
$\lambda_{4,3}^{4}$, $\lambda_{3,3}^{3},\lambda_{4,4}^{3}$ are all nonzero .
}
\end{proof}

The following lemma was given in \cite{DJY}:
\begin{lemma}\label{new structure}Let $(V,Y)$ be a vertex operator
algebra and $\sigma:V\to V$ be a linear isomorphism such that $\sigma\left(1\right)=1,\sigma\left(\omega\right)=\omega$.
Then $(V,Y^{\sigma})$ is a vertex operator algebra where
\[
Y^{\sigma}(u,z)=\sigma Y(\sigma^{-1}u,z)\sigma^{-1}
\]
and $(V,Y)\cong(V,Y^{\sigma})$. \end{lemma}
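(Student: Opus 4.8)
The plan is to read this as a transport-of-structure statement: $\sigma$ is not merely a linear isomorphism of the vector space $V$, but, once we equip the target copy of $V$ with the operator $Y^{\sigma}$, it becomes an isomorphism of vertex operator algebras. Accordingly, the strategy is first to record the single intertwining identity that makes $\sigma$ compatible with $Y$ and $Y^{\sigma}$, and then to deduce every vertex operator algebra axiom for $(V,Y^{\sigma})$ by conjugating the corresponding axiom for $(V,Y)$ by $\sigma$.

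The key computation I would carry out first is purely formal: from the definition $Y^{\sigma}(u,z)=\sigma Y(\sigma^{-1}u,z)\sigma^{-1}$ one obtains
\[
Y^{\sigma}(\sigma u,z)\sigma v=\sigma Y(u,z)v
\]
for all $u,v\in V$, so that $\sigma$ intertwines $Y$ and $Y^{\sigma}$. I would also note that $\sigma(\mathbf{1})=\mathbf{1}$ and $\sigma(\omega)=\omega$ force $\sigma^{-1}(\mathbf{1})=\mathbf{1}$ and $\sigma^{-1}(\omega)=\omega$ by bijectivity, whence $Y^{\sigma}(\omega,z)=\sigma Y(\omega,z)\sigma^{-1}$ and therefore $L^{\sigma}(n)=\sigma L(n)\sigma^{-1}$ for every $n$, where $L^{\sigma}(n)$ denotes the Virasoro modes attached to $Y^{\sigma}$.

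With these two observations in hand, each axiom becomes a one-line conjugation. For the grading I would show that the $L^{\sigma}(0)$-eigenspace of eigenvalue $n$ is exactly $\sigma(V_{n})$, whence $V=\bigoplus_{n}\sigma(V_{n})$ inherits the same finite dimensions and the same lower truncation because $\sigma$ is a linear bijection. The truncation condition, the vacuum property $Y^{\sigma}(\mathbf{1},z)=\mathrm{id}$, the creation property, the $L(-1)$-derivative property $\frac{d}{dz}Y^{\sigma}(v,z)=Y^{\sigma}(L^{\sigma}(-1)v,z)$, and the Virasoro relations with unchanged central charge then all follow by applying $\sigma(\,\cdot\,)\sigma^{-1}$ to the corresponding identity for $Y$, using $L^{\sigma}(n)=\sigma L(n)\sigma^{-1}$ for the last two. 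The one step that needs genuine care is the Jacobi identity: there I would apply $\sigma(\,\cdot\,)\sigma^{-1}$ to the Jacobi identity for $Y$ evaluated at $\sigma^{-1}u,\sigma^{-1}v$, and verify that the iterate term transforms correctly, using $\sigma^{-1}\bigl(Y^{\sigma}(u,z_{0})v\bigr)=Y(\sigma^{-1}u,z_{0})\sigma^{-1}v$ to identify $Y^{\sigma}(Y^{\sigma}(u,z_{0})v,z_{2})$ with $\sigma Y(Y(\sigma^{-1}u,z_{0})\sigma^{-1}v,z_{2})\sigma^{-1}$.

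Once the axioms are verified, $(V,Y^{\sigma})$ is a vertex operator algebra, and the intertwining identity together with $\sigma(\mathbf{1})=\mathbf{1}$ and $\sigma(\omega)=\omega$ exhibits $\sigma$ as an isomorphism $(V,Y)\to(V,Y^{\sigma})$, giving $(V,Y)\cong(V,Y^{\sigma})$. I expect no serious obstacle here, since the whole argument is a transport of structure; the only places demanding attention are the bookkeeping in the nested vertex operator term of the Jacobi identity and the confirmation that the $L^{\sigma}(0)$-eigenspace decomposition furnishes a legitimate vertex operator algebra grading.
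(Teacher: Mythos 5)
Your proof is correct. Note that the paper itself gives no proof of this lemma: it is stated with the remark ``The following lemma was given in \cite{DJY}'' and used as a black box, so your transport-of-structure argument supplies exactly the details that the citation suppresses, and it is the standard (and essentially the only) way to prove this fact. The three ingredients you isolate are precisely the right ones: the intertwining identity $Y^{\sigma}(\sigma u,z)\sigma v=\sigma Y(u,z)v$, which simultaneously proves the isomorphism claim once the axioms are checked; the relation $L^{\sigma}(n)=\sigma L(n)\sigma^{-1}$, which gives the Virasoro algebra with the same central charge and shows the $L^{\sigma}(0)$-grading is $V=\bigoplus_{n}\sigma(V_{n})$ (correctly allowing the new grading to differ from the old one when $\sigma$ is not graded); and the conjugation of the Jacobi identity at $\sigma^{-1}u,\sigma^{-1}v$, where the identification $\sigma^{-1}\bigl(Y^{\sigma}(u,z_{0})v\bigr)=Y(\sigma^{-1}u,z_{0})\sigma^{-1}v$ handles the iterate term. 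Nothing is missing.
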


Let $\left(U,\ Y\right)$ be a vertex operator algebra structure
on $U$. First we fix a basis $\left\{ \overline{\mathcal{Y}}_{a,b}^{c}\left(\cdot,z\right)|a,b,c=1,2,3,4\right\} $
for space of intertwining operators of type $\left(\begin{array}{c}
Q_{c}\\
Q_{a}\ Q_{b}
\end{array}\right)$,
$a,b,c\in\left\{ 1,2,3,4\right\} $ as in \cite{FFK}. By lemma \ref{A5nonzerolemma}, without loss
of generality, we can choose a basis $\left\{ \mathcal{Y}\left(\cdot,z\right)|a,b,c=1,2,3,4\right\} $
for space of intertwining operators of type $\left(\begin{array}{c}
P_{c}\\
P_{a}\ P_{b}
\end{array}\right)$,
$a,b,c\in\left\{ 1,2,3,4\right\} $ such that the coefficients $\lambda_{a,b}^{c}=1$
if $N_{a,b}^{c}\not=0$. Now we have $\left(U,Y\right)$,
a vertex operator algebra structure on $U=U^{1}\oplus U^{2}\oplus U^{3}\oplus U^{4}$
such that for any $u^{k},v^{k}\in U^{k}$, $k=1,2,3,4$,

\begin{gather}
Y\left(u^{k},z\right)u^{1}=\mathcal{I}_{k,1}^{k}\left(u^{k},z\right)u^{1}, k\in \left\{2,3,4\right\};\nonumber \\
Y\left(u^{2},z\right)u^{a}=\mathcal{I}_{2,a}^{b}\left(u^{2},z\right)u^{a}, \left\{a,b\right\}= \left\{3,4\right\};\nonumber \\
Y\left(u^{a},z\right)u^{2}=\mathcal{I}_{a,2}^{b}\left(u^{a},z\right)u^{2}, \left\{a,b\right\}= \left\{3,4\right\};\nonumber \\
Y\left(u^{2},z\right)v^{2}=\mathcal{I}_{2,2}^{1}\left(u^{2},z\right)v^{2};\nonumber \\
Y\left(u^{k},z\right)v^{k}=\mathcal{I}_{k,k}^{1}\left(u^{k},z\right)v^{k}+\mathcal{I}_{k,k}^{3}\left(u^{k},z\right)v^{k}+\mathcal{I}_{k,k}^{4}\left(u^{k},z\right)v^{k}, k\in \left\{3,4\right\};\nonumber \\
Y\left(u^{a},z\right)u^{b}=\mathcal{I}_{a,b}^{2}\left(u^{a},z\right)u^{b}+\mathcal{I}_{a,b}^{3}\left(u^{a},z\right)u^{b}+\mathcal{I}_{a,b}^{4}\left(u^{a},z\right)u^{b},  \left\{a,b\right\}= \left\{3,4\right\},
\label{U,Y}
\end{gather}

where $\mathcal{I}_{a,b}^{c}\in I_{U^{1}}\left(_{U^{a}\ U^{b}}^{U^{c}}\right)$,
$a,b,c\in\left\{ 1,2,3,4\right\} $ are nonzero intertwining operators.
Furthermore, for each $u^{i}\in U^{i}$, we write $u^{i}=u_{1}^{i}\otimes u_{2}^{i}$
where $u_{1}^{i}\in P_{i}$ and $u_{2}^{i}\in Q_{i}$, $\mathcal{I}_{a,b}^{c}=\mathcal{Y}_{a,b}^{c}\otimes\overline{\mathcal{Y}}_{a,b}^{c}$
where $\mathcal{Y}_{a,b}^{c}\in I_{P_{1}}\left(_{P_{a}\ P_{b}}^{P_{c}}\right)$,
$\overline{\mathcal{Y}}_{a,b}^{c}\in I_{Q_{1}}\left(_{Q_{a}\ Q_{b}}^{Q_{c}}\right)$
with $a,b,c\in\left\{ 1,2,3,4\right\} $.

\begin{theorem}\label{5Auniquethm}The vertex operator algebra structure on $U$
over $\mathbb{C}$ is unique.\end{theorem}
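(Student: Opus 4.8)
The plan is to reduce, via Remark \ref{5Auniqueremark}, to proving that any two vertex operator algebra structures $(U,Y)$ and $(U,\tilde Y)$ on $U=U^{1}\oplus U^{2}\oplus U^{3}\oplus U^{4}$ are isomorphic. Since each relevant fusion space $I_{U^{1}}\left(_{U^{a}\ U^{b}}^{U^{c}}\right)$ is at most one-dimensional, once I fix bases $\mathcal{I}_{a,b}^{c}=\mathcal{Y}_{a,b}^{c}\otimes\overline{\mathcal{Y}}_{a,b}^{c}$ of intertwining operators (with the $Q$-factors normalized as in \cite{FFK}), each VOA structure is completely encoded by its structure constants $\lambda_{a,b}^{c}$, resp. $\tilde\lambda_{a,b}^{c}$. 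By Lemma \ref{A5nonzerolemma} every constant with $N_{a,b}^{c}\neq0$ is nonzero, so the ratios $\rho_{a,b}^{c}:=\tilde\lambda_{a,b}^{c}/\lambda_{a,b}^{c}$ are well defined. To build the isomorphism I look for a linear map $\sigma$ acting as a scalar $t_{i}$ on each $U^{i}$ with $t_{1}=1$, so that $\sigma\mathbf{1}=\mathbf{1}$ and $\sigma\omega=\omega$. A direct computation of $Y^{\sigma}(u^{a},z)u^{b}=\sigma Y(\sigma^{-1}u^{a},z)\sigma^{-1}u^{b}$ shows that the coefficient of $\mathcal{I}_{a,b}^{c}$ in $Y^{\sigma}$ is $t_{a}^{-1}t_{b}^{-1}t_{c}\,\lambda_{a,b}^{c}$; hence by Lemma \ref{new structure} it suffices to arrange $\tilde Y=Y^{\sigma}$, i.e. to solve the scalar system $\rho_{a,b}^{c}=t_{c}\,t_{a}^{-1}t_{b}^{-1}$. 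In other words, I must show that the collection $(\rho_{a,b}^{c})$ is a coboundary.

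First I would dispose of the constants that are pinned down outright. The creation axiom $Y(u,z)\mathbf{1}=e^{zL(-1)}u$ forces $\lambda_{k,1}^{k}=\tilde\lambda_{k,1}^{k}$, so $\rho_{k,1}^{k}=1=t_{k}t_{1}t_{k}^{-1}$, and the skew-symmetry identities used in the Claims (e.g. \eqref{A5nonzeroskew} and \eqref{A53341}) force $\rho_{a,b}^{c}=\rho_{b,a}^{c}$, making the unknowns $t_{i}$ consistent across transposed products. The remaining content is to propagate these through the quadratic (Jacobi) relations. For this I exploit that both $Y$ and $\tilde Y$ satisfy commutativity: expanding the four-point function $\langle t^{3},Y(v^{4},z_{1})Y(u^{4},z_{2})p^{3}\rangle$ against its opposite ordering exactly as in the computation leading to \eqref{A5nonzerosystem} produces, for each structure, the same homogeneous relations tying the products $\lambda_{4,3}^{3}\lambda_{4,3}^{3}$, $\lambda_{4,2}^{3}\lambda_{4,3}^{2}$, $\lambda_{4,4}^{3}\lambda_{4,3}^{4}$ together through the braiding entries $(B_{3,3}^{4,4})$ and $(\tilde B_{3,3}^{4,4})$. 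Because the braiding matrices are intrinsic to the modules, they are identical for the two structures; dividing the $\tilde Y$-relations by the $Y$-relations therefore turns these quadratic identities into purely multiplicative identities among the $\rho_{a,b}^{c}$ of the form $\rho_{a,b}^{e}\rho_{e,c}^{d}=\rho_{b,c}^{f}\rho_{a,f}^{d}$ on every index triple where the corresponding braiding minor is nonzero.

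The decisive leverage is Lemma \ref{5Anonzero}: nonvanishing of the two $2\times2$ braiding minors guarantees that the relevant rows of \eqref{A5nonzerosystem} are non-degenerate, so these multiplicative identities determine every $\rho_{a,b}^{c}$ once three ``free'' ratios attached to $U^{2},U^{3},U^{4}$ are fixed. I would then set $t_{2},t_{3},t_{4}$ from those free ratios (for instance via $\rho_{2,2}^{1}=t_{2}^{-2}$, $\rho_{3,3}^{1}=t_{3}^{-2}$, $\rho_{4,4}^{1}=t_{4}^{-2}$) and verify $\rho_{a,b}^{c}=t_{c}t_{a}^{-1}t_{b}^{-1}$ for every admissible triple, running through the three families organized by the Claims: the $\{2,3,4\}$-type products, the $\{3,3\}$- and $\{4,4\}$-type products, and the mixed $\{3,4\}$-type products. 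With the $t_{i}$ in hand, $\sigma=\bigoplus_{i}t_{i}\,\mathrm{id}_{U^{i}}$ fixes $\mathbf{1}$ and $\omega$ and satisfies $Y^{\sigma}=\tilde Y$, so Lemma \ref{new structure} gives $(U,Y)\cong(U,Y^{\sigma})=(U,\tilde Y)$. I expect the main obstacle to be precisely this last verification step: showing that the braiding relations are not merely consistent but \emph{rigid} enough to force the coboundary condition on \emph{all} of the coefficients simultaneously, which is exactly why the nonvanishing of the specific minors in Lemma \ref{5Anonzero} is needed and must be checked with care. The $3C$ case would be handled by the same scheme, with $L\!\left(\tfrac{21}{22},0\right)$-modules in place of the $L\!\left(\tfrac{25}{28},0\right)$-modules and Lemma \ref{3Cnonzero} supplying the required nonvanishing braiding entry.
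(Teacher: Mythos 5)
Your overall scheme is the same as the paper's: fix normalized bases of intertwining operators, encode each VOA structure by the scalars $\lambda_{a,b}^{c}$, and produce a diagonal map $\sigma=\oplus_{i}t_{i}\,\mathrm{id}_{U^{i}}$ so that Lemma \ref{new structure} yields the isomorphism; your formula for the coefficients of $Y^{\sigma}$ and the resulting coboundary equation $\rho_{a,b}^{c}=t_{c}t_{a}^{-1}t_{b}^{-1}$ are correct. The gap is in how you propose to verify that the ratios $\rho_{a,b}^{c}$ actually satisfy this equation. Your key claim --- that dividing the $\tilde{Y}$-braiding relations by the $Y$-braiding relations ``turns these quadratic identities into purely multiplicative identities'' --- fails exactly where it is needed. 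For the four-point functions on $\left(U^{3},U^{4},U^{4},U^{3}\right)$ there are three intermediate channels $U^{2},U^{3},U^{4}$, so commutativity does not equate two monomials; it says that the vector of products $\left(\lambda_{4,2}^{3}\lambda_{4,3}^{2},\ \lambda_{4,3}^{3}\lambda_{4,3}^{3},\ \lambda_{4,4}^{3}\lambda_{4,3}^{4}\right)$ solves the linear system (\ref{A5nonzerosystem}) whose coefficients are braiding entries. Sums cannot be divided termwise, so no identity of the form $\rho_{a,b}^{e}\rho_{e,c}^{d}=\rho_{b,c}^{f}\rho_{a,f}^{d}$ follows from these relations; such multiplicative identities arise only from the single-channel four-point functions, and those give exactly the paper's relations (\ref{unique201})--(\ref{unique203}), which by themselves do not pin down $\rho_{3,3}^{3},\rho_{3,3}^{4},\rho_{3,4}^{3},\rho_{4,4}^{4},\dots$.

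The step you defer as ``the main obstacle'' is precisely the theorem's core, and the paper closes it by a different mechanism than division: (i) skew symmetry and the single-channel commutativity relations show that the coefficients fall into groups of equal values $\lambda,\mu,\gamma$, and the unique invariant bilinear form (Remark \ref{self-dual}, a tool you never invoke) gives $\rho_{k,k}^{1}=1$; (ii) since the coefficient vectors of both structures solve the same linear system (\ref{A5nonzerosystem}), subtracting the two systems yields homogeneous relations in $1-\mu^{2}$ and $1-\gamma^{2}$, and the nonvanishing $2\times2$ minors of Lemma \ref{5Anonzero} then force $\mu^{2}=\gamma^{2}=1$ (with $\lambda^{2}=1$ coming from (\ref{unique203})). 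Only after these $\pm1$ statements are established do the scalars $t_{2}=\lambda\mu\gamma$, $t_{3}=\gamma$, $t_{4}=\mu$ verifiably satisfy every instance of the coboundary equation; in particular, your plan to define $t_{k}$ as a square root of $\rho_{k,k}^{1}$ leaves sign-consistency conditions unchecked rather than resolving them. So the architecture of your proof matches the paper, but the decisive rigidity argument is missing, and the substitute you propose (termwise division of the braiding relations) is not valid.
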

\begin{proof}
Let $\left(U,Y\right)$ be the vertex operator algebra structure
as given in (\ref{U,Y}). Suppose $\left(U,\overline{Y}\right)$
is another vertex operator algebra structure on $U$. Without
loss of generality, we may assume $Y\left(u,z\right)=\overline{Y}\left(u,z\right)$
for all $u\in U^{1}$. From our settings above, there exist nonzero
constants $\lambda_{i,1}^{i}$, $\lambda_{2,2}^{1}$, $\lambda_{2,j}^{k}$, $\lambda_{j,2}^{k}$
$\lambda_{3,4}^{p}$,$\lambda_{4,3}^{p}$, $\lambda_{33}^{l}$, $\lambda_{4,4}^{l}$where $i,p=2,3,4$, $\left\{j,k\right\}= \left\{3,4\right\}$,
$l=1,3,4$ such that for any $u^{i},v^{i}\in U^{i}$, $i=1,2,3,4$, we have
\[
\overline{Y}\left(u^{k},z\right)u^{1}=\lambda_{k,1}^{k}\cdot\mathcal{I}_{k,1}^{k}(u^{k},z)u^{1}, k\in \left\{2,3,4\right\};
\]
\[
\overline{Y}\left(u^{2},z\right)v^{2}=\lambda_{2,2}^{1}\cdot\mathcal{I}_{2,2}^{1}(u^{2},z)v^{2};
\]
\[
\overline{Y}\left(u^{2},z\right)u^{a}=\lambda_{2,a}^{b}\cdot\mathcal{I}_{2,a}^{b}(u^{2},z)u^{a}, \left\{a,b\right\}= \left\{3,4\right\};
\]
\[
\overline{Y}\left(u^{a},z\right)u^{2}=\lambda_{a,2}^{b}\cdot\mathcal{I}_{a,2}^{b}(u^{a},z)u^{2}, \left\{a,b\right\}= \left\{3,4\right\};
\]
\[
\overline{Y}\left(u^{k},z\right)v^{k}=\lambda_{k,k}^{1}\cdot\mathcal{I}_{k,k}^{1}\left(u^{k},z\right)v^{k}+\lambda_{k,k}^{3}\cdot\mathcal{I}_{k,k}^{3}\left(u^{k},z\right)v^{k}+\lambda_{k,k}^{4}\cdot\mathcal{I}_{k,k}^{4}\left(u^{k},z\right)v^{k}, k\in \left\{3,4\right\};
\]
\[
\overline{Y}\left(u^{a},z\right)u^{b}=\lambda_{a,b}^{2}\cdot\mathcal{I}_{a,b}^{2}\left(u^{a},z\right)u^{b}+\lambda_{a,b}^{3}\cdot\mathcal{I}_{a,b}^{3}\left(u^{a},z\right)u^{b}+\lambda_{a,b}^{4}\cdot\mathcal{I}_{a,b}^{4}\left(u^{a},z\right)u^{b}, \left\{a,b\right\}= \left\{3,4\right\};
\]
where $\mathcal{I}_{a,b}^{c}\in I_{U^{1}}\left(_{U^{a}\ U^{b}}^{U^{c}}\right),$
$a,b,c\in\left\{ 1,2,3,4\right\} $ are nonzero intertwining operators.

\emph{Claim 1) $\lambda_{k,1}^{k}=1, k\in \left\{2,3,4\right\}. $}

For any $u^{1}\in U^{1}$, $u^{k}\in U^{k},  k\in \left\{2,3,4\right\}$, skew symmetry of $Y\left(\cdot,z\right)$
and $\overline{Y}\left(\cdot,z\right)$ ( \cite{FHL} ) imply

\[
\overline{Y}(u^{k},z)u^{1}=e^{zL\left(-1\right)}\overline{Y}\left(u^{1},-z\right)u^{k}=e^{zL\left(-1\right)}Y(u^{1},-z)u^{k}=Y\left(u^{k},z\right)u^{1}=\mathcal{I}_{k,1}^{k}\left(u^{k},z\right)u^{1}.
\]
In the mean time, $\overline{Y}\left(u^{k},z\right)u^{1}=\lambda_{k,1}^{k}\cdot\mathcal{I}_{k,1}^{k}(u^{k},z)u^{1}$.
Thus we get $\lambda_{k,1}^{k}=1$.

\emph{Claim 2) $\lambda_{k,k}^{1}=1, k\in \left\{2,3,4\right\}. $}

Note that by Remark \ref{self-dual}, $U$ has a unique
invariant bilinear form $\left\langle \cdot,\cdot\right\rangle $
with $\left\langle 1,1\right\rangle =1$. For $u^{1}\in U^{1}$ and
$u^{k},v^{k}\in U^{k},  k\in \left\{2,3,4\right\}$, we have
\[
\left\langle Y\left(u^{k},z)v^{k}\right),u^{1}\right\rangle =\left\langle v^{k},Y\left(e^{zL\left(-1\right)}\left(-z^{-2}\right)^{L\left(0\right)}u^{k},z^{-1}\right)u^{1}\right\rangle .
\]
That is,
\[
\left\langle \mathcal{I}_{k,k}^{1}\left(u^{k},z\right)v^{k},u^{1}\right\rangle =\left\langle v^{k},\mathcal{I}_{k,1}^{k}\left(e^{zL\left(-1\right)}\left(-z^{-2}\right)^{L\left(0\right)}u^{k},z^{-1}\right)u^{1}\right\rangle .
\]
The invariant bilinear form on $\left(U,\overline{Y}\right)$
gives
\[
\left\langle \lambda_{k,k}^{1}\cdot\mathcal{I}_{k,k}^{1}\left(u^{k},z\right)v^{k},u^{1}\right\rangle =\left\langle v^{k},\lambda_{k,1}^{k}\cdot\mathcal{I}_{k,1}^{k}\left(e^{zL\left(-1\right)}\left(-z^{-2}\right)^{L\left(0\right)}u^{k},z^{-1}\right)u^{1}\right\rangle .
\]
Using  claim 1, we get $\lambda_{k,k}^{1}=1$.

\emph{Claim 3) $\lambda_{2,3}^{4}=\lambda_{3,2}^{4}=\lambda_{2,4}^{3}=\lambda_{4,2}^{3}=\lambda_{4,3}^{2}=\lambda_{3,4}^{2}=\lambda$,
 $\lambda^2=1$. }

Let $u^{2}\in U^{2}$, $u^{3}\in U^{3}$, $u^4 \in U^{4}$. by skew symmetry of
$Y$ we obtain
\[
\left\langle Y\left(u^{2},z\right)u^{3},u^{4}\right\rangle =\left\langle e^{zL\left(-1\right)}Y\left(u^{3},-z\right)u^{2},u^{4}\right\rangle ,
\]
that is,
\[
\left\langle \mathcal{I}_{2,3}^{4}\left(u^{2},z\right)u^{3}u^{4}\right\rangle =\left\langle e^{zL\left(-1\right)}\mathcal{I}_{3,2}^{4}\left(u^{3},-z\right)u^{2},u^{4}\right\rangle .
\]
Skew symmetry of $\overline{Y}$ gives
\[
\lambda_{2,3}^{4}\left\langle \mathcal{I}_{2,3}^{4}\left(u^{2},z\right)u^{3},u^{4}\right\rangle =\lambda_{3,2}^{4}\left\langle e^{zL\left(-1\right)}\mathcal{I}_{3,2}^{4}\left(u^{3},-z\right)u^{2},u^{4}\right\rangle .
\]
The above two identities gives $\lambda_{2,3}^{4}=\lambda_{3,2}^{4}.$
Similarly, we can prove $\lambda_{2,4}^{3}=\lambda_{4,2}^{3}$, $\lambda_{3,4}^{2}=\lambda_{4,3}^{2}$.

For any $u^{1}\in U^{1},$ $u^{2},v^{2}\in U^{2}$, $u^{3}, v^{3}\in U^{3}$ and $u^{4}\in U^{4}$,
Community of $Y$ implies
\begin{alignat}{1}
\iota_{12}^{-1} & \left\langle u^{1},\mathcal{I}_{2,2}^{1}\left(u^{2},z_{1}\right)\mathcal{I}_{3,4}^{2}\left(u^{3},z_{2}\right)u^{4}\right\rangle \nonumber \\
 & =\iota_{21}^{-1}\left\langle u^{1},\mathcal{I}_{3,3}^{1}\left(u^{3},z_{2}\right)\mathcal{I}_{2,4}^{3}\left(u^{2},z_{1}\right)u^{4}\right\rangle ,  \nonumber
\end{alignat}

Community of $\overline{Y}$ implies
\begin{alignat}{1}
\iota_{12}^{-1} & \left\langle u^{1},\lambda_{2,2}^{1}\lambda_{3,4}^{2}\cdot\mathcal{I}_{2,2}^{1}\left(u^{2},z_{1}\right)\mathcal{I}_{3,4}^{2}\left(u^{3},z_{2}\right)u^{4}\right\rangle \nonumber \\
 & =\iota_{21}^{-1}\left\langle u^{1},\lambda_{3,3}^{1}\lambda_{2,4}^{3}\cdot\mathcal{I}_{3,3}^{1}\left(u^{3},z_{2}\right)\mathcal{I}_{2,4}^{3}\left(u^{2},z_{1}\right)u^{4}\right\rangle ,  \nonumber
\end{alignat}
The above two identities and claim 2) together give us
\begin{equation}
\lambda_{3,4}^{2}=\lambda_{2,4}^{3}.\label{unique201}
\end{equation}
Similarly, from (\ref{A5nonzerocom2342}) and (\ref{A5nonzerocom2343}), we can get
\begin{equation}
\lambda_{3,2}^{4}=\lambda_{4,2}^{3}\label{unique202},
\end{equation}
\begin{equation}
\lambda_{3,4}^{2}\cdot\lambda_{2,3}^{4}=1\label{unique203}.
\end{equation}
So $\lambda_{2,3}^{4}=\lambda_{3,2}^{4}=\lambda_{2,4}^{3}=\lambda_{4,2}^{3}=\lambda_{4,3}^{2}=\lambda_{3,4}^{2}=\lambda$,
 $\lambda^2=1$.

\emph{Claim 4) $\lambda_{3,4}^{3}=\lambda_{4,3}^{4}=\lambda_{3,3}^{4}=\lambda_{4,4}^{4}=\mu$, $\lambda_{3,4}^{4}=\lambda_{4,3}^{4}=\lambda_{4,4}^{3}=\lambda_{3,3}^{3}=\gamma$,
 $\mu^2=\gamma^2=1$. }

The proof of equalities $\lambda_{3,4}^{3}=\lambda_{4,3}^{4}=\lambda_{3,3}^{4}=\lambda_{4,4}^{4}$, $\lambda_{3,4}^{4}=\lambda_{4,3}^{4}=\lambda_{4,4}^{3}=\lambda_{3,3}^{3}$ is similar to Claim 3, we denote them as $\mu, \gamma$ respectively. Now we mainly focus on the proof
 $\mu^2=\gamma^2=1$.

Consider the four point functions on $\left(U^{3},U^{4},U^{4},U^{3}\right).$ For $\left(U,Y\right)$ and $\left(U,\overline{Y}\right)$ , a similar process as (\ref{A5nonzerosystem}) gives
\[
\begin{cases}
\left(B_{3,3}^{4,4}\right)_{2,2}\cdot\left(\tilde{B}_{3,3}^{4,4}\right)_{2,2}+\left(B_{3,3}^{4,4}\right)_{3,2}\cdot\left(\tilde{B}_{3,3}^{4,4}\right)_{3,2}+\left(B_{3,3}^{4,4}\right)_{4,2}\cdot\left(\tilde{B}_{3,3}^{4,4}\right)_{4,2}=1\\
\left(B_{3,3}^{4,4}\right)_{2,3}\cdot\left(\tilde{B}_{3,3}^{4,4}\right)_{2,3}+\left(B_{3,3}^{4,4}\right)_{3,3}\cdot\left(\tilde{B}_{3,3}^{4,4}\right)_{3,3}+\left(B_{3,3}^{4,4}\right)_{4,3}\cdot\left(\tilde{B}_{3,3}^{4,4}\right)_{4,3}=1\\
\left(B_{3,3}^{4,4}\right)_{2,4}\cdot\left(\tilde{B}_{3,3}^{4,4}\right)_{2,4}+\left(B_{3,3}^{4,4}\right)_{3,4}\cdot\left(\tilde{B}_{3,3}^{4,4}\right)_{3,4}+\left(B_{3,3}^{4,4}\right)_{4,4}\cdot\left(\tilde{B}_{3,3}^{4,4}\right)_{4,4}=1\\
\left(B_{3,3}^{4,4}\right)_{2,2}\cdot\left(\tilde{B}_{3,3}^{4,4}\right)_{2,3}+\left(B_{3,3}^{4,4}\right)_{3,2}\cdot\left(\tilde{B}_{3,3}^{4,4}\right)_{3,3}+\left(B_{3,3}^{4,4}\right)_{4,2}\cdot\left(\tilde{B}_{3,3}^{4,4}\right)_{4,3}=0\\
\left(B_{3,3}^{4,4}\right)_{2,2}\cdot\left(\tilde{B}_{3,3}^{4,4}\right)_{2,4}+\left(B_{3,3}^{4,4}\right)_{3,2}\cdot\left(\tilde{B}_{3,3}^{4,4}\right)_{3,4}+\left(B_{3,3}^{4,4}\right)_{4,2}\cdot\left(\tilde{B}_{3,3}^{4,4}\right)_{4,4}=0\\
\left(B_{3,3}^{4,4}\right)_{2,3}\cdot\left(\tilde{B}_{3,3}^{4,4}\right)_{2,2}+\left(B_{3,3}^{4,4}\right)_{3,3}\cdot\left(\tilde{B}_{3,3}^{4,4}\right)_{3,2}+\left(B_{3,3}^{4,4}\right)_{4,3}\cdot\left(\tilde{B}_{3,3}^{4,4}\right)_{4,2}=0\\
\left(B_{3,3}^{4,4}\right)_{2,3}\cdot\left(\tilde{B}_{3,3}^{4,4}\right)_{2,4}+\left(B_{3,3}^{4,4}\right)_{3,3}\cdot\left(\tilde{B}_{3,3}^{4,4}\right)_{3,4}+\left(B_{3,3}^{4,4}\right)_{4,3}\cdot\left(\tilde{B}_{3,3}^{4,4}\right)_{4,4}=0\\
\left(B_{3,3}^{4,4}\right)_{2,4}\cdot\left(\tilde{B}_{3,3}^{4,4}\right)_{2,2}+\left(B_{3,3}^{4,4}\right)_{3,4}\cdot\left(\tilde{B}_{3,3}^{4,4}\right)_{3,2}+\left(B_{3,3}^{4,4}\right)_{4,4}\cdot\left(\tilde{B}_{3,3}^{4,4}\right)_{4,2}=0\\
\left(B_{3,3}^{4,4}\right)_{2,4}\cdot\left(\tilde{B}_{3,3}^{4,4}\right)_{2,3}+\left(B_{3,3}^{4,4}\right)_{3,4}\cdot\left(\tilde{B}_{3,3}^{4,4}\right)_{3,3}+\left(B_{3,3}^{4,4}\right)_{4,4}\cdot\left(\tilde{B}_{3,3}^{4,4}\right)_{4,3}=0\\
\end{cases}
\]
\[
\begin{cases}
\left(B_{3,3}^{4,4}\right)_{2,2}\cdot\left(\tilde{B}_{3,3}^{4,4}\right)_{2,2}+\mu^2\cdot\left(B_{3,3}^{4,4}\right)_{3,2}\cdot\left(\tilde{B}_{3,3}^{4,4}\right)_{3,2}+\gamma^2\cdot\left(B_{3,3}^{4,4}\right)_{4,2}\cdot\left(\tilde{B}_{3,3}^{4,4}\right)_{4,2}=1\\
\left(B_{3,3}^{4,4}\right)_{2,3}\cdot\left(\tilde{B}_{3,3}^{4,4}\right)_{2,3}+\mu^2\cdot\left(B_{3,3}^{4,4}\right)_{3,3}\cdot\left(\tilde{B}_{3,3}^{4,4}\right)_{3,3}+\gamma^2\cdot\left(B_{3,3}^{4,4}\right)_{4,3}\cdot\left(\tilde{B}_{3,3}^{4,4}\right)_{4,3}=\mu^2\\
\left(B_{3,3}^{4,4}\right)_{2,4}\cdot\left(\tilde{B}_{3,3}^{4,4}\right)_{2,4}+\mu^2\cdot\left(B_{3,3}^{4,4}\right)_{3,4}\cdot\left(\tilde{B}_{3,3}^{4,4}\right)_{3,4}+\gamma^2\cdot\left(B_{3,3}^{4,4}\right)_{4,4}\cdot\left(\tilde{B}_{3,3}^{4,4}\right)_{4,4}=\gamma^2\\
\left(B_{3,3}^{4,4}\right)_{2,2}\cdot\left(\tilde{B}_{3,3}^{4,4}\right)_{2,3}+\mu^2\cdot\left(B_{3,3}^{4,4}\right)_{3,2}\cdot\left(\tilde{B}_{3,3}^{4,4}\right)_{3,3}+\gamma^2\cdot\left(B_{3,3}^{4,4}\right)_{4,2}\cdot\left(\tilde{B}_{3,3}^{4,4}\right)_{4,3}=0\\
\left(B_{3,3}^{4,4}\right)_{2,2}\cdot\left(\tilde{B}_{3,3}^{4,4}\right)_{2,4}+\mu^2\cdot\left(B_{3,3}^{4,4}\right)_{3,2}\cdot\left(\tilde{B}_{3,3}^{4,4}\right)_{3,4}+\gamma^2\cdot\left(B_{3,3}^{4,4}\right)_{4,2}\cdot\left(\tilde{B}_{3,3}^{4,4}\right)_{4,4}=0\\
\left(B_{3,3}^{4,4}\right)_{2,3}\cdot\left(\tilde{B}_{3,3}^{4,4}\right)_{2,2}+\mu^2\cdot\left(B_{3,3}^{4,4}\right)_{3,3}\cdot\left(\tilde{B}_{3,3}^{4,4}\right)_{3,2}+\gamma^2\cdot\left(B_{3,3}^{4,4}\right)_{4,3}\cdot\left(\tilde{B}_{3,3}^{4,4}\right)_{4,2}=0\\
\left(B_{3,3}^{4,4}\right)_{2,3}\cdot\left(\tilde{B}_{3,3}^{4,4}\right)_{2,4}+\mu^2\cdot\left(B_{3,3}^{4,4}\right)_{3,3}\cdot\left(\tilde{B}_{3,3}^{4,4}\right)_{3,4}+\gamma^2\cdot\left(B_{3,3}^{4,4}\right)_{4,3}\cdot\left(\tilde{B}_{3,3}^{4,4}\right)_{4,4}=0\\
\left(B_{3,3}^{4,4}\right)_{2,4}\cdot\left(\tilde{B}_{3,3}^{4,4}\right)_{2,2}+\mu^2\cdot\left(B_{3,3}^{4,4}\right)_{3,4}\cdot\left(\tilde{B}_{3,3}^{4,4}\right)_{3,2}+\gamma^2\cdot\left(B_{3,3}^{4,4}\right)_{4,4}\cdot\left(\tilde{B}_{3,3}^{4,4}\right)_{4,2}=0\\
\left(B_{3,3}^{4,4}\right)_{2,4}\cdot\left(\tilde{B}_{3,3}^{4,4}\right)_{2,3}+\mu^2\cdot\left(B_{3,3}^{4,4}\right)_{3,4}\cdot\left(\tilde{B}_{3,3}^{4,4}\right)_{3,3}+\gamma^2\cdot\left(B_{3,3}^{4,4}\right)_{4,4}\cdot\left(\tilde{B}_{3,3}^{4,4}\right)_{4,3}=0\\
\end{cases}
\]
the two systems above give

\[
\begin{cases}
(1-\mu^2)\cdot\left(B_{3,3}^{4,4}\right)_{3,2}\cdot\left(\tilde{B}_{3,3}^{4,4}\right)_{3,2}+(1-\gamma^2)\cdot\left(B_{3,3}^{4,4}\right)_{4,2}\cdot\left(\tilde{B}_{3,3}^{4,4}\right)_{4,2}=0\\
(1-\mu^2)\cdot\left(B_{3,3}^{4,4}\right)_{3,3}\cdot\left(\tilde{B}_{3,3}^{4,4}\right)_{3,3}+(1-\gamma^2)\cdot\left(B_{3,3}^{4,4}\right)_{4,3}\cdot\left(\tilde{B}_{3,3}^{4,4}\right)_{4,3}=1-\mu^2\\
(1-\mu^2)\cdot\left(B_{3,3}^{4,4}\right)_{3,4}\cdot\left(\tilde{B}_{3,3}^{4,4}\right)_{3,4}+(1-\gamma^2)\cdot\left(B_{3,3}^{4,4}\right)_{4,4}\cdot\left(\tilde{B}_{3,3}^{4,4}\right)_{4,4}=1-\gamma^2\\
(1-\mu^2)\cdot\left(B_{3,3}^{4,4}\right)_{3,2}\cdot\left(\tilde{B}_{3,3}^{4,4}\right)_{3,3}+(1-\gamma^2)\cdot\left(B_{3,3}^{4,4}\right)_{4,2}\cdot\left(\tilde{B}_{3,3}^{4,4}\right)_{4,3}=0\\
(1-\mu^2)\cdot\left(B_{3,3}^{4,4}\right)_{3,2}\cdot\left(\tilde{B}_{3,3}^{4,4}\right)_{3,4}+(1-\gamma^2)\cdot\left(B_{3,3}^{4,4}\right)_{4,2}\cdot\left(\tilde{B}_{3,3}^{4,4}\right)_{4,4}=0\\
(1-\mu^2)\cdot\left(B_{3,3}^{4,4}\right)_{3,3}\cdot\left(\tilde{B}_{3,3}^{4,4}\right)_{3,2}+(1-\gamma^2)\cdot\left(B_{3,3}^{4,4}\right)_{4,3}\cdot\left(\tilde{B}_{3,3}^{4,4}\right)_{4,2}=0\\
(1-\mu^2)\cdot\left(B_{3,3}^{4,4}\right)_{3,3}\cdot\left(\tilde{B}_{3,3}^{4,4}\right)_{3,4}+(1-\gamma^2)\cdot\left(B_{3,3}^{4,4}\right)_{4,3}\cdot\left(\tilde{B}_{3,3}^{4,4}\right)_{4,4}=0\\
(1-\mu^2)\cdot\left(B_{3,3}^{4,4}\right)_{3,4}\cdot\left(\tilde{B}_{3,3}^{4,4}\right)_{3,2}+(1-\gamma^2)\cdot\left(B_{3,3}^{4,4}\right)_{4,4}\cdot\left(\tilde{B}_{3,3}^{4,4}\right)_{4,2}=0\\
(1-\mu^2)\cdot\left(B_{3,3}^{4,4}\right)_{3,4}\cdot\left(\tilde{B}_{3,3}^{4,4}\right)_{3,3}+(1-\gamma^2)\cdot\left(B_{3,3}^{4,4}\right)_{4,4}\cdot\left(\tilde{B}_{3,3}^{4,4}\right)_{4,3}=0\\
\end{cases}
\]
If $1-\mu^2 \not=0$, by using the first, fourth, ninth equations of the system above, we have
\[
\begin{cases}
(1-\mu^2)\cdot\left(B_{3,3}^{4,4}\right)_{3,3}\cdot\left(\tilde{B}_{3,3}^{4,4}\right)_{3,3}+(1-\gamma^2)\cdot\left(B_{3,3}^{4,4}\right)_{4,3}\cdot\left(\tilde{B}_{3,3}^{4,4}\right)_{4,3}=1-\mu^2\\
(1-\mu^2)\cdot\left(B_{3,3}^{4,4}\right)_{3,2}\cdot\left(\tilde{B}_{3,3}^{4,4}\right)_{3,3}+(1-\gamma^2)\cdot\left(B_{3,3}^{4,4}\right)_{4,2}\cdot\left(\tilde{B}_{3,3}^{4,4}\right)_{4,3}=0\\
(1-\mu^2)\cdot\left(B_{3,3}^{4,4}\right)_{3,4}\cdot\left(\tilde{B}_{3,3}^{4,4}\right)_{3,3}+(1-\gamma^2)\cdot\left(B_{3,3}^{4,4}\right)_{4,4}\cdot\left(\tilde{B}_{3,3}^{4,4}\right)_{4,3}=0\\
\end{cases}
\]
Then by a simple computation we have
\begin{alignat}{1}
\begin{cases}
(1-\mu^2)\cdot\left(B_{3,3}^{4,4}\right)_{3,3}\cdot\left(\tilde{B}_{3,3}^{4,4}\right)_{3,3}+(1-\gamma^2)\cdot\left(B_{3,3}^{4,4}\right)_{4,3}\cdot\left(\tilde{B}_{3,3}^{4,4}\right)_{4,3}=1-\mu^2\\
(1-\mu^2)\cdot\left(\left(B_{3,3}^{4,4}\right)_{3,2}\cdot\left(B_{3,3}^{4,4}\right)_{4,4}-\left(B_{3,3}^{4,4}\right)_{4,2}\cdot\left(B_{3,3}^{4,4}\right)_{3,4}\right)\cdot\left(\tilde{B}_{3,3}^{4,4}\right)_{3,3}=0\\
(1-\gamma^2)\cdot\left(\left(B_{3,3}^{4,4}\right)_{3,2}\cdot\left(B_{3,3}^{4,4}\right)_{4,4}-\left(B_{3,3}^{4,4}\right)_{4,2}\cdot\left(B_{3,3}^{4,4}\right)_{3,4}\right)\cdot\left(\tilde{B}_{3,3}^{4,4}\right)_{4,3}=0 \label{5Athmsys1}\\
\end{cases}
\end{alignat}
By lemma \ref{5Anonzero},  $\left(B_{3,3}^{4,4}\right)_{3,2}\cdot\left(B_{3,3}^{4,4}\right)_{4,4}-\left(B_{3,3}^{4,4}\right)_{4,2}\cdot\left(B_{3,3}^{4,4}\right)_{3,4}\not=0$, so from the last two equations of (\ref{5Athmsys1}) we have $\left(\tilde{B}_{3,3}^{4,4}\right)_{3,3}=0$ and $(1-\gamma^2)\cdot\left(\tilde{B}_{3,3}^{4,4}\right)_{4,3}=0$, But then from
the first equation of (\ref{5Athmsys1}), we get $1-\mu^2=0$ which is a contradiction. So $\mu^2=1$. Similarly, we can show that
$\gamma^2=1$.

\emph{Claim 5) $(U, Y)$ is isomorphic to  $\left(U,\overline{Y}\right)$.}

Define a linear map $\sigma$ such that
\[
\sigma|_{U^{1}}=1,\ \sigma|_{U^{2}}=\lambda\mu\gamma,\ \sigma|_{U^{3}}=\gamma,\ \sigma|_{U^{4}}=\mu
\]
where $\lambda^2=\mu^2=\gamma^2=1$. It is clear that $\sigma$
is a linear isomorphism of $U$. Using Lemma \ref{new structure},
$\sigma$ gives a vertex operator algebra structure $(U,Y^{\sigma})$
with $Y^{\sigma}(u,z)=\sigma Y(\sigma^{-1}u,z)\sigma^{-1}$ which
is isomorphic to $(U,Y)$. It is easy to verify that $Y^{\sigma}\left(u,z\right)=\overline{Y}\left(u,z\right)$
for all $u\in U$. Thus we proved the uniqueness of the vertex
operator algebra structure on $U$ .
\end{proof}
\begin{theorem}
The vertex operator algebra structure on $5A$-algebra $\mathcal{U}$ over $\mathbb{C}$ is unique.
\end{theorem}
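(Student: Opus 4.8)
The plan is to chain together the three reductions already prepared above, so that the statement becomes an immediate corollary of Theorem \ref{5Auniquethm}. The strategy is $\mathcal{U}\to W\to U$: first reduce uniqueness on $\mathcal{U}$ to uniqueness on $W=U^{1}+U^{2}+U^{3}+U^{4}$, then reduce uniqueness on $W$ to uniqueness on $U$, and finally invoke Theorem \ref{5Auniquethm}.

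For the first reduction I would use Lemma \ref{U1234}. Its proof realizes $\mathcal{U}$ as a two-step tower $W\subset U^{1}+\cdots+U^{8}\subset\mathcal{U}$ in which each inclusion is a $\mathbb{Z}_{2}$-graded simple current extension (the quantum dimension of each extending piece equals $1$). By Proposition \ref{simple current} and Remark \ref{simple current extension}, the VOA structure of such an extension is determined uniquely by the module structure of the extended algebra over the base; applying this twice shows that the VOA structure on $\mathcal{U}$ is fixed once the VOA structure on $W$ is fixed, the $W$-module decomposition $\mathcal{U}\cong U^{1}\oplus\cdots\oplus U^{12}$ being given. Hence it suffices to establish uniqueness of the VOA structure on $W$.

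For the second reduction I would use Remark \ref{5Auniqueremark}, which identifies $W=L\!\left(\ha,0\right)\otimes U$. Concretely, the conformal vector of $W$ splits as $\omega=e+f$ with $e=\omega_{L(\ha,0)}\otimes\mathbf{1}$ an Ising vector and $f=\mathbf{1}\otimes\omega_{U}$; the subVOA generated by $e$ is a copy of $L\!\left(\ha,0\right)$, and its commutant in $W$ is $U$. Since $L\!\left(\ha,0\right)$ is rational and $C_{2}$-cofinite, $W$ factors as the tensor product $L\!\left(\ha,0\right)\otimes U$, and because the simple Virasoro VOA $L\!\left(\ha,0\right)$ carries a unique VOA structure, uniqueness of the structure on $W$ follows from uniqueness of the structure on $U$. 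The latter is exactly Theorem \ref{5Auniquethm}, which finishes the proof.

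The only step that is more than bookkeeping is this tensor factorization: one must check that an arbitrary VOA structure on the space $W$ genuinely respects the decomposition $L\!\left(\ha,0\right)\otimes U$, i.e.\ that the distinguished Ising vector $e$ splits the algebra through its commutant, so that Theorem \ref{5Auniquethm} may be applied to the commutant itself rather than to some a priori unrelated structure. Granting this (as in Remark \ref{5Auniqueremark}), the remainder is the routine assembly of Lemma \ref{U1234}, Remark \ref{5Auniqueremark}, and Theorem \ref{5Auniquethm}.
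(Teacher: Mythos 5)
Your proposal is correct and follows essentially the same route as the paper, whose proof simply cites Remark \ref{5Auniqueremark} (the reduction $\mathcal{U}\to W\to U$ via Lemma \ref{U1234} and the factorization $W=L\left(\frac{1}{2},0\right)\otimes U$) together with Theorem \ref{5Auniquethm}. Your additional commentary on why the tensor factorization through the Ising vector's commutant is legitimate is a reasonable elaboration of what the paper leaves implicit in that remark, but it does not change the argument.
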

\begin{proof}
The theorem follows immediately from Remark \ref{5Auniqueremark}
and Theorem \ref{5Auniquethm}.
\end{proof}
\section{Uniqueness of VOA structure of the $3C$-algebra $\mathcal{U}$}

From here forward, we denote
\begin{eqnarray*}
U^{1}=L\left(\frac{1}{2},0\right)\otimes L\left(\frac{21}{22},0\right),\ U^{2}=L\left(\frac{1}{2},0\right)\otimes L\left(\frac{21}{22},8\right),\ U^{3}=L\left(\frac{1}{2},\frac{1}{2}\right)\otimes L\left(\frac{21}{22},\frac{45}{2}\right),\\ U^{4}=L\left(\frac{1}{2},\frac{1}{2}\right)\otimes L\left(\frac{21}{22},\frac{7}{2}\right),\ U^{5}=L\left(\frac{1}{2},\frac{1}{16}\right)\otimes L\left(\frac{21}{22},\frac{31}{16}\right),\ U^{6}=L\left(\frac{1}{2},\frac{1}{16}\right)\otimes L\left(\frac{21}{22},\frac{175}{16}\right).
\end{eqnarray*}
Then from \cite{LYY}, the $3C$-algebra
\[
\mathcal{U}\cong U^{1}\oplus U^{2}\oplus U^{3}\oplus U^{4}\oplus U^{5}\oplus U^{6}.
\]

 \begin{lemma}\label{U12}
Let $W=U^{1}+U^{2}$, then $W$ is a subVOA of $\mathcal{U}$ and the VOA structure of $6A$-algebra $\mathcal{U}$ is uniquely determined by $W$.
\end{lemma}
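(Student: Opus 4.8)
The plan is to realize $\mathcal{U}$ as a tower of two $\mathbb{Z}_2$-graded simple current extensions with $W$ at the bottom, exactly mirroring the proof of Lemma~\ref{U1234}. The organizing principle is the grading of $\mathcal{U}$ by the Ising conformal weight of the first tensor factor: the summands with first factor $L(\tfrac12,0)$ are $U^1,U^2$, those with first factor $L(\tfrac12,\tfrac12)$ are $U^3,U^4$, and those with $L(\tfrac12,\tfrac1{16})$ are $U^5,U^6$.

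First I would record the quantum dimensions of the summands as $U^1$-modules. Using Remark~\ref{product property of qdim}, the Ising values $q\dim L(\tfrac12,0)=q\dim L(\tfrac12,\tfrac12)=1$ and $q\dim L(\tfrac12,\tfrac1{16})=\sqrt2$, and the $S$-matrix formula for the $L(\tfrac{21}{22},0)$-modules (whose Kac labels I read off from Remark~\ref{A5C3pp'}; note in particular that $L(\tfrac{21}{22},\tfrac{45}{2})$ is the nontrivial order-two simple current of $L(\tfrac{21}{22},0)$, of quantum dimension $1$), one computes
\[
q\dim_{U^1}U^1=q\dim_{U^1}U^3=1,\qquad q\dim_{U^1}U^2=q\dim_{U^1}U^4=2+\sqrt3,\qquad q\dim_{U^1}U^5=q\dim_{U^1}U^6=3+\sqrt3 .
\]

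Next I would assemble the tower from the bottom. Since $U^1,U^2$ are the only summands of $\mathcal{U}$ whose first tensor factor is $L(\tfrac12,0)$, and $\{L(\tfrac12,0)\}$ is closed under Ising fusion, the Ising-weight-$0$ part $W=U^1+U^2$ is a subVOA; likewise the Ising-weight-$\{0,\tfrac12\}$ part $Y_1:=U^1+U^2+U^3+U^4$ is a subVOA because $\{L(\tfrac12,0),L(\tfrac12,\tfrac12)\}$ is closed under Ising fusion. On $W$, the complement $U^3+U^4$ is a single irreducible module: its two $U^1$-constituents are interchanged by the fusion $U^2\boxtimes U^3=U^4$ (checked on the Virasoro factors via Theorem~\ref{fusion rules of virasoro modules}), and the interchange is realized by a nonzero product because $\mathcal{U}$ is simple. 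By the multiplicativity of quantum dimension along $U^1\subset W$,
\[
q\dim_{W}(U^3+U^4)=\frac{q\dim_{U^1}(U^3+U^4)}{q\dim_{U^1}W}=\frac{3+\sqrt3}{3+\sqrt3}=1,
\]
so by Proposition~\ref{qdim of simple current}(ii) it is a simple current and $Y_1=W\oplus(U^3+U^4)$ is a $\mathbb{Z}_2$-graded simple current extension. The identical argument one layer up, using $q\dim_{U^1}(U^5+U^6)=q\dim_{U^1}Y_1=6+2\sqrt3$ and the mixing $U^2\boxtimes U^5\supseteq U^6$, shows $\mathcal{U}=Y_1\oplus(U^5+U^6)$ is a $\mathbb{Z}_2$-graded simple current extension.

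Applying Remark~\ref{simple current extension} to the two layers then finishes the proof: the VOA structure of $Y_1$ is determined by the $W$-module structure of $Y_1$, and the VOA structure of $\mathcal{U}$ is determined by the $Y_1$-module structure of $\mathcal{U}$; chaining these gives that the VOA structure of $\mathcal{U}$ is uniquely determined by $W$. I expect the main obstacle to be not this conceptual skeleton but the representation-theoretic bookkeeping underneath it: matching each $L(\tfrac{21}{22},h)$ to its Kac label and evaluating the quantum dimensions correctly, and—most delicately—certifying that each complementary layer is genuinely \emph{irreducible} over the smaller algebra. The latter is where one must combine the single-channel Virasoro fusion rules with the simplicity of $\mathcal{U}$ to guarantee that the product interchanging the two constituents of a layer is actually nonzero, rather than merely permitted by the fusion rules.
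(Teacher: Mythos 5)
Your proposal is correct and follows essentially the same route as the paper: compute the quantum dimensions of the $U^i$ over $U^1$ (your closed forms $2+\sqrt3$ and $3+\sqrt3$ agree with the paper's $\sin(\frac{5\pi}{12})/\sin(\frac{\pi}{12})$ and $\sqrt2\sin(\frac{4\pi}{12})/\sin(\frac{\pi}{12})$), build the same two-layer tower $W\subset U^1+U^2+U^3+U^4\subset\mathcal{U}$ of $\mathbb{Z}_2$-graded simple current extensions via the fusion rules and Proposition \ref{qdim of simple current}, and conclude with Remark \ref{simple current extension}. The extra details you supply (the Kac labels, the fusion $U^2\boxtimes U^3=U^4$, and the nonvanishing of products from simplicity) are consistent with, and slightly more explicit than, the paper's argument.
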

\begin{proof}
 The quantum dimensions of $U^{i}$, $i=1,2,3,4,5,6$ as $U_{1}$ modules are as follows:

 \[
q\dim_{U^{1}}U^{1}=q\dim_{U^{1}}U^{3}=1,
\]
\[
 q\dim_{U^{1}}U^{2}=q\dim_{U^{1}}U^{4}=\frac{\sin(\frac{5\pi}{12})}{\sin(\frac{\pi}{12})},
 \]
 \[
  q\dim_{U^{1}}U^{5}=q\dim_{U^{1}}U^{6}=\sqrt{2}\frac{\sin(\frac{4\pi}{12})}{\sin(\frac{\pi}{12})}.
\]

Combining with the fusion rules of $L\left(\frac{1}{2},0\right)$ modules and $L\left(\frac{21}{22},0\right)$ modules, we can get that $U_{1}+U_{2}+U_{3}+U_{4}$ is a subVOA of $\mathcal{U}$,
$U^{5}+U^{6}$ is a simple module of $U^{1}+U^{2}+U^{3}+U^{4}$ and $q\dim_{U^{1}+...+U^{4}}U^{5}+U^{6}=1$. By proposition \ref{qdim of simple current}, $U^{5}+U^{6}$ is a simple current module of $U^{1}+U^{2}+U^{3}+U^{4}$. Similarly we can get $W=U^{1}+U^{2}$ is a subVOA of $U^{1}+U^{2}+U^{3}+U^{4}$, $U^{3}+U^{4}$ is a simple current module of $W$. Hence $W$ is a subVOA of $\mathcal{U}$ and by Remark \ref{simple current extension}, the VOA structure of $3C$-algebra $\mathcal{U}$ is uniquely determined by $W$.
So in order to prove uniqueness of VOA structure on $\mathcal{U}$, we only need to show the uniqueness of VOA structure on $U^{1}+U^{2}$. Since $U^{1}+U^{2}=L\left(\frac{1}{2},0\right)
\otimes \left(L\left(\frac{21}{22},0\right)+L\left(\frac{21}{22},8\right)\right)$, so it is enough to show the uniqueness of VOA structure on $L\left(\frac{21}{22},0\right)+L\left(\frac{21}{22},8\right)$.
\end{proof}

\begin{remark}\label{3Cuniqueremark}
 Lemma \ref{U12} tells us in order to prove uniqueness of VOA structure on $\mathcal{U}$, we only need to show the uniqueness of VOA structure on $W=U^{1}+U^{2}$. On the other hand, $W=L\left(\frac{1}{2},0\right)\otimes U$, here $U=L\left(\frac{21}{22},0\right)+L\left(\frac{21}{22},8\right)$. Now we only need to prove uniqueness of VOA structure on $U$. \end{remark}

\begin{remark} \label{3Cself-dual}Since $U_{1}=0$ and $\dim U_{0}=1$
by Theorem \ref{bilinear form}, there is a unique bilinear form on
$U$ and thus $U'\cong U$. Without loss
of generality, we can identify $U$ with $U'$.\end{remark}

From now on, we let $U=U^{1}+U^{2}$, $U^{1}=L\left(\frac{21}{22},0\right)$, $U^{2}=L\left(\frac{21}{22},8\right)$.
Let $\left(U,Y\right)$ be a vertex operator algebra structure
on $U$ with

\[
Y\left(u,z\right)=\sum_{a,b,c\in\left\{ 1,2\right\} }\text{\ensuremath{\lambda}}_{a,b}^{c}\cdot\mathcal{I}_{a,b}^{c}\left(u^{a},z\right)u^{b}
\]
where $\mathcal{I}_{a,b}^{c}$, $a,b,c\in\left\{ 1,2\right\} $ is a basis of $ I_{U^{1}}\left(_{U^{a}\ U^{b}}^{U^{c}}\right)$ .
\begin{lemma} $\lambda_{a,b}^{c}\not=0$ if $N_{a,b}^{c}=\dim I_{U^{1}}\left(_{U^{a}\ U^{b}}^{U^{c}}\right)\not=0$.\label{lemmaunique3C}
 \end{lemma}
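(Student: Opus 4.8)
We must show that for the $3C$-algebra reduction $U = U^1 \oplus U^2$ with $U^1 = L(\tfrac{21}{22},0)$, $U^2 = L(\tfrac{21}{22},8)$, the structure constant $\lambda_{a,b}^c$ is nonzero whenever the fusion rule $N_{a,b}^c \neq 0$. Let me think about which fusion rules are nonzero here and how the proof should proceed, mirroring the $5A$ case (Lemma/Claims in Section 3).

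First, what are the nonzero fusion rules? $U^2 = L(c_{11},h_{7,1})$ with $h=8$. Using the Virasoro fusion rules (Theorem, fusion rules of Virasoro modules) with $p=11$, $p'=12$, the pairs are $(1,1)\leftrightarrow 0$ and $(7,1)\leftrightarrow 8$. So I need $L(h_{7,1}) \boxtimes L(h_{7,1})$. By the admissibility condition for $((m,n),(m',n'),(m'',n''))$ with $n=n'=n''=1$ and $m'=m''=7$: we need $m < m'+m''=14$, $m' < m+m''$ i.e. $7 < m+7$ so $m>0$, $m''<m+m'$ same, $m+m'+m''=m+14<2p=22$ so $m<8$, and $m+14$ odd so $m$ odd. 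Thus $m \in \{1,3,5,7\}$ with $m$ odd and $0<m<8$... but wait, we also need $0<m<p=11$. So admissible $m \in \{1,3,5,7\}$. But the module $U$ only contains $U^1$ (pair $(1,1)$) and $U^2$ (pair $(7,1)$).

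So in the restricted algebra $U$, the relevant nonzero structure constants are $\lambda_{1,1}^1$, $\lambda_{1,2}^2 = \lambda_{2,1}^2$, and $\lambda_{2,2}^1$ (the identity-producing one). The fusion $L(8)\boxtimes L(8) \ni L(0)$ requires $m=1$ admissible with $m'=m''=7$: check $1<14$, $7<1+7=8$ yes, $1+14=15<22$ yes, odd yes — admissible. So $N_{2,2}^1 = 1$. The nontrivial constant to pin down is $\lambda_{2,2}^1 \neq 0$.

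**Proof plan.** I would follow the Claim structure of Theorem/Section 3 exactly. The plan is:

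$\lambda_{2,1}^2 \neq 0$: For $u^2 \in U^2$, $u^1 \in U^1$, apply skew-symmetry of $Y$, writing $Y(u^2,z)u^1 = e^{zL(-1)}Y(u^1,-z)u^2 = \lambda_{1,2}^2\, e^{zL(-1)}\mathcal{I}_{1,2}^2(u^1,-z)u^2$. Since $U^2$ is an irreducible $U^1$-module, the action of $Y(u^1,z)$ on $U^2$ is nonzero, forcing $\lambda_{1,2}^2 \neq 0$, hence $\lambda_{2,1}^2 \neq 0$.

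$\lambda_{2,2}^1 \neq 0$: Use the invariant bilinear form from Remark (self-dual), with $\langle 1,1\rangle = 1$. For $u^2, v^2 \in U^2$ and $u^1 \in U^1$,
\[
\langle \lambda_{2,2}^1\, \mathcal{I}_{2,2}^1(u^2,z)v^2, u^1\rangle = \langle v^2, \lambda_{2,1}^2\, \mathcal{I}_{2,1}^2(e^{zL(-1)}(-z^{-2})^{L(0)}u^2, z^{-1})u^1\rangle.
\]
Since the right side is nonzero (as $\lambda_{2,1}^2 \neq 0$ and the intertwiner is nonzero), we conclude $\lambda_{2,2}^1 \neq 0$. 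This is essentially the content of Claim 2 in the $5A$ proof.

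**Where the braiding matrix enters — and the main obstacle.** Unlike $5A$, here the fusion $U^2 \boxtimes U^2$ inside the \emph{full} Virasoro module category contains $L(h_{3,1}), L(h_{5,1})$ as well, but these do not lie in $U$, so within $U$ there is no multiplicity-higher fusion channel to analyze and no genuine braiding-matrix obstruction of the kind seen in $5A$ Claims 3--4. The role of Lemma (3Cnonzero), namely $(B_{2,2}^{2,2})_{2,1}\neq 0$, is to guarantee that the single-channel four-point function $\langle \cdots Y(u^2,z_1)Y(u^2,z_2)\cdots\rangle$ genuinely propagates the vacuum channel — I expect it is used to rule out the degenerate possibility that $\lambda_{2,2}^1$ vanishes while still allowing associativity/commutativity to hold formally. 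Thus the main obstacle, and the step I would treat most carefully, is confirming that the simple skew-symmetry plus bilinear-form argument suffices, or whether one must additionally invoke the four-point function on $(U^2,U^2,U^2,U^2)$ together with Lemma (3Cnonzero) to exclude $\lambda_{2,2}^1 = 0$. My expectation is that the bilinear-form argument alone settles it, with Lemma (3Cnonzero) reserved for the subsequent uniqueness theorem (the analogue of Theorem (5Auniquethm)) where one must show $\lambda_{2,2}^1 = 1$ up to the rescaling isomorphism $\sigma$.
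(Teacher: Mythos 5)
Your treatment of $\lambda_{2,1}^{2}$ and $\lambda_{2,2}^{1}$ coincides with the paper's Claims 1 and 2 (skew symmetry, then the invariant bilinear form from Remark \ref{3Cself-dual}), but your proposal has a genuine gap: it never proves $\lambda_{2,2}^{2}\not=0$, and indeed never acknowledges that this constant is part of the lemma. Your own admissibility computation contradicts your conclusion: you found the channels $m\in\{1,3,5,7\}$, and $m=7$ is the pair $(7,1)$, i.e.\ $U^{2}$ itself, so $U^{2}$ occurs in $U^{2}\boxtimes U^{2}$ and $N_{2,2}^{2}=1\not=0$. (In Wang's indexing with $p=11$, $q=12$ the module is $(1,7)$ and $(1,7)\boxtimes(1,7)=(1,1)+(1,3)+(1,5)+(1,7)+(1,9)$; your bound $m<8$ should be $m<10$ because the first FFK index is bounded by $p'=12$ rather than $p=11$, but either way the diagonal channel survives.) Hence your list ``$\lambda_{1,1}^{1}$, $\lambda_{2,1}^{2}$, $\lambda_{2,2}^{1}$'' is incomplete, and your assertion that in the $3C$ case there is no further channel to analyze is false: the diagonal fusion $N_{2,2}^{2}=1$ is exactly what makes this lemma, and the subsequent uniqueness theorem, nontrivial.

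The missing claim is the substantial part of the paper's proof, and it is settled there by an argument of a completely different nature from anything in your proposal: assume $\lambda_{2,2}^{2}=0$; then the product closes as $U^{2}\cdot U^{2}\subset U^{1}$, so $U=U^{1}\oplus U^{2}$ is $\mathbb{Z}/2\mathbb{Z}$-graded and $\sigma=\mathrm{id}_{U^{1}}\oplus(-\mathrm{id}_{U^{2}})$ is an order-two automorphism with $U^{\sigma}=U^{1}$. Quantum Galois theory (Theorems \ref{classical galois theory} and \ref{quantum dimension and orbifold module}) then forces $q\dim_{U^{1}}U^{2}=1$, since every irreducible character of $\langle\sigma\rangle$ is one-dimensional; this contradicts $q\dim_{U^{1}}U^{2}=\sin(5\pi/12)/\sin(\pi/12)=2+\sqrt{3}>1$. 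Skew symmetry, the bilinear form, and commutativity alone cannot detect this, so the proposal as written does not prove the lemma. (You are right on one secondary point: Lemma \ref{3Cnonzero} is not used in this lemma; the paper reserves the braiding-matrix computation for Claim 3 of Theorem \ref{3Cuniquethm}, where it yields $(\lambda_{2,2}^{2})^{2}=1$.)
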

\begin{claim}
$\lambda_{2,1}^{2}\not=0$.
\end{claim}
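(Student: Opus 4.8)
The plan is to prove this exactly as in Claim~1 of the $5A$ case (Lemma~\ref{lemmaunique}), by invoking skew symmetry of the vertex operator $Y$ together with the irreducibility of $U^{2}$ as a $U^{1}$-module. First I would record the relevant fusion rules: since $U^{1}=L\left(\frac{21}{22},0\right)$ is the vacuum-containing component, $U^{1}\boxtimes U^{1}=U^{1}$ and $U^{1}\boxtimes U^{2}=U^{2}$, so that $\dim I_{U^{1}}\left(_{U^{1}\ U^{2}}^{U^{2}}\right)=\dim I_{U^{1}}\left(_{U^{2}\ U^{1}}^{U^{2}}\right)=1$. Consequently, for $u^{1}\in U^{1}$ and $u^{2}\in U^{2}$ the defining expansion of $Y$ collapses to a single channel in each case, namely $Y\left(u^{1},z\right)u^{2}=\lambda_{1,2}^{2}\cdot\mathcal{I}_{1,2}^{2}\left(u^{1},z\right)u^{2}$ and $Y\left(u^{2},z\right)u^{1}=\lambda_{2,1}^{2}\cdot\mathcal{I}_{2,1}^{2}\left(u^{2},z\right)u^{1}$.

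Next I would observe that $\lambda_{1,2}^{2}\not=0$. Indeed, $U^{2}$ is an irreducible $U^{1}$-module and the restriction of $Y$ to $U^{1}\times U^{2}$ is its module vertex operator; were $\lambda_{1,2}^{2}=0$ we would have $Y\left(\mathbf{1},z\right)u^{2}=0$, contradicting the vacuum axiom $Y\left(\mathbf{1},z\right)u^{2}=u^{2}\not=0$. Then skew symmetry of $Y$ (\cite{FHL}) gives
\[
Y\left(u^{2},z\right)u^{1}=e^{zL\left(-1\right)}Y\left(u^{1},-z\right)u^{2}=\lambda_{1,2}^{2}\cdot e^{zL\left(-1\right)}\mathcal{I}_{1,2}^{2}\left(u^{1},-z\right)u^{2}.
\]
The assignment $\mathcal{Y}\left(\cdot,z\right)\mapsto e^{zL\left(-1\right)}\mathcal{Y}\left(\cdot,-z\right)$ is the skew-symmetry isomorphism from $I_{U^{1}}\left(_{U^{1}\ U^{2}}^{U^{2}}\right)$ onto $I_{U^{1}}\left(_{U^{2}\ U^{1}}^{U^{2}}\right)$, and since both spaces are one-dimensional it sends the nonzero operator $\mathcal{I}_{1,2}^{2}$ to a nonzero multiple of $\mathcal{I}_{2,1}^{2}$. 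Comparing the two expressions for $Y\left(u^{2},z\right)u^{1}$ then forces $\lambda_{2,1}^{2}$ to be a nonzero scalar multiple of $\lambda_{1,2}^{2}$, whence $\lambda_{2,1}^{2}\not=0$.

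The argument is entirely parallel to its $5A$ analogue and presents no real obstacle: the only points requiring verification are the one-dimensionality of the two intertwining spaces (immediate from $U^{1}\boxtimes U^{2}=U^{2}$) and the nonvanishing of $\lambda_{1,2}^{2}$ (the vacuum axiom). The genuinely substantial input for this section, the braiding matrix computation $\left(B_{2,2}^{2,2}\right)_{2,1}\not=0$ of Lemma~\ref{3Cnonzero}, will only be needed later, when the $U^{2}\times U^{2}$ products are analyzed and the coefficient $\lambda_{2,2}^{c}$ in the channel $U^{2}\boxtimes U^{2}=U^{1}+U^{2}$ must be shown nonzero.
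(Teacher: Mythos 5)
Your proof is correct and follows essentially the same route as the paper: skew symmetry identifies $Y\left(u^{2},z\right)u^{1}$ with $e^{zL\left(-1\right)}Y\left(u^{1},-z\right)u^{2}=\lambda_{1,2}^{2}\cdot e^{zL\left(-1\right)}\mathcal{I}_{1,2}^{2}\left(u^{1},-z\right)u^{2}$, and the nonvanishing of $\lambda_{1,2}^{2}$ (which the paper attributes to the irreducibility of $U^{2}$ as a $U^{1}$-module, and which you justify more explicitly via the vacuum axiom) then forces $\lambda_{2,1}^{2}\not=0$. Your added remarks on the one-dimensionality of the intertwining spaces and the skew-symmetry isomorphism merely make explicit what the paper leaves implicit.
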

\begin{proof}
For any $u^{k}\in U^{k}$, $k=1,2$, using skew symmetry of $Y\left(\cdot,z\right)$
(\cite{FHL}), we have

\[
Y(u^{2},z)u^{1}=e^{zL\left(-1\right)}Y\left(u^{1},-z\right)u^{2}=\lambda_{1,2}^{2}\cdot e^{zL\left(-1\right)}\mathcal{I}_{1,2}^{2}\left(u^{1},-z\right)u^{2}=\lambda_{2,1}^{2}\cdot\mathcal{I}_{2,1}^{2}\left(u^{2},z\right)u^{1}.
\]
Since $U^{2}$ is an irreducible $U^{1}$-module, we have $\lambda_{1,2}^{2}\not=0$,
. So $\lambda_{2,1}^{2}\not=0$.
\end{proof}

\begin{claim}
$\lambda_{2,2}^{1}\not=0$.
\end{claim}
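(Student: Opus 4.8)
The plan is to follow the pattern of Claim 2 in the $5A$ case, combining the invariant bilinear form with the previous claim $\lambda_{2,1}^{2}\neq 0$. By Remark \ref{3Cself-dual}, $U$ carries a unique invariant bilinear form $\left\langle\cdot,\cdot\right\rangle$ normalized by $\left\langle\mathbf{1},\mathbf{1}\right\rangle=1$, and it is nondegenerate because $U\cong U'$. For $u^{2},v^{2}\in U^{2}$ and $u^{1}\in U^{1}$, invariance of the form for $\left(U,Y\right)$ gives
\[
\left\langle Y\left(u^{2},z\right)v^{2},u^{1}\right\rangle=\left\langle v^{2},Y\left(e^{zL\left(-1\right)}\left(-z^{-2}\right)^{L\left(0\right)}u^{2},z^{-1}\right)u^{1}\right\rangle .
\]
On the left, once we know $\left\langle U^{1},U^{2}\right\rangle=0$, pairing against $u^{1}\in U^{1}$ detects only the $U^{1}$-component of $Y\left(u^{2},z\right)v^{2}$, namely $\lambda_{2,2}^{1}\cdot\mathcal{I}_{2,2}^{1}\left(u^{2},z\right)v^{2}$; the possible $U^{2}$-component does not contribute. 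On the right, the fusion rule $U^{2}\boxtimes U^{1}=U^{2}$ makes $Y\left(\cdot,z^{-1}\right)u^{1}$ equal to $\lambda_{2,1}^{2}\cdot\mathcal{I}_{2,1}^{2}\left(\cdot,z^{-1}\right)u^{1}$.

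The key simplification is to take $u^{1}=\mathbf{1}$ and invoke the creation property $Y\left(w,z^{-1}\right)\mathbf{1}=e^{z^{-1}L\left(-1\right)}w$, so the identity above becomes
\[
\lambda_{2,2}^{1}\left\langle \mathcal{I}_{2,2}^{1}\left(u^{2},z\right)v^{2},\mathbf{1}\right\rangle=\left\langle v^{2},e^{z^{-1}L\left(-1\right)}e^{zL\left(-1\right)}\left(-z^{-2}\right)^{L\left(0\right)}u^{2}\right\rangle .
\]
For homogeneous $u^{2},v^{2}\in U^{2}$ of equal weight $n$ the operators $L\left(-1\right)$ strictly raise the weight, so only the lowest term survives and the right-hand side collapses to $\left(-z^{-2}\right)^{n}\left\langle v^{2},u^{2}\right\rangle$. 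Hence, provided the form is nonzero on $U^{2}$, the right-hand side is a nonzero series and $\lambda_{2,2}^{1}$ is forced to be nonzero.

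The one point that genuinely needs checking — and the step I expect to be the main obstacle — is the nondegeneracy of $\left\langle\cdot,\cdot\right\rangle$ on $U^{2}$, which in turn rests on $\left\langle U^{1},U^{2}\right\rangle=0$. Here I would use that $U^{1}=L\left(\frac{21}{22},0\right)$ is a sub-VOA of $U$ and that the form is in particular invariant for this Virasoro sub-VOA. A nonzero pairing between $U^{1}$ and $U^{2}$ would produce a nonzero $U^{1}$-module homomorphism $U^{2}\to\left(U^{1}\right)'\cong U^{1}$; since $U^{1}=L\left(\frac{21}{22},0\right)$ and $U^{2}=L\left(\frac{21}{22},8\right)$ are inequivalent irreducible Virasoro modules, Schur's lemma gives $\mathrm{Hom}\left(U^{2},U^{1}\right)=0$, whence $\left\langle U^{1},U^{2}\right\rangle=0$. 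As the global form is nondegenerate and $U=U^{1}\oplus U^{2}$ is then orthogonal, its restriction to $U^{2}$ is nondegenerate, so there exist $u^{2},v^{2}$ with $\left\langle v^{2},u^{2}\right\rangle\neq 0$. Combined with $\lambda_{2,1}^{2}\neq 0$, this closes the argument exactly as in the $5A$ case; note that, unlike there, $U^{2}$ is not a simple current, but since the pairing isolates the $U^{1}$-channel this causes no difficulty.
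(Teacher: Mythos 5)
Your proposal is correct and follows essentially the same route as the paper: both rest on the unique invariant bilinear form of Remark \ref{3Cself-dual} and the invariance identity, which forces $\lambda_{2,2}^{1}\neq 0$ once the right-hand pairing is seen to be nonzero. The only differences are matters of detail: you specialize $u^{1}=\mathbf{1}$ and invoke the creation property where the paper keeps a general $u^{1}$ and cites $\lambda_{2,1}^{2}\neq 0$ from its Claim 1, and you make explicit the orthogonality $\left\langle U^{1},U^{2}\right\rangle =0$ (via Schur's lemma) and the nondegeneracy of the form on $U^{2}$ — steps the paper's terse proof leaves implicit in its identification of $U$ with $U'$.
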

\begin{proof}
By Remark \ref{3Cself-dual}, $U$ has a unique invariant bilinear form $\left\langle \cdot,\cdot\right\rangle $
with $\left\langle 1,1\right\rangle =1$. For $u^{k},v^{k}\in U^{k},$
$k=1,2$, we have
\[
\left\langle Y\left(u^{2},z)v^{2}\right),u^{1}\right\rangle =\left\langle v^{2},Y\left(e^{zL\left(-1\right)}\left(-z^{-2}\right)^{L\left(0\right)}u^{2},z^{-1}\right)u^{1}\right\rangle .
\]
That is,
\[
\left\langle \lambda_{2,2}^{1}\cdot\mathcal{I}_{2,2}^{1}\left(u^{2},z\right)v^{2},u^{1}\right\rangle =\left\langle v^{2},\lambda_{2,1}^{2}\cdot\mathcal{I}_{2,1}^{2}\left(e^{zL\left(-1\right)}\left(-z^{-2}\right)^{L\left(0\right)}u^{2},z^{-1}\right)u^{1}\right\rangle .
\]
Applying previous claim, $\lambda_{2,1}^{2}\not=0$, hence $\lambda_{2,2}^{1}\not=0$.
\end{proof}
\begin{claim}
$\lambda_{2,2}^{2}\not=0$.
\end{claim}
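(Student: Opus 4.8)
The plan is to argue by contradiction: assume $\lambda_{2,2}^{2}=0$ and extract from the commutativity of $Y$ a linear relation forcing an off-diagonal entry of the braiding matrix $B_{2,2}^{2,2}$ to vanish, which contradicts Lemma \ref{3Cnonzero}. The argument parallels Claim 4 in the $5A$ case, but is lighter: there is no tensor-product factorization here, so a single braiding matrix governs everything. As a preliminary one checks $N_{2,2}^{2}\neq0$, so that $\lambda_{2,2}^{2}$ is a genuine coefficient; this follows from Theorem \ref{fusion rules of virasoro modules}, which gives $U^{2}\boxtimes U^{2}=L\left(\frac{21}{22},0\right)+L\left(\frac{21}{22},8\right)+L\left(\frac{21}{22},\frac{5}{6}\right)+L\left(\frac{21}{22},\frac{7}{2}\right)$, whose summand $U^{2}$ shows $N_{2,2}^{2}=1$.

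First I would consider the four-point function on $(U^{2},U^{2},U^{2},U^{2})$, namely $\iota_{12}^{-1}\left\langle u^{2},Y\left(v^{2},z_{1}\right)Y\left(w^{2},z_{2}\right)p^{2}\right\rangle$ with all vectors in $U^{2}$. Expanding $Y\left(w^{2},z_{2}\right)p^{2}$ through the decomposition above and pairing the outcome against $u^{2}\in U^{2}\cong U'$ (Remark \ref{3Cself-dual}), only the intermediate channels $U^{1}$ and $U^{2}$ contribute: the $U^{1}$-channel block carries coefficient $\lambda_{2,2}^{1}\lambda_{2,1}^{2}$, which is nonzero by the two preceding claims, while the $U^{2}$-channel block carries coefficient $\left(\lambda_{2,2}^{2}\right)^{2}$. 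Commutativity of $Y$ (Proposition \ref{extension property}) identifies this with $\iota_{21}^{-1}\left\langle u^{2},Y\left(w^{2},z_{2}\right)Y\left(v^{2},z_{1}\right)p^{2}\right\rangle$, whose expansion is the corresponding sum of u-channel blocks with the same two coefficients.

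Next I would invoke Remark \ref{huangyizhi2} and (\ref{braiding matrix property}) to rewrite each s-channel block as $\sum_{\gamma}\left(B_{2,2}^{2,2}\right)_{\mu,\gamma}$ times the u-channel blocks $\iota_{21}^{-1}\left\langle u^{2},\mathcal{I}_{2,\gamma}^{2}\left(w^{2},z_{2}\right)\mathcal{I}_{2,2}^{\gamma}\left(v^{2},z_{1}\right)p^{2}\right\rangle$, where $\gamma$ runs over the intermediate modules of $U^{2}\boxtimes U^{2}$. Since these blocks form a linearly independent family (as recorded just after Remark \ref{huangyizhi2}), matching coefficients channel by channel yields a linear system in the entries of $B_{2,2}^{2,2}$ and the products of $\lambda$'s. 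Setting $\lambda_{2,2}^{2}=0$ removes the $U^{2}$ s-channel, leaving the single surviving s-channel $U^{1}$; the equation read off from the $U^{2}$ u-channel then becomes $\lambda_{2,2}^{1}\lambda_{2,1}^{2}\cdot\left(B_{2,2}^{2,2}\right)_{2,1}=0$, and since $\lambda_{2,2}^{1}\lambda_{2,1}^{2}\neq0$ this forces the off-diagonal entry $\left(B_{2,2}^{2,2}\right)_{2,1}$ to vanish, contradicting Lemma \ref{3Cnonzero}.

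The main obstacle is the correct bookkeeping of the intermediate channels: $U^{2}\boxtimes U^{2}$ contains $L\left(\frac{21}{22},\frac{5}{6}\right)$ and $L\left(\frac{21}{22},\frac{7}{2}\right)$, neither of which occurs inside $U$, so these appear only on the braiding side and not in the VOA correlation function itself. One must verify that (\ref{braiding matrix property}) genuinely expresses the VOA block in the full u-channel basis, so that the coefficient extraction is legitimate; the equations attached to the two extra channels are not needed for the contradiction, but it is precisely the linear independence of the entire block family that licenses reading off the single relevant relation. A secondary care point is that, unlike the $5A$ case where one braided channel remained nonzero after the vanishing assumption, here the two-channel structure collapses to one, so the contradiction comes directly from an off-diagonal braiding entry rather than from a $2\times2$ minor.
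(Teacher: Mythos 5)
Your overall strategy (commutativity plus the braiding relation (\ref{braiding matrix property}) and linear independence of the $\iota_{21}$-blocks) is set up correctly, but the contradiction you extract cites the wrong entry of the braiding matrix, and this is a genuine gap. In the paper's convention $\left(B_{2,2}^{2,2}\right)_{\mu,\gamma}$, the first index is the intermediate channel of the $\iota_{12}$-expansion (the s-channel) and the second is that of the $\iota_{21}$-expansion; this is exactly how the system in Claim 3 of Theorem \ref{3Cuniquethm} is assembled, where the $\gamma=1$ equation reads $\left(B_{2,2}^{2,2}\right)_{1,1}+\lambda^{2}\left(B_{2,2}^{2,2}\right)_{2,1}=1$. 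Under your hypothesis $\lambda_{2,2}^{2}=0$ the $U^{2}$ s-channel disappears entirely, so every equation you can read off has the form $\lambda_{2,1}^{2}\lambda_{2,2}^{1}\left(B_{2,2}^{2,2}\right)_{1,\gamma}=\delta_{\gamma,1}\,\lambda_{2,1}^{2}\lambda_{2,2}^{1}$: only the \emph{first row} of $B_{2,2}^{2,2}$ ever appears, because the second row always enters multiplied by $\left(\lambda_{2,2}^{2}\right)^{2}=0$. In particular the equation from the $U^{2}$ u-channel is $\lambda_{2,1}^{2}\lambda_{2,2}^{1}\left(B_{2,2}^{2,2}\right)_{1,2}=0$, forcing $\left(B_{2,2}^{2,2}\right)_{1,2}=0$ --- not $\left(B_{2,2}^{2,2}\right)_{2,1}=0$ as you wrote. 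Lemma \ref{3Cnonzero} asserts $\left(B_{2,2}^{2,2}\right)_{2,1}\neq0$ (s-channel $U^{2}$, u-channel $U^{1}$), a different matrix entry, and it can never be brought to bear here precisely because no equation involving the second row survives the assumption $\lambda_{2,2}^{2}=0$. Since the FFK $r'$-matrices are not symmetric in the two channel indices, nonvanishing of $\left(B_{2,2}^{2,2}\right)_{2,1}$ does not formally imply nonvanishing of $\left(B_{2,2}^{2,2}\right)_{1,2}$; to close your argument you would need a separate computation via (\ref{FFK 2.19})--(\ref{FFK 2.21}) of the first-row entry (essentially $r'\left(7,7,7,7\right)_{1,7}$), which is nowhere in the paper.

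It is worth comparing with the paper's own proof, which is a completely different and braiding-free route: if $\lambda_{2,2}^{2}=0$ then $U^{2}\cdot U^{2}\subset U^{1}$, so $\sigma=\mathrm{id}_{U^{1}}\oplus\left(-\mathrm{id}_{U^{2}}\right)$ is an order $2$ automorphism of $U$ with $U^{\sigma}=U^{1}$; quantum Galois theory (Theorems \ref{classical galois theory} and \ref{quantum dimension and orbifold module}) then forces $q\dim_{U^{1}}U^{2}=1$, contradicting $q\dim_{U^{1}}U^{2}=\frac{\sin\left(5\pi/12\right)}{\sin\left(\pi/12\right)}=2+\sqrt{3}$. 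This is exactly the obstruction your collapsed one-channel system fails to detect. A secondary slip: $U^{2}\boxtimes U^{2}$ has five channels, not four; you omitted $L\left(\frac{21}{22},\frac{43}{3}\right)$, the pair $\left(1,9\right)$. That omission does not change the structure of your system, but it matters for the bookkeeping you yourself flag as the main care point.
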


\begin{proof}
Assume $\lambda_{2,2}^{2}=0$. Then we have $U^{1}.U^{2}=U^{2}$, $U^{2}.U^{1}=U^{2},$ $U^{2}.U^{2}=U^{1}$.
Define $\sigma:U^{1}+U^{2}\to U^{1}+U^{2}$ such that $\sigma|_{U^{1}}=1$
and $\sigma|_{U^{2}}=-1$. Then $\sigma$ is an order 2 automorphism
of $U^{1}+U^{2}$ with $\left(U^{1}+U^{2}\right)^{\sigma}=U^{1}$
and $U^{2}$ is a $U^{1}$-module.By Theorem \ref{classical galois theory} and Theorem \ref{quantum dimension and orbifold module}, $q\dim_{U^{1}}U^{2}=1$ because any irreducible representation of the group generated by $\sigma$ is 1-dimensional, contradicting with the fact that
$q\dim_{U^{1}}U^{2}=\frac{\sin(\frac{5\pi}{12})}{\sin(\frac{\pi}{12})}\not=0$.
Therefore, $\lambda_{2,2}^{2}\not=0$.
\end{proof}
Let $\left(U,\ Y\right)$ be a vertex operator algebra structure
on $U$. Without loss
of generality, we can choose a basis $\mathcal{I}_{a,b}^{c}\in I_{U^{1}}\left(_{U^{a}\ U^{b}}^{U^{c}}\right)$, $a,b,c\in\left\{1,2\right\} $ such that the coefficients $\lambda_{a,b}^{c}=1$
if $N_{a,b}^{c}\not=0$. Now we have $\left(U,\ Y\right)$,
a vertex operator algebra structure on $U=U^{1}\oplus U^{2}$
such that for any $u^{k},v^{k}\in U^{k}$, $k=1,2$,

\begin{gather}
Y\left(u^{2},z\right)u^{1}=\mathcal{I}_{2,1}^{2}\left(u^{2},z\right)u^{1};\nonumber \\
Y\left(u^{2},z\right)v^{2}=\mathcal{I}_{2,2}^{2}\left(u^{2},z\right)v^{2}; \nonumber \\
Y\left(u^{2},z\right)v^{2}=\mathcal{I}_{2,2}^{1}\left(u^{2},z\right)v^{2}. \nonumber \\
\label{C3Y}
\end{gather}
\begin{theorem}\label{3Cuniquethm}The vertex operator algebra structure on $U$
over $\mathbb{C}$ is unique.\end{theorem}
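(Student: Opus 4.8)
The plan is to follow the strategy of Theorem~\ref{5Auniquethm}, but the two–module situation makes the bookkeeping much lighter. Let $(U,\overline{Y})$ be a second vertex operator algebra structure on $U=U^{1}\oplus U^{2}$. Since both $Y$ and $\overline{Y}$ restrict to the standard $L\!\left(\frac{21}{22},0\right)$ structure on the vacuum component and make $U^{2}$ into an irreducible $U^{1}$-module, I may assume without loss of generality that $Y=\overline{Y}$ on $U^{1}$. Writing $\overline{Y}$ in the fixed basis of intertwining operators as in~(\ref{C3Y}), its only free data are the scalars $\lambda_{2,1}^{2}$, $\lambda_{2,2}^{1}$, $\lambda_{2,2}^{2}$. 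The first step is to pin down the first two: skew symmetry of $\overline{Y}$ together with $\overline{Y}|_{U^{1}}=Y|_{U^{1}}$ forces $\lambda_{2,1}^{2}=1$, exactly the skew-symmetry computation in the proof of Lemma~\ref{lemmaunique3C}; and the unique invariant bilinear form on $U$ (Remark~\ref{3Cself-dual}) together with $\lambda_{2,1}^{2}=1$ then forces $\lambda_{2,2}^{1}=1$, as in the bilinear-form computation there.

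The heart of the argument is to show $(\lambda_{2,2}^{2})^{2}=1$. For this I would examine the four–point function on $(U^{2},U^{2},U^{2},U^{2})$. Expanding $\iota_{12}^{-1}\langle u^{2},\overline{Y}(v^{2},z_{1})\overline{Y}(w^{2},z_{2})p^{2}\rangle$ over the two admissible intermediate modules $U^{1}$ and $U^{2}$, the terms carry the coefficients $c_{1}=\lambda_{2,2}^{1}\lambda_{2,1}^{2}=1$ and $c_{2}=(\lambda_{2,2}^{2})^{2}$ respectively. Applying the braiding matrix $B_{2,2}^{2,2}$ of~(\ref{braiding matrix property}) to swap the two operators, comparing with the commutativity expansion $\iota_{21}^{-1}\langle u^{2},\overline{Y}(w^{2},z_{2})\overline{Y}(v^{2},z_{1})p^{2}\rangle$, and using Huang's linear independence of the iterated products $\iota_{21}^{-1}\langle u^{2},\mathcal{I}_{2,\gamma}^{2}(w^{2},z_{2})\mathcal{I}_{2,2}^{\gamma}(v^{2},z_{1})p^{2}\rangle$ (Theorem~\ref{huangyizhi1} and Remark~\ref{huangyizhi2}), I obtain the linear system $\sum_{\mu}c_{\mu}\,(B_{2,2}^{2,2})_{\mu,\gamma}=c_{\gamma}$ for $\gamma=1,2$. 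Running the identical computation for the reference structure $Y$, whose coefficients are all $1$, gives the same system with $c_{1}=c_{2}=1$; subtracting the two $\gamma=1$ equations leaves
\[
\bigl((\lambda_{2,2}^{2})^{2}-1\bigr)\,(B_{2,2}^{2,2})_{2,1}=0 .
\]
Since $(B_{2,2}^{2,2})_{2,1}\neq0$ by Lemma~\ref{3Cnonzero}, this yields $(\lambda_{2,2}^{2})^{2}=1$. This coefficient-matching step is where I expect the only real work to lie, both in tracking the braiding indices correctly and in justifying the coefficient comparison via linear independence; the nonvanishing input has already been isolated as Lemma~\ref{3Cnonzero}.

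Finally I would produce the isomorphism by a sign twist. Set $\epsilon=\lambda_{2,2}^{2}=\pm1$ and define $\sigma\colon U\to U$ by $\sigma|_{U^{1}}=1$, $\sigma|_{U^{2}}=\epsilon$. Because $\mathbf{1}$ and $\omega$ both lie in $U^{1}$, we have $\sigma(\mathbf{1})=\mathbf{1}$ and $\sigma(\omega)=\omega$, so Lemma~\ref{new structure} applies and $(U,Y^{\sigma})\cong(U,Y)$ with $Y^{\sigma}(u,z)=\sigma Y(\sigma^{-1}u,z)\sigma^{-1}$. A direct check on each graded component, using $\lambda_{2,1}^{2}=\lambda_{2,2}^{1}=1$ and $\epsilon^{2}=1$, shows $Y^{\sigma}(u^{2},z)u^{1}=\mathcal{I}_{2,1}^{2}(u^{2},z)u^{1}$ and $Y^{\sigma}(u^{2},z)v^{2}=\mathcal{I}_{2,2}^{1}(u^{2},z)v^{2}+\epsilon\,\mathcal{I}_{2,2}^{2}(u^{2},z)v^{2}$, which is precisely $\overline{Y}$, while $Y^{\sigma}=Y=\overline{Y}$ on $U^{1}$. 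Hence $(U,\overline{Y})=(U,Y^{\sigma})\cong(U,Y)$, proving that the vertex operator algebra structure on $U$ is unique.
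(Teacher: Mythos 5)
Your proposal is correct and follows essentially the same route as the paper's own proof: skew symmetry to get $\lambda_{2,1}^{2}=1$, the unique invariant bilinear form of Remark~\ref{3Cself-dual} to get $\lambda_{2,2}^{1}=1$, the braiding-matrix comparison of the two four-point function expansions on $\left(U^{2},U^{2},U^{2},U^{2}\right)$ together with Lemma~\ref{3Cnonzero} to force $\left(\lambda_{2,2}^{2}\right)^{2}=1$, and the sign twist $\sigma$ via Lemma~\ref{new structure} to conclude the isomorphism. The linear system you derive, and the subtraction of the $\gamma=1$ equations yielding $\bigl(\left(\lambda_{2,2}^{2}\right)^{2}-1\bigr)\left(B_{2,2}^{2,2}\right)_{2,1}=0$, is exactly the paper's computation.
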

\begin{proof}
Let $\left(U,Y\right)$ be the vertex operator algebra structure
as given in (\ref{C3Y}). Suppose $\left(U,\overline{Y}\right)$
is another vertex operator algebra structure on $U$. Without
loss of generality, we may assume $Y\left(u,z\right)=\overline{Y}\left(u,z\right)$
for all $u\in U^{1}$. From our settings above, there exist nonzero
constants $\lambda_{2,1}^{2}$, $\lambda_{2,2}^{1}$, $\lambda_{2,2}^{2}$ such that for any $u^{i},v^{i}\in U^{i}$, $i=1,2$, we have
\[
\overline{Y}\left(u^{2},z\right)u^{1}=\lambda_{2,1}^{2}\cdot\mathcal{I}_{2,1}^{2}(u^{2},z)u^{1};
\]
\[
\overline{Y}\left(u^{2},z\right)v^{2}=\lambda_{2,2}^{1}\cdot\mathcal{I}_{2,2}^{1}(u^{2},z)v^{2};
\]
\[
\overline{Y}\left(u^{2},z\right)u^{2}=\lambda_{2,2}^{2}\cdot\mathcal{I}_{2,2}^{2}(u^{2},z)u^{2};
\]
where $\mathcal{I}_{a,b}^{c}\in I_{U^{1}}\left(_{U^{a}\ U^{b}}^{U^{c}}\right),$
$a,b,c\in\left\{1,2\right\} $ are nonzero intertwining operators.\\

\emph{Claim 1) $\lambda_{2,1}^{2}=1. $}

For any $u^{1}\in U^{1}$, $u^{2}\in U^{2}$, skew symmetry of $Y\left(\cdot,z\right)$
and $\overline{Y}\left(\cdot,z\right)$ ( \cite{FHL} ) imply

\[
\overline{Y}(u^{2},z)u^{1}=e^{zL\left(-1\right)}\overline{Y}\left(u^{1},-z\right)u^{2}=e^{zL\left(-1\right)}Y(u^{1},-z)u^{2}=Y\left(u^{2},z\right)u^{1}=\mathcal{I}_{2,1}^{2}\left(u^{2},z\right)u^{1}.
\]
In the mean time, $\overline{Y}\left(u^{2},z\right)u^{1}=\lambda_{2,1}^{2}\cdot\mathcal{I}_{2,1}^{2}(u^{2},z)u^{1}$.
Thus we get $\lambda_{2,1}^{2}=1$.

\emph{Claim 2) $\lambda_{2,2}^{1}=1. $}

Note that by Remark \ref{3Cself-dual}, $U$ has a unique
invariant bilinear form $\left\langle \cdot,\cdot\right\rangle $
with $\left\langle 1,1\right\rangle =1$. For $u^{1}\in U^{1}$ and
$u^{2},v^{2}\in U^{2},$ we have
\[
\left\langle Y\left(u^{2},z)v^{2}\right),u^{1}\right\rangle =\left\langle v^{2},Y\left(e^{zL\left(-1\right)}\left(-z^{-2}\right)^{L\left(0\right)}u^{2},z^{-1}\right)u^{1}\right\rangle .
\]
That is,
\[
\left\langle \mathcal{I}_{2,2}^{1}\left(u^{2},z\right)v^{2},u^{1}\right\rangle =\left\langle v^{2},\mathcal{I}_{2,1}^{2}\left(e^{zL\left(-1\right)}\left(-z^{-2}\right)^{L\left(0\right)}u^{2},z^{-1}\right)u^{1}\right\rangle .
\]
The invariant bilinear form on $\left(U,\overline{Y}\right)$
gives
\[
\left\langle \lambda_{2,2}^{1}\cdot\mathcal{I}_{2,2}^{1}\left(u^{2},z\right)v^{2},u^{1}\right\rangle =\left\langle v^{2},\lambda_{2,1}^{2}\cdot\mathcal{I}_{2,1}^{2}\left(e^{zL\left(-1\right)}\left(-z^{-2}\right)^{L\left(0\right)}u^{2},z^{-1}\right)u^{1}\right\rangle .
\]
Using  claim 1, we get $\lambda_{2,2}^{1}=1$.

\emph{Claim 3) $\lambda_{2,2}^{2}=\pm1. $}

For simplicity, we denote $\lambda_{2,2}^{2}:=\lambda$. Consider the four point functions on $\left(U^{2},U^{2},U^{2},U^{2}\right).$ For $\left(U,\overline{Y}\right)$
Let $p^{2}, t^{2}, u^{2}, v^{2}\in U^{2}$,
we have
\begin{alignat}{1}
 &  \iota_{12}^{-1}\left\langle t^{2},\overline{Y}\left(v^{2},z_{1}\right)\overline{Y}\left(u^{2},z_{2}\right)p^{2}\right\rangle \nonumber \\
 & = \iota_{12}^{-1}\langle t^{2},\mathcal{I}_{2,1}^{2}\left(v^{2},z_{1}\right)\mathcal{I}_{2,2}^{1}\left(u^{2},z_{2}\right)\cdot p^{2}+\lambda^2\mathcal{I}_{2,2}^{2}\left(v^{2},z_{1}\right)\mathcal{I}_{2,2}^{2}\left(u^{2},z_{2}\right)\cdot p^{2}
\rangle\nonumber \\
 & = \iota_{21}^{-1}\langle t^{2},\sum_{i=1,2}\left(B_{2,2}^{2,2}\right)_{1,i}\mathcal{I}_{2,i}^{2}\left(u^{2},z_{2}\right)\mathcal{I}_{2,2}^{i}\left(v^{2},z_{1}\right)\cdot p^{2}+\lambda^{2}\cdot\sum_{i=1,2}\left(B_{2,2}^{2,2}\right)_{2,i}\mathcal{I}_{2,i}^{2}\left(u^{2},z_{2}\right)\mathcal{I}_{2,2}^{i}\left(v^{2},z_{1}\right)\cdot p^{2}\nonumber \rangle
\end{alignat}

In the mean time,
\begin{alignat}{1}
 &  \iota_{21}^{-1}\left\langle t^{2},\overline{Y}\left(u^{2},z_{2}\right)\overline{Y}\left(v^{2},z_{1}\right)p^{2}\right\rangle \nonumber \\
 & = \iota_{21}^{-1}\langle t^{2},\mathcal{I}_{2,1}^{2}\left(u^{2},z_{2}\right)\mathcal{I}_{2,2}^{1}\left(v^{2},z_{1}\right)\cdot p^{2}+\lambda^2\mathcal{I}_{2,2}^{2}\left(u^{2},z_{2}\right)\mathcal{I}_{2,2}^{2}\left(v^{2},z_{1}\right)\cdot p^{2}
\nonumber \rangle
\end{alignat}
Then we can imply that
\[
\begin{cases}
\left(B_{2,2}^{2,2}\right)_{1,1}+\lambda^2\cdot\left(B_{2,2}^{2,2}\right)_{2,1}=1\\
\left(B_{2,2}^{2,2}\right)_{1,2}+\lambda^2\cdot\left(B_{2,2}^{2,2}\right)_{2,2}=\lambda^2\\
\end{cases}
\]
Similarly, for $\left(U,Y\right)$, we have
\[
\begin{cases}
\left(B_{2,2}^{2,2}\right)_{1,1}+\left(B_{2,2}^{2,2}\right)_{2,1}=1\\
\left(B_{2,2}^{2,2}\right)_{1,2}+\left(B_{2,2}^{2,2}\right)_{2,2}=1\\
\end{cases}
\]
From these two systems of equation, we can get
\[
\begin{cases}
(1-\lambda^2)\cdot\left(B_{2,2}^{2,2}\right)_{2,1}=0\\
(1-\lambda^2)\cdot\left(B_{2,2}^{2,2}\right)_{2,2}=1-\lambda^2\\
\end{cases}
\]
By Lemma \ref{3Cnonzero}, we have $\left(B_{2,2}^{2,2}\right)_{2,1}\not=0$, which implies $\lambda^2=1$.

\emph{Claim 4) $(U, Y)$ is isomorphic to  $\left(U,\overline{Y}\right)$.}

Define a linear map $\sigma$ such that
\[
\sigma|_{U^{1}}=1,\ \sigma|_{U^{2}}=\lambda
\]
where $\lambda^2=1$. It is clear that $\sigma$
is a linear isomorphism of $U$. Using Lemma \ref{new structure},
$\sigma$ gives a vertex operator algebra structure $(U,Y^{\sigma})$
with $Y^{\sigma}(u,z)=\sigma Y(\sigma^{-1}u,z)\sigma^{-1}$ which
is isomorphic to $(U,Y)$. It is easy to verify that $Y^{\sigma}\left(u,z\right)=\overline{Y}\left(u,z\right)$
for all $u\in U$. Thus we proved the uniqueness of the vertex
operator algebra structure on $U$ .
\end{proof}

\begin{theorem}
The vertex operator algebra structure on $3C$-algebra $\mathcal{U}$ over $\mathbb{C}$ is unique.
\end{theorem}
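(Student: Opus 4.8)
The plan is to obtain the theorem by assembling the two ingredients already in place: the reduction recorded in Remark \ref{3Cuniqueremark} and the single-factor uniqueness of Theorem \ref{3Cuniquethm}. Indeed, Lemma \ref{U12} exhibits $\mathcal{U}\cong U^{1}\oplus\cdots\oplus U^{6}$ as an iterated simple current extension: $U^{1}+U^{2}+U^{3}+U^{4}$ is a sub-VOA over which $U^{5}+U^{6}$ is a simple current module (its quantum dimension being $1$), and $W=U^{1}+U^{2}$ is a sub-VOA over which $U^{3}+U^{4}$ is a simple current module. By Remark \ref{simple current extension}, the VOA structure at each stage is pinned down by the module structure over the smaller sub-VOA, so the structure on $\mathcal{U}$ is determined by that on $W$.

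First I would use $W=L\left(\ha,0\right)\otimes U$ with $U=L\left(\frac{21}{22},0\right)+L\left(\frac{21}{22},8\right)$, as in Remark \ref{3Cuniqueremark}. Since the tensor factor $L\left(\ha,0\right)$ is the standard Virasoro VOA whose structure is fixed, uniqueness for $W$ reduces to uniqueness for $U$, which is precisely Theorem \ref{3Cuniquethm}. Chaining these reductions, any two VOA structures on $\mathcal{U}$ are forced to agree, so the theorem follows immediately from Remark \ref{3Cuniqueremark} and Theorem \ref{3Cuniquethm}.

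The substantive content --- hence the main obstacle --- lies not in this final assembly but inside Theorem \ref{3Cuniquethm}. There, given two structures $(U,Y)$ and $(U,\overline{Y})$ agreeing on $U^{1}$, one normalizes the intertwining-operator basis so that $\lambda_{2,1}^{2}=\lambda_{2,2}^{1}=1$ via skew-symmetry and the invariant bilinear form of Remark \ref{3Cself-dual}, leaving only the scalar $\lambda=\lambda_{2,2}^{2}$ free. The crux is forcing $\lambda^{2}=1$: comparing the four-point function on $\left(U^{2},U^{2},U^{2},U^{2}\right)$ for the two structures, rewritten through the braiding matrix $B_{2,2}^{2,2}$, yields a linear system whose consistency together with $\left(B_{2,2}^{2,2}\right)_{2,1}\neq 0$ from Lemma \ref{3Cnonzero} gives $\lambda^{2}=1$. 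A rescaling automorphism $\sigma$ with $\sigma|_{U^{2}}=\lambda$ then identifies the two structures by Lemma \ref{new structure}.
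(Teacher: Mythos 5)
Your proposal is correct and follows exactly the paper's own route: the paper proves this theorem in one line by citing Remark \ref{3Cuniqueremark} (itself resting on the simple current reductions of Lemma \ref{U12}) together with Theorem \ref{3Cuniquethm}, which is precisely your assembly. Your additional paragraph recounting the internals of Theorem \ref{3Cuniquethm} (normalization of $\lambda_{2,1}^{2}$, $\lambda_{2,2}^{1}$, the braiding-matrix argument forcing $\lambda^{2}=1$ via Lemma \ref{3Cnonzero}, and the map $\sigma$ from Lemma \ref{new structure}) accurately summarizes work the paper has already done and need not be repeated here.
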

\begin{proof}
The theorem follows immediately from Remark \ref{3Cuniqueremark}
and Theorem \ref{3Cuniquethm}.
\end{proof}

\section{Fusion rules}
In this section, we will use the following result:

\begin{proposition} (\cite{ADL})\label{restriction of fusion rules} Let $V$
be a vertex operator algebra and let $W^{1}$, $W^{2}$, $W^{3}$
be $V$-modules among which $W^{1}$ and $W^{2}$ are irreducible.
Suppose that $V_{0}$ is a vertex operator subalgebra of $V$ (with
the same Virasoro element) and that $N^{1}$ and $N^{2}$ are irreducible
$V_{0}$-modules of $W^{1}$ and $W^{2}$, respectively. Then the
restriction map from $I_{V}\left(_{W^{1}\ W^{2}}^{\ \ W^{3}}\right)$
to $I_{V_{0}}\left(_{N^{1}\ N^{2}}^{\ \ W^{3}}\right)$ is injective.
In particular,
\[
\dim I_{V}\left(_{W^{1\ }W^{2}}^{\ \ W^{3}}\right)\le\dim I_{V_{0}}\left(_{N^{1}\ N^{2}}^{\ \ W^{3}}\right).
\]
\end{proposition}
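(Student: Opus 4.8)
The plan is to show the restriction map is injective, i.e. that an intertwining operator $\mathcal{Y}\in I_{V}\left(_{W^{1}\ W^{2}}^{\ \ W^{3}}\right)$ whose restriction to $N^{1}\times N^{2}$ vanishes must itself be $0$; the asserted dimension inequality is then immediate since the restriction is a linear map of vector spaces. First I would record that the restriction is well defined: viewing $V_{0}$ as a subVOA with the same conformal vector, $W^{1},W^{2},W^{3}$ are $V_{0}$-modules, and the map $u\mapsto\mathcal{Y}(u,z)$ with $u\in N^{1}$ acting on $N^{2}$ satisfies the $V_{0}$-Jacobi identity, so it lies in $I_{V_{0}}\left(_{N^{1}\ N^{2}}^{\ \ W^{3}}\right)$.

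The main tools will be the two consequences of the Jacobi identity in the definition of an intertwining operator: the commutator formula $[a_{m},\mathcal{Y}(w^{1},z)]=\sum_{i\ge0}\binom{m}{i}z^{m-i}\mathcal{Y}(a_{i}w^{1},z)$ for $a\in V$ (where $a_{m}$ acts on $W^{3}$ on the left and on $W^{2}$ on the right), and the iterate (associativity) formula expressing $\mathcal{Y}(a_{m}w^{1},z_{2})$ as a residue in $z_{1}$ of $Y^{W^{3}}(a,z_{1})\mathcal{Y}(w^{1},z_{2})$ and $\mathcal{Y}(w^{1},z_{2})Y^{W^{2}}(a,z_{1})$. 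Using that $W^{1}$ and $W^{2}$ are irreducible $V$-modules, every element of $W^{1}$ (resp. $W^{2}$) lies in some finite stage of the filtration generated from $N^{1}$ (resp. $N^{2}$) by the modes $a_{n}$, $a\in V$. I would first observe that $K=\{w^{1}\in W^{1}:\mathcal{Y}(w^{1},z)w^{2}=0\text{ for all }w^{2}\in W^{2}\}$ is a $V$-submodule of $W^{1}$: this follows cleanly from the iterate formula, since both terms on its right-hand side vanish when $w^{1}\in K$ (the first because $\mathcal{Y}(w^{1},z)W^{2}=0$, the second because $Y^{W^{2}}(a,z_{1})w^{2}\in W^{2}$). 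As $W^{1}$ is irreducible, it then suffices to prove $N^{1}\subseteq K$, i.e. that $\mathcal{Y}(u,z)w^{2}=0$ for every $u\in N^{1}$ and every $w^{2}\in W^{2}$.

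To establish $N^{1}\subseteq K$ I would propagate the hypothesis $\mathcal{Y}(u,z)v=0$ ($u\in N^{1}$, $v\in N^{2}$) outward along the $N^{2}$-filtration of $W^{2}$ by the commutator formula: for $w^{2}=a_{m}w'$ one has $\mathcal{Y}(u,z)a_{m}w'=a_{m}\mathcal{Y}(u,z)w'-\sum_{i}\binom{m}{i}z^{m-i}\mathcal{Y}(a_{i}u,z)w'$, where the first term is controlled by a lower filtration stage in $W^{2}$ and the second involves $\mathcal{Y}(a_{i}u,z)w'$. Here I would use crucially that $N^{1}$ and $N^{2}$ are $V_{0}$-submodules, so that whenever $a\in V_{0}$ the vectors $a_{i}u$ and $a_{j}w'$ stay inside $N^{1}$ and $N^{2}$ respectively; this keeps the recursively appearing first arguments under control, and is exactly where the hypothesis that $V_{0}$ shares the Virasoro element of $V$ enters, guaranteeing that the $L(0)$-gradings agree and the filtrations are compatible.

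The hard part, and the step deserving the most care, is precisely the cross-term $\mathcal{Y}(a_{i}u,z)w'$ with $a\in V\setminus V_{0}$: then $a_{i}u$ leaves $N^{1}$, so the two filtrations (in $W^{1}$ and in $W^{2}$) become coupled, and neither a single induction on the $W^{1}$-length nor on the $W^{2}$-length closes on its own, since moving a mode of $V$ across $\mathcal{Y}$ trades one unit of length in $W^{2}$ for one unit in $W^{1}$ and leaves the total length unchanged. I expect to resolve this by a simultaneous induction on the pair of filtration degrees, grounded by the $L(0)$-grading (identical for $V$ and $V_{0}$): the lowest-weight components of $\mathcal{Y}(w^{1},z)w^{2}$ are governed by the lowest-weight subspaces sitting inside $N^{1}$ and $N^{2}$, so the coupled recursion can be anchored at the bottom of the grading and pushed upward. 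Once $N^{1}\subseteq K$ is proved, irreducibility of $W^{1}$ forces $K=W^{1}$, hence $\mathcal{Y}=0$, which proves injectivity and therefore $\dim I_{V}\left(_{W^{1}\ W^{2}}^{\ \ W^{3}}\right)\le\dim I_{V_{0}}\left(_{N^{1}\ N^{2}}^{\ \ W^{3}}\right)$.
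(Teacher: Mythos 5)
Your reduction is sound as far as it goes: $K=\{w^{1}\in W^{1}:\mathcal{Y}(w^{1},z)w^{2}=0\ \forall w^{2}\in W^{2}\}$ is indeed a $V$-submodule of $W^{1}$ by the iterate formula, exactly as you argue, so the whole problem is to show $N^{1}\subseteq K$. That is where your proposal has a genuine gap, and it is the very step you flag as "the hard part." The commutator formula produces the cross-terms $\mathcal{Y}(a_{i}u,z)w'$ with $a\in V\setminus V_{0}$, and, as you yourself note, moving a mode of $V$ across $\mathcal{Y}$ conserves the total filtration length, so no induction on those lengths can close. Your proposed rescue --- a simultaneous induction "anchored" by the $L(0)$-grading, justified by the claim that the lowest-weight components of $\mathcal{Y}(w^{1},z)w^{2}$ are governed by the lowest-weight subspaces inside $N^{1}$ and $N^{2}$ --- does not hold up: $N^{1}$ and $N^{2}$ are merely \emph{some} irreducible $V_{0}$-submodules of $W^{1}$ and $W^{2}$ and need not contain the lowest-weight subspaces of $W^{1}$, $W^{2}$ at all, so there is nothing at the bottom of the grading to anchor the recursion, and no strictly decreasing quantity is ever exhibited. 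As written, the argument is a correct frame around an unproved core.

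The missing idea (this is how the result is proved in \cite{ADL}, which the paper cites for this proposition without reproducing a proof) is to replace the commutator formula by \emph{weak commutativity}, which has no cross-terms at all. For $a\in V$ and $u\in N^{1}$ choose $N\ge0$ with $a_{n}u=0$ for $n\ge N$; multiplying the Jacobi identity for $\mathcal{Y}$ by $z_{0}^{N}$ and taking $\mathrm{Res}_{z_{0}}$ kills the $\delta$-function (iterate) term entirely and yields
\[
\left(z_{1}-z_{2}\right)^{N}Y_{W^{3}}\left(a,z_{1}\right)\mathcal{Y}\left(u,z_{2}\right)=\left(z_{1}-z_{2}\right)^{N}\mathcal{Y}\left(u,z_{2}\right)Y_{W^{2}}\left(a,z_{1}\right).
\]
Applying this to $v\in N^{2}$ and using the hypothesis $\mathcal{Y}(u,z_{2})v=0$, the left side vanishes, and since multiplication by $(z_{1}-z_{2})^{N}$ is injective on the relevant space of formal series (equivalently, in the rational-function language of Proposition \ref{extension property} extended to intertwining operators: a function vanishing in one domain vanishes identically), we get $\mathcal{Y}(u,z_{2})a_{k}v=0$ for \emph{all} $a\in V$ and $k\in\mathbb{Z}$ --- the first argument of $\mathcal{Y}$ is never contaminated. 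Iterating, $\mathcal{Y}(u,z_{2})$ annihilates the $V$-submodule of $W^{2}$ generated by $N^{2}$, which is $W^{2}$ by irreducibility; this is exactly $N^{1}\subseteq K$, and your own submodule argument (or the analogous weak-associativity step) then gives $\mathcal{Y}=0$. Note that this argument proves more than the proposition asks: vanishing of $\mathcal{Y}(u,z)v$ for a single pair of nonzero vectors already forces $\mathcal{Y}=0$; the subalgebra $V_{0}$, the equality of Virasoro elements, and the irreducibility of $N^{1},N^{2}$ are needed only to make the restriction map well defined as a map into $I_{V_{0}}\left(_{N^{1}\ N^{2}}^{\ \ W^{3}}\right)$.
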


\subsection{fusion rules of the $5A$-algebra $\mathcal{U}$}
First we need the following theorem:
\begin{theorem} (theorem 3.19 in \cite{LYY})\label{irrmod5A}
There are exactly nine irreducible modules $\mathcal{U}(i,j)$, $i,j=1, 3, 5$, for $\mathcal{U}$. As $L\left(\frac{1}{2},0\right)\otimes L\left(\frac{25}{28},0\right)\otimes L\left(\frac{25}{28},0\right)$-modules, they are of the following form:
\begin{eqnarray*}
\mathcal{U}(i,j)\cong&[0, h_{i,1}, h_{j,1}]\oplus [0, h_{i,3}, h_{j,5}]\oplus [0, h_{i,5}, h_{j,3}]\oplus [0, h_{i,7}, h_{j,7}]\\
&\oplus [\frac{1}{2}, h_{i,1}, h_{j,7}]\oplus [\frac{1}{2}, h_{i,3}, h_{j,3}]\oplus [\frac{1}{2}, h_{i,5}, h_{j,5}]\oplus [\frac{1}{2}, h_{i,7}, h_{j,1}]\\
&\oplus [\frac{1}{16}, h_{i,2}, h_{j,4}]\oplus [\frac{1}{16}, h_{i,4}, h_{j,2}]\oplus [\frac{1}{16}, h_{i,6}, h_{j,4}]\oplus [\frac{1}{16}, h_{i,4}, h_{j,6}],
\end{eqnarray*}
where $h_{m,n}=\frac{(7n-8m)^2-1}{4\cdot7\cdot8}$.
\end{theorem}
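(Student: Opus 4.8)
The plan is to obtain every irreducible $\mathcal{U}$-module by induction from the rational conformal subalgebra $V^{0}:=L\left(\frac{1}{2},0\right)\otimes L\left(\frac{25}{28},0\right)\otimes L\left(\frac{25}{28},0\right)$, whose irreducible modules are exactly the tensor products $[h_{1},h_{2},h_{3}]$ with $h_{1}\in\left\{0,\frac{1}{2},\frac{1}{16}\right\}$ and $h_{2},h_{3}$ ranging over the twenty-one highest weights $h_{m,n}$ of the $(7,8)$ minimal model. Since $\mathcal{U}$ is rational and $C_{2}$-cofinite (a property established together with its construction in \cite{LYY}), it has only finitely many irreducible modules, each restricting to a completely reducible $V^{0}$-module.

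I would then study the induction functor $N\mapsto\mathcal{U}\boxtimes_{V^{0}}N$. By Frobenius reciprocity every irreducible $\mathcal{U}$-module is a summand of some $\mathcal{U}\boxtimes_{V^{0}}N$, and
\[
\mathcal{U}\boxtimes_{V^{0}}N=\bigoplus_{k=1}^{12}U^{k}\boxtimes_{V^{0}}N ,
\]
where each term is computed slot by slot via Proposition \ref{fusion of tensor product}, using the Ising fusion rules and the $(7,8)$ minimal-model fusion rules of Theorem \ref{fusion rules of virasoro modules}. The decisive structural fact, read off from Remark \ref{A5C3pp'}, is that in both Virasoro slots every summand $U^{k}$ carries a weight $h_{1,n}$, i.e. its first Kac label is $1$: concretely $0,\frac{3}{4},\frac{13}{4},\frac{15}{2}$ are $h_{1,1},h_{1,3},h_{1,5},h_{1,7}$, and $\frac{5}{32},\frac{57}{32},\frac{165}{32}$ are $h_{1,2},h_{1,4},h_{1,6}$. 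Because fusing a module whose first Kac label is $1$ with one whose first label is $i$ forces the product to have first label exactly $i$, fusion by $\mathcal{U}$ preserves the pair $(i,j)$ of first Kac labels of the two Virasoro factors and moves only the second label and the Ising label.

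It then follows that for $N=[0,h_{i,1},h_{j,1}]$ each of the twelve products $U^{k}\boxtimes_{V^{0}}N$ is a single irreducible $V^{0}$-module (for instance $U^{3}\boxtimes_{V^{0}}N=[0,h_{i,3},h_{j,5}]$), and together they reproduce precisely the twelve-term sum displayed in the statement; this is the candidate $\mathcal{U}(i,j)$. It is irreducible over $\mathcal{U}$ since its twelve pairwise non-isomorphic $V^{0}$-constituents form one orbit under fusion by the $U^{k}$, leaving no room for a proper $\mathcal{U}$-submodule. The labels $(i,j)$ are first Kac labels taken modulo the identification $(m,n)\sim(7-m,8-n)$, under which the first label satisfies $m\sim 7-m$; the three classes have $\{1,3,5\}$ as a transversal, so there are at most nine such modules, and they are pairwise distinct because they involve disjoint sets of $V^{0}$-weights.

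The main obstacle is exhaustion: proving that these nine modules are all of them and that no induced module splits further. This requires determining exactly which irreducible $V^{0}$-modules $N$ induce to genuine untwisted $\mathcal{U}$-modules --- the locality condition, governed by the conformal weights of $\mathcal{U}\boxtimes_{V^{0}}N$ relative to $N$ lying in $\frac{1}{2}\mathbb{Z}$ --- and verifying that the surviving sectors are exactly those with first-label pair in $\{1,3,5\}^{2}$, each a single orbit of size twelve. Rather than enumerate all $3\cdot 21\cdot 21$ sectors, I would pin the count at nine by a global-dimension argument: compute $q\dim_{V^{0}}\mathcal{U}(i,j)=q\dim_{V^{0}}\mathcal{U}\cdot q\dim_{V^{0}}N$ from Remark \ref{product property of qdim} and Proposition \ref{qdim of simple current}, and check that $\sum_{i,j}\bigl(q\dim_{V^{0}}\mathcal{U}(i,j)\bigr)^{2}$ accounts for the full module category of $\mathcal{U}$; together with rationality this forces the list to be complete. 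Quantum Galois theory (Theorem \ref{classical galois theory}), applied to the grading automorphisms of the tower in Lemma \ref{U1234}, gives an independent handle on the same count.
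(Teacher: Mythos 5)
First, a point of orientation: the paper contains no proof of this statement at all. It is imported verbatim as Theorem 3.19 of \cite{LYY} (the heading says exactly that) and is used only as input for the fusion-rule computation in Section 5. So there is no argument in the paper to compare yours against; your proposal has to stand on its own. Its computational skeleton is correct and does reproduce the statement: every constituent $U^{k}$ of $\mathcal{U}$ has first Kac label $1$ in both $L\left(\frac{25}{28},0\right)$ slots, fusing with a first-label-$1$ module preserves the first label, inducing $N=[0,h_{i,1},h_{j,1}]$ slot by slot (Proposition \ref{fusion of tensor product} together with Theorem \ref{fusion rules of virasoro modules}) gives exactly the displayed twelve-term sum, and $\{1,3,5\}$ is a transversal for $m\sim 7-m$, giving nine classes.

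The genuine gap is in everything that turns this skeleton into a proof. You found the whole argument on the assertion that $\mathcal{U}$ is rational and $C_{2}$-cofinite, ``established together with its construction in \cite{LYY}''; that attribution cannot be verified from anything quoted in this paper, and it does all the heavy lifting. Irreducibility of $\mathcal{U}\boxtimes_{V^{0}}N$ from $\mathrm{End}=\mathbb{C}$ (via Frobenius reciprocity and multiplicity one of $N$) requires semisimplicity of the $\mathcal{U}$-module category, i.e. rationality --- precisely the kind of statement such a classification is normally needed to establish, so there is a real risk of circularity. Likewise your exhaustion step, the crux of the theorem, rests on the global-dimension identity for local modules over the algebra object $\mathcal{U}$ in the modular tensor category of $V^{0}$-modules (Kirillov--Ostrik, Huang--Kirillov--Lepowsky, McRae); note that $\mathcal{U}$ is \emph{not} a simple current extension of $V^{0}$ (the sectors $U^{3},U^{4}$ have quantum dimension $>1$), so none of the paper's own tools such as Remark \ref{simple current extension} apply. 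With the modern categorical machinery invoked explicitly the approach is viable, but then you must actually carry out the global-dimension computation you only gesture at; as written, completeness of the list of nine is simply not proved.

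Three further concrete defects. (i) Your locality criterion is wrong: since each $U^{k}$ has integer conformal weight, the induced module is an untwisted $\mathcal{U}$-module iff $h_{W}-h_{N}\in\mathbb{Z}$ for every irreducible summand $W$ of every $U^{k}\boxtimes_{V^{0}}N$; allowing $\frac{1}{2}\mathbb{Z}$ admits $\mathbb{Z}_{2}$-twisted sectors and would inflate the count beyond nine. (ii) The claimed ``independent handle'' from quantum Galois theory is a non sequitur: Theorem \ref{classical galois theory} classifies the irreducible $V^{G}$-modules occurring \emph{inside} $V$, not the modules of an overalgebra, and the tower of Lemma \ref{U1234} only supplies $\mathbb{Z}_{2}$'s, which are blind to the non-simple-current sectors. (iii) Your orbit argument for irreducibility silently assumes that the structure maps sending one $V^{0}$-constituent to another are nonzero; nonvanishing of exactly such intertwining-operator coefficients is what this paper has to fight for with braiding matrices (Lemma \ref{A5nonzerolemma}), so it cannot be waved through without either that kind of argument or the semisimplicity you have not justified.
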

Now we can state our theorem:
\begin{theorem}
 $\dim I_{\mathcal{U}}\left(_{\mathcal{U}(i,j) \ \mathcal{U}(i',j')}^{\ \ \ \mathcal{U}(i'',j'')}\right)=1$ iff both $\left(\left(i,1\right),\left(i',1\right),\left(i'',1\right)\right)$ and $\left(\left(j,1\right),\left(j',1\right),\left(j'',1\right)\right)$
are admissible triples of pairs for $p=7, q=8$ (see definition \ref{admissiblepair})  and $0$ otherwise.
\end{theorem}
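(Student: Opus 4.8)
The plan is to squeeze $\dim I_{\mathcal{U}}\left(_{\mathcal{U}(i,j)\ \mathcal{U}(i',j')}^{\ \ \ \mathcal{U}(i'',j'')}\right)$ between an upper and a lower bound and show the two agree. Set $V_{0}=L\left(\frac{1}{2},0\right)\otimes L\left(\frac{25}{28},0\right)\otimes L\left(\frac{25}{28},0\right)$, a rational subVOA of $\mathcal{U}$ with the same conformal vector. By Theorem \ref{irrmod5A} each $\mathcal{U}(i,j)$ is a direct sum of twelve irreducible $V_{0}$-modules $[h_{1},h_{2},h_{3}]$ in which the middle tensor factor always carries first Kac label $i$ and the last factor first Kac label $j$. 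For the upper bound I would apply Proposition \ref{restriction of fusion rules} with $N^{1}=[0,h_{i,1},h_{j,1}]\subset\mathcal{U}(i,j)$ and $N^{2}=[0,h_{i',1},h_{j',1}]\subset\mathcal{U}(i',j')$, obtaining
\[
\dim I_{\mathcal{U}}\left(_{\mathcal{U}(i,j)\ \mathcal{U}(i',j')}^{\ \ \ \mathcal{U}(i'',j'')}\right)\le\sum_{W}\dim I_{V_{0}}\left(_{N^{1}\ N^{2}}^{\ \ \ W}\right),
\]
the sum running over the twelve $V_{0}$-summands $W$ of $\mathcal{U}(i'',j'')$.

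I would then compute the right-hand side using Proposition \ref{fusion of tensor product} and Theorem \ref{fusion rules of virasoro modules}. Because $L\left(\frac{1}{2},0\right)\boxtimes L\left(\frac{1}{2},0\right)=L\left(\frac{1}{2},0\right)$, and because admissibility of a triple $((a,n),(i,1),(i',1))$ for $p=7,q=8$ forces $n=1$, the Virasoro products $L\left(\frac{25}{28},h_{i,1}\right)\boxtimes L\left(\frac{25}{28},h_{i',1}\right)$ and $L\left(\frac{25}{28},h_{j,1}\right)\boxtimes L\left(\frac{25}{28},h_{j',1}\right)$ contain only modules with second Kac label $1$. Hence $N^{1}\boxtimes_{V_{0}}N^{2}$ is a sum of modules of the form $[0,h_{a,1},h_{b,1}]$, and the single summand of $\mathcal{U}(i'',j'')$ of this shape is $[0,h_{i'',1},h_{j'',1}]$. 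The bound therefore collapses to $\dim I_{\mathcal{U}}(\cdots)\le N_{(i,1),(i',1)}^{(i'',1)}\,N_{(j,1),(j',1)}^{(j'',1)}$, which equals $1$ exactly when both triples are admissible and $0$ otherwise; in particular the fusion rule vanishes whenever either triple fails admissibility.

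For the lower bound I would promote these inequalities to equalities by a quantum-dimension count. By the factorization of the $S$-matrix of the minimal model $L\left(\frac{25}{28},0\right)$ one has $q\dim_{L(25/28,0)}L\left(\frac{25}{28},h_{m,n}\right)=f(m)g(n)$ for positive functions $f,g$ of the first and second Kac labels, with $f(1)=g(1)=1$. Since the profile of Ising weights and of the second Kac labels over the twelve $V_{0}$-summands of $\mathcal{U}(i,j)$ is the same for every $(i,j)$, Remark \ref{product property of qdim} gives $q\dim_{V_{0}}\mathcal{U}(i,j)=f(i)f(j)\,C$ with a universal constant $C$, and dividing by the $(1,1)$ case yields $q\dim_{\mathcal{U}}\mathcal{U}(i,j)=f(i)f(j)$. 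Write $a_{i'',j''}$ for the true fusion numbers and $b_{i'',j''}=N_{(i,1),(i',1)}^{(i'',1)}N_{(j,1),(j',1)}^{(j'',1)}$ for the upper bounds. Applying Proposition \ref{qdim of simple current}(i) to $\mathcal{U}$ gives $\sum_{i'',j''}a_{i'',j''}f(i'')f(j'')=f(i)f(j)f(i')f(j')$, while applying it to $L\left(\frac{25}{28},0\right)$ gives $\sum_{i''}N_{(i,1),(i',1)}^{(i'',1)}f(i'')=f(i)f(i')$, so that $\sum_{i'',j''}b_{i'',j''}f(i'')f(j'')$ equals the same product. As $0\le a_{i'',j''}\le b_{i'',j''}$ and each $f(i'')f(j'')>0$, equality of the two sums forces $a_{i'',j''}=b_{i'',j''}$ for all $i'',j''$, which is precisely the assertion.

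The step I expect to be the main obstacle is verifying the hypotheses that let Proposition \ref{qdim of simple current}(i) apply to $\mathcal{U}$ itself, namely its rationality, $C_{2}$-cofiniteness, self-duality and the positivity of the conformal weights of the eight nontrivial modules $\mathcal{U}(i,j)$, together with confirming cleanly that the Ising-and-second-label profile of the twelve summands is genuinely independent of $(i,j)$ so that $f(i)f(j)$ factors out. The remaining computations are finite and rest only on the Virasoro fusion rules and the $S$-matrix factorization.
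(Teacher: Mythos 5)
Your proposal is correct and follows essentially the same route as the paper's proof: Proposition \ref{restriction of fusion rules} gives the upper bound, Proposition \ref{fusion of tensor product} together with Theorem \ref{fusion rules of virasoro modules} identifies that bound with $N_{(i,1),(i',1)}^{(i'',1)}\,N_{(j,1),(j',1)}^{(j'',1)}$, and the computation $q\dim_{\mathcal{U}}\mathcal{U}(i,j)=q\dim_{[0,0,0]}[0,h_{i,1},h_{j,1}]$ forces equality. The only difference is expository: your quantum-dimension squeeze (summing $a_{i'',j''}$ against the positive weights $f(i'')f(j'')$ and comparing with the Virasoro identity) spells out in detail the step the paper asserts in a single line.
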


\begin{proof}
theorem \ref{restriction of fusion rules} implies the following inequality:
\[
\dim I_{\mathcal{U}}\left(_{\mathcal{U}(i,j) \ \mathcal{U}(i',j')}^{\ \ \ \mathcal{U}(i'',j'')}\right)\le \dim I_{[0, 0, 0]}\left(_{[0, h_{i,1}, h_{j,1}] \ [0, h_{i',1}, h_{j',1}]}^{\ \ \ \ \ \ \ \ \ \ \mathcal{U}(i'',j'')}\right)=\dim I_{[0, 0, 0]}\left(_{[0, h_{i,1}, h_{j,1}] \ [0, h_{i',1}, h_{j',1}]}^{\ \ \ \ \ \ \ \ [0, h_{i'',1}, h_{j'',1}]}\right).
\]
On the other hand, by directly computation, we have
\[
 q\dim_{\mathcal{U}}\mathcal{U}(i,j)=q\dim_{[0, 0, 0]}[0, h_{i,1}, h_{j,1}]= \frac{\sin(\frac{8i\pi}{7})}{\sin(\frac{8\pi}{7})}\cdot\frac{\sin(\frac{8j\pi}{7})}{\sin(\frac{8\pi}{7})}.
\]
So we have
\begin{eqnarray*}
&\dim I_{\mathcal{U}}\left(_{\mathcal{U}(i,j) \ \mathcal{U}(i',j')}^{\ \ \ \mathcal{U}(i'',j'')}\right)= \dim I_{[0, 0, 0]}\left(_{[0, h_{i,1}, h_{j,1}] \ [0, h_{i',1}, h_{j',1}]}^{\ \ \ \ \ \ \ \ \ \ \mathcal{U}(i'',j'')}\right)\\
&=\dim I_{L\left(\frac{25}{28},0\right)}\left(_{L\left(\frac{25}{28},h_{i,1}\right) \ L\left(\frac{25}{28},h_{i',1}\right)}^{\ \ \ \ \ \ \ \ L\left(\frac{25}{28},h_{i'',1}\right)}\right)\cdot\dim I_{L\left(\frac{25}{28},0\right)}\left(_{L\left(\frac{25}{28},h_{j,1}\right) \ L\left(\frac{25}{28},h_{j',1}\right)}^{\ \ \ \ \ \ \ \ L\left(\frac{25}{28},h_{j'',1}\right)}\right).
\end{eqnarray*}
Then we can conclude our theorem by using theorem \ref{fusion rules of virasoro modules}.
\end{proof}

\subsection{fusion rules of the $3C$-algebra $\mathcal{U}$}
First we need the following theorem:
\begin{theorem} (theorem 3.38 in \cite{LYY})\label{irrmod3C}
There are exactly five irreducible $\mathcal{U}$-modules $\mathcal{U}(2k)$, $0\le k\le 4$. In fact, $\mathcal{U}(0)=\mathcal{U}$ and as $L\left(\frac{1}{2},0\right)\otimes L\left(\frac{21}{22},0\right)$-modules,

\begin{eqnarray*}
\mathcal{U}(2)\cong[0, \frac{13}{11}]\oplus [0, \frac{35}{11}]\oplus [\frac{1}{2}, \frac{15}{22}]\oplus [\frac{1}{2}, \frac{301}{22}]\oplus [\frac{1}{16}, \frac{21}{176}]\oplus [\frac{1}{16}, \frac{901}{176}]\\
\mathcal{U}(4)\cong[0, \frac{6}{11}]\oplus [0, \frac{50}{11}]\oplus [\frac{1}{2}, \frac{1}{22}]\oplus [\frac{1}{2}, \frac{155}{22}]\oplus [\frac{1}{16}, \frac{85}{176}]\oplus [\frac{1}{16}, \frac{261}{176}]\\
\mathcal{U}(6)\cong[0, \frac{1}{11}]\oplus [0, \frac{111}{11}]\oplus [\frac{1}{2}, \frac{35}{22}]\oplus [\frac{1}{2}, \frac{57}{22}]\oplus [\frac{1}{16}, \frac{5}{176}]\oplus [\frac{1}{16}, \frac{533}{176}]\\
\mathcal{U}(8)\cong[0, \frac{20}{11}]\oplus [0, \frac{196}{11}]\oplus [\frac{1}{2}, \frac{7}{22}]\oplus [\frac{1}{2}, \frac{117}{22}]\oplus [\frac{1}{16}, \frac{133}{176}]\oplus [\frac{1}{16}, \frac{1365}{176}].
\end{eqnarray*}
\end{theorem}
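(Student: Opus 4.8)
The plan is to treat $\mathcal{U}$ as an extension of the rational, $C_{2}$-cofinite subVOA $V^{0}=L(\ha,0)\otimes L(\frac{21}{22},0)=U^{1}$, whose irreducible modules are precisely the tensor products $L(\ha,h)\otimes L(\frac{21}{22},h')$ with $h\in\{0,\ha,\frac1{16}\}$ and $h'$ a highest weight of the $(11,12)$ minimal model, and whose fusion rules are fully determined by Theorem \ref{fusion rules of virasoro modules} together with Proposition \ref{fusion of tensor product}. Since $\mathcal{U}$ is obtained from $V^{0}$ by the chain of extensions exhibited in Lemma \ref{U12} (one $\mathbb{Z}_2$ extension into $W=U^{1}+U^{2}$, then the two simple current extensions into $U^{1}+\dots+U^{4}$ and into $\mathcal{U}$), it is itself rational and $C_{2}$-cofinite, so it has only finitely many irreducible modules and each restricts to a finite direct sum of irreducible $V^{0}$-modules. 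The whole argument then reduces to deciding which direct sums of irreducible $V^{0}$-modules assemble into an irreducible $\mathcal{U}$-module.

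First I would construct the five candidates $\mathcal{U}(2k)$, $0\le k\le 4$, by induction: for a suitable irreducible $V^{0}$-module $N_{k}$ I form $\mathcal{U}\boxtimes_{V^{0}}N_{k}$ and, using the explicit minimal-model fusion rules, check that it is stable under the graded pieces of $\mathcal{U}$ and that as a $V^{0}$-module it is exactly the six-term sum written in the statement. Taking $N_{0}=V^{0}$ returns $\mathcal{U}(0)=\mathcal{U}$, and the remaining $N_{k}$ are read off from the admissible shifts in the $L(\frac{21}{22},\cdot)$ factor, whose weights are recorded via Remark \ref{A5C3pp'}. That each candidate is genuinely irreducible over $\mathcal{U}$ and that the five are pairwise inequivalent follows from the disjointness of their $V^{0}$-socles together with Proposition \ref{restriction of fusion rules} and the simple current criterion of Proposition \ref{qdim of simple current}; the quantum dimensions $q\dim_{\mathcal{U}}\mathcal{U}(2k)$ used to organize the count are computed factorwise as in Remark \ref{product property of qdim}, and should reduce to the $V^{0}$-quantum dimension of the lowest summand, exactly as in the $5A$ computation following Theorem \ref{irrmod5A}.

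The decisive step is completeness, namely that no further irreducible $\mathcal{U}$-module exists. The key point is that only those irreducible $V^{0}$-modules $N$ whose monodromy with every graded piece of $\mathcal{U}$ is trivial---equivalently, those for which the relevant conformal-weight differences lie in $\mathbb{Z}$---can occur inside an ordinary $\mathcal{U}$-module; the remaining $V^{0}$-modules contribute only to twisted sectors. I would make this integrality condition explicit on the pair $(h,h')$ from the minimal-model weights, verify that exactly the $V^{0}$-modules appearing in the five candidates satisfy it, and check that these fall into precisely five orbits under fusion by the graded pieces of $\mathcal{U}$. Since $\mathcal{U}$ is rational, the total quantum-dimension bookkeeping (via Theorem \ref{quantum dimension and orbifold module} and Proposition \ref{qdim of simple current}, with Theorem \ref{classical galois theory} controlling the orbifold by the extension automorphisms) then forces every irreducible $\mathcal{U}$-module to coincide with one of the five.

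The main obstacle is precisely this orbit-and-monodromy bookkeeping, and it is delicate because the extension is not simple current in each tensor factor: the Ising sector $L(\ha,\frac1{16})$ has quantum dimension $\sqrt2$ and satisfies $L(\ha,\frac1{16})\boxtimes L(\ha,\frac1{16})=L(\ha,0)+L(\ha,\frac12)$, while $L(\frac{21}{22},8)$ is likewise non-invertible by the computation underlying Lemma \ref{U12}. Consequently the orbit and stabilizer analysis cannot be carried out factor by factor but must be done for the full extension, and one must rule out fixed points that would cause an induced module to split and spuriously inflate the count beyond five. Verifying the monodromy condition and the orbit structure against the explicit $(11,12)$ minimal-model weight data is where the real work lies.
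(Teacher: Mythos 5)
Before anything else, note what you are comparing against: this paper does \emph{not} prove Theorem \ref{irrmod3C}. The statement is imported verbatim from \cite{LYY} (Theorem 3.38 there) and is used only as an input to the fusion-rule computation in Section 5, so your proposal is really an attempted new proof of a Lam--Yamada--Yamauchi result, and it must stand on its own. It does not, for two concrete reasons. The most serious is the completeness step. You invoke Theorem \ref{classical galois theory} and Theorem \ref{quantum dimension and orbifold module} to have ``the orbifold by the extension automorphisms'' control the count, but $U^{1}=L\left(\ha,0\right)\otimes L\left(\frac{21}{22},0\right)$ is not the fixed-point subalgebra of $\mathcal{U}$, nor of $W=U^{1}\oplus U^{2}$, under any finite automorphism group: if it were, Theorem \ref{quantum dimension and orbifold module} would force $q\dim_{U^{1}}U^{2}$ to be a positive integer, whereas $q\dim_{U^{1}}U^{2}=\sin\left(\frac{5\pi}{12}\right)/\sin\left(\frac{\pi}{12}\right)=2+\sqrt{3}$. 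This is precisely the argument this paper runs (in contrapositive form) in the third Claim after Lemma \ref{lemmaunique3C} to prove $\lambda_{2,2}^{2}\neq 0$; that same nonvanishing shows $U^{2}\cdot U^{2}$ meets both $U^{1}$ and $U^{2}$, so $U^{1}\subset W$ is not even $\mathbb{Z}_{2}$-graded, contradicting your description of it as a ``$\mathbb{Z}_{2}$ extension.'' Galois descent reaches only down to $W$ via the Miyamoto involutions; it cannot see the step $U^{1}\subset W$, which is exactly the non-simple-current step where, as you admit, ``the real work lies.'' The same defect undermines your rationality and $C_{2}$-cofiniteness claim: the steps $W\subset U^{1}+\cdots+U^{4}\subset\mathcal{U}$ are simple current extensions, but $U^{1}\subset W$ is not (again because $q\dim_{U^{1}}U^{2}\neq 1$), so neither Remark \ref{simple current extension} nor any cited result gives rationality of $W$, and without that the finiteness of the module list and the induced-module construction $\mathcal{U}\boxtimes_{V^{0}}N_{k}$ have no foundation within the toolkit you reference.

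Second, your occurrence criterion --- ``monodromy trivial, equivalently conformal-weight differences in $\mathbb{Z}$'' --- is too loose to carry the exhaustion argument. For non-invertible graded pieces such as $U^{2}$, $U^{5}$, $U^{6}$, the fusion $U^{i}\boxtimes N$ has several irreducible channels $C$, and triviality of monodromy requires $h_{C}-h_{U^{i}}-h_{N}\in\mathbb{Z}$ for \emph{every} channel; moreover, what an ordinary irreducible $\mathcal{U}$-module forces a priori on a constituent $N$ is integrality only in the channels that actually appear in that module, so locality of each constituent is not automatic and needs an argument (this is where one would need Kirillov--Ostrik-type results on induced and local modules for non-simple-current extensions --- machinery supplied neither by this paper nor by your sketch). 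Since you explicitly defer both the monodromy verification and the orbit/fixed-point analysis to future work, what you have is a plausible strategy in the algebra-object formalism, not a proof: the deferred verification \emph{is} the theorem.
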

Now we can state our theorem:
\begin{theorem}
 $\dim I_{\mathcal{U}(0)}\left(_{\mathcal{U}(i) \ \mathcal{U}(j)}^{\ \ \mathcal{U}(k)}\right)=1$ iff $\left(\left(i+1,1\right),\left(j+1,1\right),\left(k+1,1\right)\right)$
is an admissible triple of pairs for $p=11, q=12$ (see definition \ref{admissiblepair})  and $0$ otherwise.
\end{theorem}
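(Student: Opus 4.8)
The plan is to mirror the proof of the $5A$ fusion-rule theorem, replacing the two $L\!\left(\frac{25}{28},0\right)$ factors by the single factor $L\!\left(\frac{21}{22},0\right)$ and feeding in the minimal-model data for $p=11$, $q=12$. Write $V_{0}=L\!\left(\frac{1}{2},0\right)\otimes L\!\left(\frac{21}{22},0\right)$ for the subVOA $U^{1}$ of $\mathcal{U}$, and record that $h_{m,1}=\frac{(11-12m)^{2}-1}{528}$ gives $h_{1,1}=0$, $h_{3,1}=\frac{13}{11}$, $h_{5,1}=\frac{50}{11}$, $h_{7,1}=\frac{111}{11}$, $h_{9,1}=\frac{196}{11}$. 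Comparing with Theorem \ref{irrmod3C}, for each $k\in\{0,2,4,6,8\}$ the module $\mathcal{U}(k)$ contains the irreducible $V_{0}$-summand $[0,h_{k+1,1}]=L\!\left(\frac{1}{2},0\right)\otimes L\!\left(\frac{21}{22},h_{k+1,1}\right)$, and a short arithmetic check shows that the \emph{other} $[0,\cdot]$ summand of each $\mathcal{U}(k)$ is not of $h_{m,1}$-type (e.g. the $[0,\frac{35}{11}]$ piece of $\mathcal{U}(2)$ is $L\!\left(\frac{21}{22},h_{8,5}\right)$).

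First I would apply Proposition \ref{restriction of fusion rules}, restricting $\mathcal{U}$ to $V_{0}$ and choosing the irreducible $V_{0}$-submodules $[0,h_{i+1,1}]\subset\mathcal{U}(i)$ and $[0,h_{j+1,1}]\subset\mathcal{U}(j)$, which yields
\[
\dim I_{\mathcal{U}}\left(_{\mathcal{U}(i)\ \mathcal{U}(j)}^{\ \ \ \mathcal{U}(k)}\right)\le \dim I_{V_{0}}\left(_{[0,h_{i+1,1}]\ [0,h_{j+1,1}]}^{\ \ \ \ \ \ \ \ \ \mathcal{U}(k)}\right).
\]
Next I would evaluate the right-hand side by the tensor factorization of Proposition \ref{fusion of tensor product}. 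Since both chosen submodules have first tensor factor $L\!\left(\frac{1}{2},0\right)$ and $L\!\left(\frac{1}{2},0\right)\boxtimes L\!\left(\frac{1}{2},0\right)=L\!\left(\frac{1}{2},0\right)$, any nonzero intertwining operator must land in a $[0,\cdot]$ summand; moreover in the second-factor fusion the admissibility condition on the minimal-model label $n$ (with $n'=n''=1$) forces the output label to be $n=1$, so only $h_{m,1}$-type modules occur. By the identification in the previous paragraph the only such summand of $\mathcal{U}(k)$ is $[0,h_{k+1,1}]$, whence the right-hand side collapses to $\dim I_{L\left(\frac{21}{22},0\right)}\left(_{L(\frac{21}{22},h_{i+1,1})\ L(\frac{21}{22},h_{j+1,1})}^{\ \ \ \ \ \ \ \ \ \ \ L(\frac{21}{22},h_{k+1,1})}\right)=N_{(i+1,1),(j+1,1)}^{(k+1,1)}$, which by Theorem \ref{fusion rules of virasoro modules} is $1$ when $((k+1,1),(i+1,1),(j+1,1))$ is admissible and $0$ otherwise.

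To upgrade this inequality to an equality I would use quantum dimensions exactly as in the $5A$ case. A direct computation (via Remark \ref{product property of qdim} and the minimal-model quantum dimensions) gives $q\dim_{\mathcal{U}}\mathcal{U}(i)=q\dim_{L\left(\frac{21}{22},0\right)}L\!\left(\frac{21}{22},h_{i+1,1}\right)$ for every $i$. Multiplicativity and additivity of quantum dimension (Proposition \ref{qdim of simple current}(i)) then give the two identities
\[
q\dim_{\mathcal{U}}\mathcal{U}(i)\cdot q\dim_{\mathcal{U}}\mathcal{U}(j)=\sum_{k}\dim I_{\mathcal{U}}\left(_{\mathcal{U}(i)\ \mathcal{U}(j)}^{\ \ \ \mathcal{U}(k)}\right) q\dim_{\mathcal{U}}\mathcal{U}(k)
\]
and the analogous Virasoro identity with coefficients $N_{(i+1,1),(j+1,1)}^{(k+1,1)}$. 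These share the same left-hand side and the same positive quantum-dimension weights $q\dim_{\mathcal{U}}\mathcal{U}(k)$, so the term-by-term bound of the second paragraph can hold only if it is saturated for every $k$. Hence $\dim I_{\mathcal{U}}\left(_{\mathcal{U}(i)\ \mathcal{U}(j)}^{\ \ \ \mathcal{U}(k)}\right)=N_{(i+1,1),(j+1,1)}^{(k+1,1)}\in\{0,1\}$, which is exactly the asserted criterion.

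The routine but load-bearing steps are the arithmetic verification that $[0,h_{k+1,1}]$ is the unique $h_{m,1}$-type $V_{0}$-summand of $\mathcal{U}(k)$ and the evaluation of $q\dim_{\mathcal{U}}\mathcal{U}(i)$. The genuinely decisive step, and the one I expect to require the most care to state cleanly, is the passage from the restriction inequality to equality: it rests solely on the positivity and multiplicativity of quantum dimensions rather than on any further representation-theoretic construction, so the whole argument reduces to the $5A$ template with the minimal-model parameters $(p,q)=(11,12)$.
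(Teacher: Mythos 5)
Your proposal is correct and follows essentially the same route as the paper's proof: the restriction inequality from Proposition \ref{restriction of fusion rules}, the identification of $[0,h_{k+1,1}]$ as the only relevant $V_{0}$-summand of $\mathcal{U}(k)$, the quantum-dimension computation, and Theorem \ref{fusion rules of virasoro modules} to conclude. The only difference is that you spell out explicitly the saturation argument (matching the two weighted quantum-dimension identities term by term) that the paper compresses into its ``So we have'' step.
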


\begin{proof}
Let $h_{m,n}=\frac{(11n-12m)^2-1}{4\cdot11\cdot12}$. Then for the irreducible $ L\left(\frac{21}{22},0\right)$-module $ L\left(\frac{21}{22}, h_{m,n}\right)$, $h_{m,n}=0, \frac{13}{11}, \frac{50}{11}, \frac{111}{11}, \frac{196}{11}$ correspond to $(m,n)=(1,1), (3,1), (5,1), (7,1), (9,1)$ respectively.
If we use the pair $(m,n)$ to denote the irreducible $L\left(\frac{1}{2},0\right)\otimes L\left(\frac{21}{22},0\right)$-module $[0, h_{m,n}]$, then by theorem
\ref{restriction of fusion rules}, we have
\[
\dim I_{\mathcal{U}(0)}\left(_{\mathcal{U}(i) \ \mathcal{U}(j)}^{\ \ \mathcal{U}(k)}\right)\le \dim I_{L\left(\frac{1}{2},0\right)\otimes L\left(\frac{21}{22},0\right)}\left(_{[0, h_{i+1,1}] \ [0, h_{j+1,1}]}^{\ \ \ \ \ \ \ \ \ \ \mathcal{U}(k)}\right)=\dim I_{L\left(\frac{1}{2},0\right)\otimes L\left(\frac{21}{22},0\right)}\left(_{[0, h_{i+1,1}] \ [0, h_{j+1,1}]}^{\ \ \ \ \ \ \ \ [0, h_{k+1,1}]}\right).
\]
On the other hand, by directly computation, we have
\[
 q\dim_{\mathcal{U}(0)}\mathcal{U}(i)=q\dim_{L\left(\frac{1}{2},0\right)\otimes L\left(\frac{21}{22},0\right)}[0, h_{i+1,1}]=\frac{\sin(\frac{(i+1)\pi}{11})}{\sin(\frac{\pi}{11})}.
\]
So we have
\[
\dim I_{\mathcal{U}(0)}\left(_{\mathcal{U}(i) \ \mathcal{U}(j)}^{\ \ \mathcal{U}(k)}\right)=\dim I_{L\left(\frac{1}{2},0\right)\otimes L\left(\frac{21}{22},0\right)}\left(_{[0, h_{i+1,1}] \ [0, h_{j+1,1}]}^{\ \ \ \ \ \ \ \ [0, h_{k+1,1}]}\right)=\dim I_{L\left(\frac{21}{22},0\right)}\left(_{L\left(\frac{21}{22},h_{i+1,1}\right) \ L\left(\frac{21}{22},h_{j+1,1}\right)}^{\ \ \ \ \ \ \ \ L\left(\frac{21}{22},h_{k+1,1}\right)}\right).
\].
Then we can conclude our theorem by using theorem \ref{fusion rules of virasoro modules}.
\end{proof}

\section*{Acknowledgments}
The author Wen Zheng thanks Xiangyu, Jiao for her discussions, helpful comments and the help in Latex.

\end{document}